\documentclass[11pt]{article}
\usepackage{arxiv}
\renewcommand{\shorttitle}{}
\renewcommand{\headeright}{DFFL Under Heterogeneity}
\renewcommand{\undertitle}{}
\RequirePackage{newtxtext}
\RequirePackage{newtxmath}
\RequirePackage[T1]{fontenc}
\RequirePackage{bm}

\usepackage{amsmath,mathtools}
\usepackage{breqn}
\allowdisplaybreaks

\usepackage{amsthm}
\usepackage{enumitem}
\usepackage{comment}
\usepackage{algorithm}
\usepackage{algpseudocode}
\usepackage{tikz}
\usepackage{rotating}
\usepackage{graphicx}
\usepackage[table]{xcolor}
\usepackage{caption}
\usepackage{subcaption}
\usepackage{booktabs}
\usepackage{tabularx}
\usepackage{longtable}
\usepackage{authblk}
\usepackage{array}
\usepackage[section]{placeins}
\usepackage[hidelinks]{hyperref}
\usepackage[nameinlink,capitalize]{cleveref}
\usepackage{bookmark}
\newtheorem{theorem}{Theorem}
\newtheorem{lemma}{Lemma}
\newtheorem{proposition}{Proposition}
\newtheorem{corollary}{Corollary}
\newtheorem{assumption}{Assumption}
\theoremstyle{definition}
\newtheorem{definition}{Definition}
\newtheorem{example}{Example}
\theoremstyle{remark}
\newtheorem{remark}{Remark}

\crefname{theorem}{Theorem}{Theorems}
\crefname{lemma}{Lemma}{Lemmas}
\crefname{proposition}{Proposition}{Propositions}
\crefname{corollary}{Corollary}{Corollaries}
\crefname{example}{Example}{Examples}
\crefname{remark}{Remark}{Remarks}
\crefname{definition}{Definition}{Definitions}
\crefname{assumption}{Assumption}{Assumptions}
\usepackage{setspace}
\DeclareMathOperator*{\argmin}{\arg\!\min}
\newcommand{\Same}{\textcolor{gray}{\footnotesize [FedAvg]}}
\newcommand{\Diff}{\textcolor{blue}{\footnotesize [DFFL]}}
\newcolumntype{L}{>{\raggedright\arraybackslash}X}
\usepackage{natbib}
\bibpunct[, ]{(}{)}{,}{a}{}{,}%
\title{Decision-Focused Federated Learning Under Heterogeneous Objectives and Constraints}
\author[1]{Konstantinos Ziliaskopoulos}
\author[1]{Alexander Vinel}
\affil[1]{Auburn University}
\affil[ ]{\texttt{\{kzz0034, alexander.vinel\}@auburn.edu}}
\date{April 17, 2026}

\begin{document}

\maketitle

\begin{abstract}
We consider Decision-Focused Federated Learning (DFFL), a predict–then–optimize setting in which multiple clients collaboratively train predictive models for downstream linear optimization problems without exchanging raw data. Besides the data heterogeneity typical of standard federated learning, clients may also have different objective functions and feasible regions. Building on the SPO+ surrogate loss, we derive heterogeneity bounds that separate objective shift, measured through cost-vector distances, from feasible-set shift, measured through support-function and shape-distance terms. We show that, for general compact feasible sets, small objective perturbations can still induce nonvanishing decision-focused loss discrepancies, while strongly convex feasible regions yield sharper stability-based bounds. We then lift these pointwise bounds to a local-versus-federated excess-risk comparison, showing that federation is beneficial when the statistical advantage of pooling exceeds a client-specific heterogeneity penalty. Computational experiments on polyhedral and strongly convex problems confirm that federation is substantially more robust under strongly convex feasible regions. Finally, we evaluate a simple validation-based interpolation between local and federated DFFL models. This interpolation mitigates the theoretical tradeoff and reduces aggregate regret and worst-client harm in both synthetic experiments and a PJM energy-pricing case study.
\end{abstract}

\section{Introduction}
\label{sec:intro}

\emph{Predict--then--optimize} is a natural approach often used in modern operations problems: a predictive model maps observable context to parameters of a downstream optimization problem, and an optimization algorithm determines the decision \citep{mandi2024decision}. Specifically, given context, i.e., features or covariates, $x$ (e.g., customer, market, or system state), a model produces a cost vector $\hat c=\hat c(x)$ that is then employed in a decision problem of the form $\min_{w\in S} c^\top w$, where $w$ represents the decision (e.g., allocation, schedule, portfolio weights, order quantities) and $S$ is the feasible region \citep{ban2019big}. In such settings, improving prediction accuracy (say, mean squared error, or MSE) does not necessarily improve the quality of the {decision} induced by the prediction $\hat c$, because the optimization layer can amplify or dampen the effect of the error. \emph{Decision-focused learning (DFL)} addresses this mismatch by training models using loss functions that reflect the downstream decision quality. In particular, the Smart Predict-then-Optimize (SPO) framework and its surrogate loss function, SPO+, are the most prominent existing DFL approaches \citep{elmachtoub2022smart}.

At the same time, the data needed to learn these predictive models are often distributed across organizations or locations that cannot pool raw observations due to privacy, regulatory, or proprietary constraints. \emph{Federated learning (FL)} offers a practical mechanism for collaborative training by exchanging model updates rather than data \citep{kairouz2021advances}. Data heterogeneity is a well-known challenge in FL. In an operations context, though, FL faces another critical obstacle: \emph{clients can be heterogeneous in their data distributions and in their downstream optimization problems}. In cross-silo applications, different sites can face distinct objective distributions (e.g., different cost structures or demand profiles) and distinct feasible regions (e.g., local capacity, policy, or regulatory constraints). For example, in healthcare, treatment initiation decision options have been shown to vary across institutions \citep{ayer2019prioritizing}. In utilities, energy prices are localized due to marginal pricing and demand \citep{nerc0114cip}. In supply chains, many decision-makers are interested in downstream demand and have varying constraints on budget and resource allocation. A single global DFL model therefore trades off two forces: pooling data reduces statistical error, but optimization heterogeneity can induce {objective} and {constraint misalignment}, potentially harming decision quality for some/all clients.

This paper studies \emph{Decision Focused Federated Learning (DFFL)} through the lens of SPO+ risk to determine the impact of client heterogeneity on the federated model performance. In particular, we focus on deriving bounds that (i) quantify how decision-focused losses change when the cost vector distribution shifts across clients (objective shift), (ii) quantify how losses change when feasible sets differ (feasible-set shift), and (iii) translate these heterogeneity effects into a comparison between local and federated guarantees. Existing efforts in federated learning predominantly focus on predictive losses and treat heterogeneity as a distribution shift in data only, and generally do not directly capture the additional sensitivity created by an optimization layer, nor do they fully account for heterogeneous feasibility constraints. Conversely, decision-focused learning algorithms are developed for a single decision problem (a single pair of $c,S$) and do not directly explain the effects of federating across clients with different downstream problems.

We develop bounds on SPO+ loss heterogeneity induced by objective and feasible-set perturbations by exploiting a support-function representation of the downstream optimization problem. This viewpoint lets us treat objective and feasible-set perturbations in a unified way: objective shifts are represented by the distance between cost vectors, and feasible-set shifts are quantified by the shape distances between feasible regions. We then lift these point-wise heterogeneity bounds to the federated setting by defining a \emph{client--mixture discrepancy} term that measures how far a client's population risk can deviate from the pooled (mixture) risk. Combining this heterogeneity penalty with standard uniform convergence results yields a simple federation-gain condition: federation is likely to be helpful for a client when the bound on heterogeneity penalty is smaller than the statistical advantage of training on pooled data. This comparison also motivates interpolating between local and federated models, based on decision loss, to accommodate a heterogeneous federation.

To the best of our knowledge, there is no existing literature on decision-focused federated learning. Therefore, our main goal is to bridge the two fields, introduce the DFFL modeling framework, and characterize the main heterogeneity axes that affect it. The main contribution of the paper is as follows.
\begin{itemize}
  \item \textbf{Heterogeneity bounds for SPO+ under objective and feasible-set shift.} For the general case, we decompose the point-wise SPO+ loss difference and provide the worst-case bounds on optimization heterogeneity.
  
  \item \textbf{Sharper bounds under strongly convex feasible sets via optimizer stability.}
  For strongly convex feasible regions, we use curvature to control the stability of support points (and hence the optimizer mapping). This yields tighter bounds than is possible for general polyhedral feasible sets.

  \item \textbf{Federation-gain guarantees that isolate a heterogeneity penalty.}
  We introduce a client--mixture discrepancy quantity that captures how misaligned a client's decision-focused risk is with the pooled objective and show how to bound it from above using the proposed heterogeneity bounds. We then derive local-versus-federated excess-risk bounds for empirical risk minimization (ERM) under SPO+.


  \item \textbf{Computational evidence and decision-loss-based interpolation.} 
   We implement FedAvg-DFFL and evaluate it across polyhedral and strongly convex downstream optimization problems under controlled objective, constraint, and sample-size heterogeneity. The experiments are consistent with the theory: polyhedral problems are more sensitive to heterogeneity, while strongly convex problems are more robust to federation. We further evaluate Interp-DFFL, an implementation of a local-vs-federated interpolation rule, and show that it reduces aggregate regret and worst-client harm relative to pure local or federated versions.

\end{itemize}

The theoretical results apply to linear objective decision problems with compact feasible sets and i.i.d. sampling within each client. The federation-gain results compare local ERM to ERM on the pooled (mixture) objective. In other words, they are intended to characterize the \emph{statistical} tradeoff between pooling and heterogeneity rather than the convergence dynamics of a particular communication protocol. Our heterogeneity measures are deliberately interpretable: norm distances for the objective shifts and shape distances for the feasible set shifts. However, as with any worst-case guarantee, they may be conservative when the instance structure is favorable. Finally, while our experiments use a FedAvg-style implementation \citep{mcmahan2017communication}, our main theoretical bounds are stated at the level of risks and do not require a specific optimizer beyond producing predictors in the hypothesis class.

The remainder of the paper is organized as follows. \cref{sec:lit-review} presents the current state of the relevant literature. \cref{sec:client-hetero} develops sensitivity and heterogeneity bounds for the SPO+ loss under objective and feasible-set perturbations. \cref{sec:strongly-convex} specializes these bounds to strongly convex feasible regions and derives tighter stability-based controls. \cref{sec:fed-gain} applies the heterogeneity bounds to DFFL, introducing the client--mixture discrepancy and establishing local-versus-federated excess-risk and federation-gain guarantees. \cref{sec:experiments} reports our computational experiments across knapsack and portfolio problems under varied heterogeneity, and additionally contains our PJM case-study. \cref{sec:conclusions} concludes with implications and open directions.

\section{Related Work}
\label{sec:lit-review}

In this section, we review the most relevant results on decision-focused learning and federated learning separately. We then briefly outline the identified gap between the two research directions and position the proposed contribution.

\paragraph{Decision-Focused Learning.}
DFL (and closely related field of contextual optimization) aims to train predictors that minimize prescriptive rather than predictive error \citep{sadana2025survey, bertsimas2020predictive}. Whereas predict-then-optimize is a longstanding part of data analytics in operations management, integrating the downstream decision model into the predictive model's training is a fairly recent development in the literature \citep{mivsic2020data, kotary2021end, qi2022integrating}. 

Prescriptive analytics, where covariates inform parameters that are learned or optimized through data, is a major topic in OR literature. 
Examples include the data-driven (contextual) newsvendor with high-dimensional features and finite-sample out-of-sample cost guarantees \citep{ban2019big}, reproducing kernel Hilbert space-based data-driven optimization via objective/optimizer prediction with statistical guarantees \citep{bertsimas2022data}, and SAA frameworks that integrate residuals to approximate stochastic programs and provide asymptotic optimality and finite-sample guarantees \citep{kannan2025data}. Recent work also uses integrated estimation and optimization to obtain stable decision oracles and generalization bounds \citep{qi2025integrated}, and robust actionable prescriptive analytics under distributional ambiguity \citep{chen2026robust}.

DFL is a specific branch of prescriptive analytics \citep{mandi2024decision}, and can be categorized roughly into analytical methods \citep{halkin1974implicit, amos2017optnet}, perturbation and smoothing-based gradients \citep{domke2010implicit, poganvcic2019differentiation}, and surrogate loss functions for regret and decision quality \citep{elmachtoub2022smart,tang2024cave}.

The use of decision regret, or the optimality gap, as a loss function for training predictive models was proposed and formalized by \citet{elmachtoub2022smart}, who introduced the Smart Predict-then-Optimize framework. There, the authors formalize SPO regret and introduce SPO+ as a tractable surrogate for training. Subsequent work studied generalization, calibration, and empirical performance of SPO+ and related approaches \citep{el2019generalization,elmachtoub2020decision,jeong2022exact}. 
SPO+ is employed as the basis of our proposed DFL approach in the federated setting, since it has a strong theoretical foundation, through Fisher consistency \citep{ho2022risk} and calibration \citep{liu2021risk}, and it is known to perform well empirically \citep{mandi2020smart}. 
However, all existing DFL analyses assume a common downstream optimization problem, which may not be the case in distributed, decentralized, cross-silo operations, where the individual clients may have diverse goals, objectives, and requirements.

\paragraph{Federated Learning.}
One important constraint in cross-organizational collaboration is privacy, and it is increasingly studied in the field of operations research \citep{chen2025algorithmic}. FL enables collaborative training without sharing raw data, typically via a model parameter-averaging method performed by a server \citep{kairouz2021advances,li2025centralized,mcmahan2017communication}. Federated learning has been shown to achieve near-linear convergence speed-ups, even with multiple local training epochs and few communication rounds \citep{stich2018local}. However, while this addresses privacy concerns, it introduces a new obstacle: mitigating heterogeneity between clients. 

In federated learning literature, heterogeneity is roughly divided into three categories. (1) \textit{Data (statistical) heterogeneity:} the data between clients may be non-i.i.d, or clients may have very different sample sizes; (2) \textit{model heterogeneity:} clients may have different features, modalities, or pre-trained model architectures that need to somehow be aggregated; (3) \textit{system (device) heterogeneity:} \citep{gao2022survey, ye2023heterogeneous, su2023non}. Note, though, that in the contextual optimization setting, further heterogeneity can be important, as the decision problem itself, i.e., the objectives and constraints, can vary between different clients. 

While federated learning can still converge under moderate data heterogeneity, non-i.i.d clients significantly degrade training speed and model performance \citep{Li2020On, khaled2020tighter}. 
To combat this, there have been many algorithmic advances that build on the seminal \texttt{FedAvg} algorithm of \citep{mcmahan2017communication} to penalize weight drift \citep{li2020federated}, control gradient drift \citep{karimireddy2020scaffold}, or correct for data imbalance \citep{wang2020tackling}.
Another approach is \textit{personalization}. Here, studies train a global model through federated learning then adapt it to a specific client's domain through meta-learning approaches \citep{fallah2020personalized}, local personalization \citep{li2021ditto}, or multitask learning \citep{sattler2020clustered}. There is a similar line of work combining global and local models to personalize to individual client needs. Examples include adaptive mixtures and client-specific fine tuning \citep{m2024personalized,deng2020adaptive,mansour2020three}. These approaches adapt predictive models but do not incorporate downstream optimization layers, and therefore do not incorporate decision risk.

\paragraph{Positioning of our work.}
Heterogeneity in FL research is typically defined as a distribution shift in a supervised prediction task. However, there has been an effort to expand FL to other domains or tasks \citep{smith2017federated}. FL has been used for generative modeling \citep{augenstein2020generative}, recommendation systems \citep{flanagan2020federated}, and domain adaptation \citep{zhang2021federated}. In the scope of contextual optimization, FL has been extended to policy gradient methods \citep{khodadadian2022federated} and actor-critic methods \citep{zhuo2019federated}. 
Conceptually, our work lies at the intersection of decision-focused learning and federated learning. Whereas DFL studies how prediction errors propagate to a downstream optimization layer, and FL studies how distributed prediction models can be trained under data heterogeneity, we study how heterogeneity in the optimization layer itself affects the statistical tradeoff between local and federated training, and how that impacts the model's performance.











\section{Optimization heterogeneity between clients}
\label{sec:client-hetero}

\begin{table}[t]
\centering
\small
\renewcommand{\arraystretch}{1.12}
\setlength{\tabcolsep}{5pt}
\caption{Notation used throughout the paper.}
\label{tab:core-notation}
\begin{tabular}{@{}l p{0.73\linewidth}@{}}
\hline
\textbf{Symbol} & \textbf{Meaning} \\
\hline
$S_j$ & Feasible set for client $j$. \\

$\ell^S_{\mathrm{SPO+}}(\hat c, c)$ & SPO+ surrogate loss for feasible set $S$. \\

$\xi_S(u)$ & Support function of $S$: $\xi_S(u)=\max_{w\in S} u^\top w$. \\

$d_H(S_i,S_j)$ & Hausdorff distance between feasible sets $S_i$ and $S_j$. \\

$\delta_N$ & Translation-invariant \emph{shape distance} between feasible sets (see \cref{lem:translation-invariance}). \\

$c_d$ & Objective heterogeneity magnitude (difference between client/reference cost vectors, e.g., $\|c_i-c_j\|$). \\

$H(\hat c)$ & General heterogeneity upper bound on client SPO+ loss gap (see Theorem 1). \\

$\rho$ & Strong-convexity radius of a feasible set (used in the strongly convex refinement). \\

$\Delta_j$ & Client--mixture discrepancy: $\sup_{g\in\mathcal H}|R_j(g)-R_{\mathrm{mix}}(g)|$. \\

$R_j(g)$, $R_{\mathrm{mix}}(g)$ & Population SPO+ risk for client $j$ and the federated mixture risk, respectively. \\

$\varepsilon_j$, $\varepsilon_{\mathrm{mix}}$ & Statistical complexity / estimation terms for local and federated learning (Section 5). \\

\hline
\end{tabular}
\end{table}

\subsection{Notation and background.} 

An overview of notation used throughout this paper is given in \cref{tab:core-notation}.
We begin with preliminary definitions and assumptions that we will use throughout the paper.
Let $d\in\mathbb{N}$. For any nonempty set $S\subset\mathbb{R}^d$ define its \emph{support function}
\[
\xi_S(u) := \max_{w\in S} u^\top w \in \mathbb{R}\cup\{+\infty\}.
\]
Given a cost vector $c\in\mathbb{R}^d$ and feasible set $S$, consider the linear program
\begin{equation}
\label{eq:lp}
z_S^\star(c) := \min_{w\in S} c^\top w,
\qquad
W_S^\star(c):= \arg \min_{w\in S} c^\top w.
\end{equation}
Fix an \emph{oracle} $w_S^\star(\cdot)$ selecting a point $w_S^\star(c)\in W_S^\star(c)$ for every $c$
(whenever the set is nonempty).
The SPO loss function $\ell_{\rm SPO}(c,\hat{c})$ is defined as,
$$
\ell_{\rm SPO}(c,\hat{c}) := c^Tw^{\star}(\hat{c}) - z^{\star}(c).
$$
Following \citep{elmachtoub2022smart}, the surrogate SPO+ loss is
\begin{equation}
\label{eq:spo+}
\ell^S_{\mathrm{SPO+}}(\hat c,c)
:= \max_{w\in S}\{c^\top w - 2\hat c^\top w\} + 2\hat c^\top w_S^\star - z_S^\star,
\end{equation}
where the max term corresponds to the support function $\xi_S$, meaning the SPO+ loss can be rewritten as $\ell^S_{\mathrm{SPO+}}(\hat{c},c) = \xi_S(c-2\hat c) + 2\hat c^\top w_S^\star(c) - z_S^\star(c)$. 
The SPO+ loss is comprised of three parts: the maximum (or support function) term, the decision term, and the objective value term. Note that the objective value term itself can be expressed as a support function, i.e., $-\xi_S(-c)$.

Let $\| \cdot \|$ denote the $L^2$ norm on $\mathbb{R}^d$. While all of our results in \cref{sec:client-hetero} and some in \cref{sec:strongly-convex} apply to all norms, for simplicity's sake (and to align with our experimentation) we use the $\| \cdot \| = \| \cdot  \|_2$ throughout this paper.

For any nonempty compact sets $S_1,S_2\subset\mathbb{R}^d$, we define the Hausdorff distance 
$$d_H(S_1,S_2) := \max\Big\{ \sup_{x\in S_1}\inf_{y\in S_2}\|x-y\|,\; \sup_{y\in S_2}\inf_{x\in S_1}\|x-y\| \Big\},$$
and the diameter
$D(S):=\sup_{u,v\in S}\|u-v\|.$
Here $D(S) = D_S$ is also the Euclidean diameter of the feasible set, which matches the notation used by \citep{liu2021risk}.

Throughout the rest of the paper, we will maintain the following assumptions.
\begin{assumption}[Compactness]
\label{ass:compact}
The feasible sets of interest are nonempty and compact, so $\xi_S(\cdot)$ is finite and maxima/minima
in \eqref{eq:lp}--\eqref{eq:spo+} are attained.
\end{assumption}

\begin{assumption}[Convexity]
\label{ass:convex}
For each instance, the feasible sets of interest $S \subseteq \mathbb{R}^d$ are convex.
\end{assumption}
\begin{assumption}[Boundedness of vectors]
\label{ass:bounded}
    The objective function vector's norm $0<C_{\min}\leq \|c\| \leq C_{\max}$ is bounded above and strictly bounded away from zero, while the prediction vector's norm $\|\hat{c}\| \leq \tau$ is bounded from above.
\end{assumption}

\begin{example}[A toy example]
\label{subsec:toy-example}
Before presenting the main results, we demonstrate \emph{why} the bound in the general case has to be fairly loose, and under what assumptions we can expect a better behaved mixture model with a simplified example.

\paragraph{The polyhedral case.}
Suppose that we have two clients with coefficient vectors $c_1$ and $c_2$, with a common feasible set $S$ defined as: $S=$\{$w\in \mathbb{R}^2:0\leq w_1 \leq 1, 0\leq w_2 \leq 1, w_1 + w_2 \geq 1$\}, shown in \cref{fig:toy-polygon}, with vertices $v_1=(1,0), v_2 = (0,1),$ and $v_3=(1,1)$. For a specific $(x,c)$ pair, the clients' coefficient vectors are defined as: 
$$ c^{(1)}=c_1 + c = [1, 1+\epsilon]^T,  c^{(2)}= c_2 + c = [1+\epsilon,1]^T,$$
for $\epsilon >0$, where $c$ is the common ground-truth vector the model is trying to approximate, and $c_i$ the heterogeneity term. They each optimize $\min_w c^{(1)T}w, \min_w c^{(2)T}w$ with the corresponding optimizers $w^\star_{S}(c^{(1)}),w^\star_{S}(c^{(2)})$ at extreme points $v_1 = (1,0)$ with $c^{(1)T}v_1 = 1$, and $v_2 = (0,1)$ with $c^{(2)T}v_2 = 1$. 
Note that the change in the resulting optimal solution is, 
$$ \|w^\star_{S}(c^{(1)}) - w^\star_{S}(c^{(2)})\| = \|v_1-v_2\|= \sqrt{2} = D_S, $$
even as $\|c_1 - c_2\| = \sqrt{2} \epsilon \rightarrow0$, $\epsilon\rightarrow 0$. 

\begin{figure}[t]
    \centering
    
    \begin{subfigure}[t]{0.48\textwidth}
        \centering
        \includegraphics[width=\textwidth]{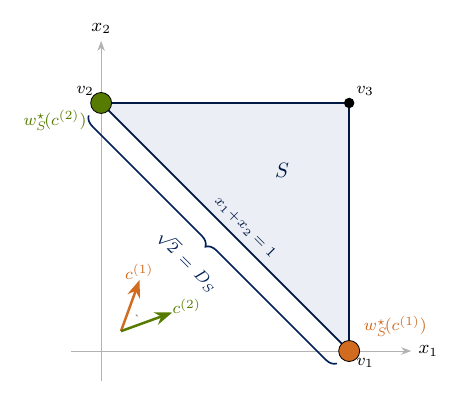}
        \caption{Polyhedral case}
        \label{fig:toy-polygon}
    \end{subfigure}
    \hfill
    \begin{subfigure}[t]{0.48\textwidth}
        \centering
        \includegraphics[width=\textwidth]{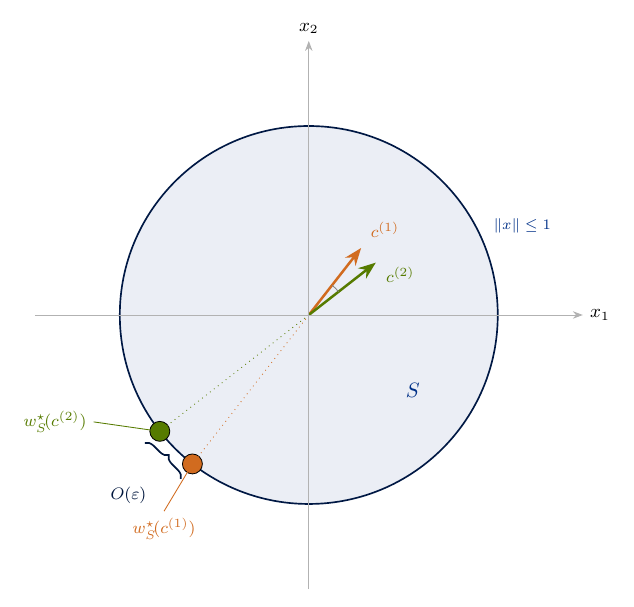}
        \caption{Strongly convex case}
        \label{fig:toy-circle}
    \end{subfigure}
    
    \caption{Example configurations of two clients with slightly different objective vectors and the impact on the downstream decision vectors.}
    \label{fig:toy-polyhedral}
\end{figure}

We can now calculate their corresponding worst-case SPO+ losses as $\max_{\hat{c}} |\ell^1_{\mathrm{SPO+}} - \ell^2_{\mathrm{SPO+}}|$. Let $\hat{c} = [\hat{c}_1, \hat{c}_2]^T,$ with $\|\hat{c}\| \leq B$, using vertex enumeration for $S$,
$$\max_{\|\hat c\| \le B} |\ell^1_{\mathrm{SPO+}}(\hat c) - \ell^2_{\mathrm{SPO+}}(\hat c)| \le \epsilon + 2\sqrt{2}B$$
In other words, an arbitrarily small difference in objective function vectors between clients can cause a large difference in pointwise SPO+ losses, a direct consequence of the fact that linear program solutions are in the extreme points. This then implies that any possible bound that we can establish on the SPO+ loss with respect to the client objective heterogeneity in a general case necessarily has to be relatively loose to account for this. A tighter bound may be possible if additional assumptions are made, though. 

\paragraph{The strongly convex case.}
Now consider the same two clients, but a unit-ball feasible set, i.e., $S=\{w\in\mathbb{R}^2:\|w\| \leq1\}$. We can observe that the optimal solution vector is less sensitive to objective shifts, and therefore so is the difference in SPO+ loss, as shown in \cref{fig:toy-circle}. Specifically, the components of the loss for $S$ are equal to: $$w^{\star}_S(c) = -\frac{c}{\|c\|}, \xi_S(u)=\|u\|,z^{\star}_S(c)=-\|c\|.$$ 
Consequently, 
\[\|w^{\star}_S(c^{(1)}) - w^{\star}_S(c^{(2)})\| = \frac{\sqrt{2}\epsilon}{\sqrt{2+\epsilon^2+2\epsilon}} = O(\epsilon).
\]

\begin{dmath}
\max_{\|\hat c\| \le B} |\ell^1_{\mathrm{SPO+}}(\hat c) - \ell^2_{\mathrm{SPO+}}(\hat c)| \leq \sqrt{2}\epsilon\left(1 + \frac{2B}{\sqrt{2+2\epsilon+\epsilon^2}}\right) = O(\epsilon(1+B)),
\end{dmath}
which vanishes as $\epsilon \rightarrow 0$, unlike the polyhedral bound $\epsilon + 2\sqrt{2}B$. The key observation here is that curvature replaces diameter in the decision term, and therefore the loss scales with cost heterogeneity, which is what we formalize in \cref{cor:thm1-sc} below. Polyhedral sets, on the other hand, force a looser bound to account for potential discontinuity in the decision vector.
\end{example}

\subsection{Bounding point-wise losses in the general case}

First, we establish the sensitivity of expression \eqref{eq:spo+}  to perturbations in the feasible set, as measured by the Hausdorff distance between the sets. 
More detailed account of the properties of the support functions can be found in \citet{schneider2013convex}. While relatively straightforward, proofs for \cref{lem:support-set,lem:zstar-set,lem:diameter-joint,lem:oracle-free,lem:translation-invariance} are given in the Appendix for the sake of completeness.
\begin{lemma}[Sensitivity to set perturbations]
\label{lem:support-set}
Let $S_1,S_2$ be nonempty compact sets and $\delta=d_H(S_1,S_2)$. Then for any $u\in\mathbb{R}^d$,
\[
|\xi_{S_1}(u)-\xi_{S_2}(u)| \le \|u\| \,\delta.
\]
\end{lemma}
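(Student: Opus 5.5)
The plan is a direct one-sided argument that is then symmetrized. Fix $u\in\mathbb{R}^d$. Because $S_1$ is compact, the maximum defining $\xi_{S_1}(u)$ is attained at some $w_1\in S_1$, so $\xi_{S_1}(u)=u^\top w_1$. From the definition of $\delta=d_H(S_1,S_2)$ we have $\inf_{y\in S_2}\|w_1-y\|\le\delta$. Since $S_2$ is compact, this infimum is attained, which gives $y_1\in S_2$ with $\|w_1-y_1\|\le\delta$.

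Using $y_1\in S_2$ and the Cauchy--Schwarz inequality,
\[
\xi_{S_1}(u)=u^\top y_1+u^\top(w_1-y_1)\le \xi_{S_2}(u)+\|u\|\,\|w_1-y_1\|\le \xi_{S_2}(u)+\|u\|\,\delta .
\]
The Hausdorff distance is symmetric in its two arguments, so swapping the roles of $S_1$ and $S_2$ gives $\xi_{S_2}(u)\le\xi_{S_1}(u)+\|u\|\,\delta$. Together these two inequalities give $|\xi_{S_1}(u)-\xi_{S_2}(u)|\le\|u\|\,\delta$.

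No step here is a real obstacle; the only care needed is attainment of the maximum and the infimum. Compactness (\cref{ass:compact}) gives both. Attainment is not actually needed: for any $\eta>0$ one can take a near-maximizer $w_1$ and a point $y_1\in S_2$ with $\|w_1-y_1\|\le\delta+\eta$, and then let $\eta\to 0$. The argument uses neither convexity nor the particular norm beyond Cauchy--Schwarz. For a general norm, the same proof goes through with $\|u\|$ replaced by the dual norm $\|u\|_*$, and this coincides with $\|u\|$ for the $L^2$ norm used here.
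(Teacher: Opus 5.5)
Your proof is correct and matches the paper's argument: match a point of $S_1$ to a point of $S_2$ within $\delta$ using the Hausdorff definition, apply Cauchy--Schwarz to get $\xi_{S_1}(u)\le\xi_{S_2}(u)+\|u\|\,\delta$, then swap $S_1$ and $S_2$. The paper fixes an arbitrary $w\in S_1$ and maximizes at the end, whereas you start from a maximizer, but this is only a cosmetic difference; your remarks on attainment and on the dual-norm version are accurate.
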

\begin{lemma}[Set sensitivity of $z_S^\star$]
\label{lem:zstar-set}
Let $S_1,S_2$ be nonempty compact sets and $\delta=d_H(S_1,S_2)$. Then for any $c\in\mathbb{R}^d$,
\[
|z_{S_1}^\star(c)-z_{S_2}^\star(c)| \le \|c\| \,\delta.
\]
\end{lemma}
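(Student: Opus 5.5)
The plan is to reduce the claim to \cref{lem:support-set} through the identity already noted after \eqref{eq:spo+}: the optimal value is a negated support function. For any nonempty compact $S$,
\[
z_S^\star(c)=\min_{w\in S} c^\top w = -\max_{w\in S}(-c)^\top w = -\xi_S(-c).
\]
By \cref{ass:compact}, both the minimum and the maximum are attained and finite.

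Applying this identity to $S_1$ and $S_2$ gives
\[
|z_{S_1}^\star(c)-z_{S_2}^\star(c)| = |\xi_{S_2}(-c)-\xi_{S_1}(-c)|.
\]
We then invoke \cref{lem:support-set} with $u=-c$. This bounds the right-hand side by $\|-c\|\,\delta=\|c\|\,\delta$, which is the claim.

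As a self-contained check, or in case one prefers not to route through the support function, we can argue directly from the Hausdorff distance. Let $w_2\in W_{S_2}^\star(c)$. By compactness there is $w_1\in S_1$ with $\|w_1-w_2\|\le\delta$. Then
\[
z_{S_1}^\star(c)\le c^\top w_1 \le c^\top w_2+\|c\|\|w_1-w_2\|\le z_{S_2}^\star(c)+\|c\|\,\delta,
\]
using Cauchy--Schwarz. Swapping the roles of $S_1$ and $S_2$ gives the reverse inequality.

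There is no real obstacle here. The only point needing care is that the infimum in the definition of $d_H$ is attained, so that $\|w_1-w_2\|\le\delta$ holds exactly rather than up to an arbitrary $\varepsilon>0$. Compactness of $S_1$ guarantees this. Alternatively, one can carry an $\varepsilon$ through the argument and let $\varepsilon\to0$. Convexity is not needed.
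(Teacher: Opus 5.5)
Your proposal is correct and matches the paper's proof: the paper writes $z_S^\star(c)=-\xi_S(-c)$ and applies \cref{lem:support-set} with $u=-c$, exactly as you do. Your direct Hausdorff argument is just the paper's proof of \cref{lem:support-set} specialized to $u=-c$, so it serves as a self-contained check rather than a different route.
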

\begin{lemma}[Translation-invariance of the SPO+ loss function]
\label{lem:translation-invariance}
For $\ell_S(\hat c,c)$ defined as in \cref{eq:spo+} and $v,c,\hat{c} \in \mathbb{R}^d$,
$$
\ell_{S+v}(\hat c,c) = \ell_S(\hat c,c).
$$
\end{lemma}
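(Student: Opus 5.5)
The plan is to compute how each of the three components of the SPO+ loss in \eqref{eq:spo+} transforms when $S$ is replaced by $S+v=\{w+v: w\in S\}$, and then check that the $v$-dependent pieces cancel.

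\textbf{Step 1: the translation rules.} For the support function we have
\[
\xi_{S+v}(u)=\max_{w\in S} u^\top(w+v)=\xi_S(u)+u^\top v .
\]
Because $w\mapsto w+v$ is a bijection from $S$ onto $S+v$, the optimal value satisfies
\[
z^\star_{S+v}(c)=\min_{w\in S} c^\top (w+v)=z^\star_S(c)+c^\top v .
\]
The same bijection gives $W^\star_{S+v}(c)=W^\star_S(c)+v$.

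\textbf{Step 2: substitute and cancel.} Assume for now that the oracle is translation-equivariant, $w^\star_{S+v}(c)=w^\star_S(c)+v$. Substituting into the rewritten form $\xi_S(c-2\hat c)+2\hat c^\top w^\star_S(c)-z^\star_S(c)$ adds three $v$-terms:
\[
(c-2\hat c)^\top v \;+\; 2\hat c^\top v \;-\; c^\top v \;=\;0 .
\]
So all the $v$-dependence cancels, and $\ell_{S+v}(\hat c,c)=\ell_S(\hat c,c)$. Compactness (\cref{ass:compact}) guarantees that every max and min here is attained and finite.

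\textbf{Step 3: the oracle, which is the only subtle point.} The decision term depends on which minimizer the oracle selects. If $W^\star_S(c)$ is not a singleton, an arbitrary oracle for $S+v$ need not be the translate of the oracle for $S$. I would handle this in one of two ways:
\begin{itemize}
\item adopt the convention that the oracle for $S+v$ is defined as the translate of the oracle for $S$, which is the natural choice since it is the same problem in shifted coordinates; or
\item note that the identity holds exactly whenever $\hat c^\top w$ is constant over $W^\star_S(c)$, in particular whenever the minimizer is unique.
\end{itemize}
I would state the lemma under the equivariant-oracle convention. With that convention, Step 2 completes the proof.
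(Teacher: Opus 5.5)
Your proposal is correct and follows the same route as the paper: compute the shifts $\xi_{S+v}(u)=\xi_S(u)+u^\top v$, $z^\star_{S+v}(c)=z^\star_S(c)+c^\top v$ and $w^\star_{S+v}(c)=w^\star_S(c)+v$, then observe that the $v$-terms cancel. Your explicit handling of the oracle, either through a translation-equivariant selection or through constancy of $\hat c^\top w$ on $W^\star_S(c)$, makes precise an assumption the paper uses implicitly when it asserts $w^\star_{S'}=w^\star_S+v$.
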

The first two results are known support function properties. 
Intuitively, they mean that the difference in loss between two clients with different feasible sets is bounded by the distance between them, scaled by the magnitude of their objective function coefficients. 
This result can be further strengthened based on \cref{lem:translation-invariance}, which establishes set-translation invariance of SPO+. We can then consider a new distance metric, which we refer to as \textbf{shape distance}, $\delta_N := \inf_v d_H(S_{\rm ref},S_{\rm cl} + v)$. We can ``shift'' one client's feasible set in order to minimize the heterogeneity, or, in other words, the only important difference between the clients is in the shape of their feasible sets, not their distance to the origin. Two sets may have identical constraints up to a shift in coordinates, and SPO+ loss does not distinguish between them.

For the case of objective heterogeneity, we employ both the support function and objective value terms together in \cref{lem:diameter-joint}, and we can obtain an intuitive result: the terms are bounded above by the distance between objective vectors, scaled by the feasible set geometry.
\begin{lemma}[Diameter-Lipschitzness of $\xi_S(c-2\hat c)-z_S^\star(c)$]
\label{lem:diameter-joint}
Under assumptions, for any $c_1,c_2,\hat c\in\mathbb{R}^d$,
$$
\Big|
\big(\xi_S(c_1-2\hat c)-z_S^\star(c_1)\big)
-
\big(\xi_S(c_2-2\hat c)-z_S^\star(c_2)\big)
\Big|
\le D(S)\,\|c_1-c_2\|.
$$
\end{lemma}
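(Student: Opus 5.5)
The key observation is that $-z_S^\star(c)=\xi_S(-c)$, so the quantity in the statement is
\[
\phi(c):=\xi_S(c-2\hat c)+\xi_S(-c),
\]
a sum of two support functions evaluated at affine functions of $c$. Treated separately, each term is Lipschitz in $c$ with constant $\max_{w\in S}\|w\|$: indeed $|\xi_S(u)-\xi_S(u')|\le \max_{w\in S}\|w\|\,\|u-u'\|$. That gives a bound of $2\max_{w\in S}\|w\|\,\|c_1-c_2\|$, which depends on where $S$ sits relative to the origin rather than on its diameter. The point of the lemma is that the two terms move in opposite directions in $c$, so their leading parts cancel and only differences of points of $S$ survive.

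To exploit this cancellation, I will bound $\phi(c_1)-\phi(c_2)$ from above and then use symmetry.

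\textbf{Choice of maximizers.} By \cref{ass:compact} all maxima are attained. Let $w_1\in\arg\max_{w\in S}(c_1-2\hat c)^\top w$, and let $v_2\in\arg\max_{w\in S}(-c_2)^\top w$, i.e.\ $v_2=w_S^\star(c_2)$.

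\textbf{Upper bounds on each term.}
\begin{itemize}
\item Since $\xi_S(c_2-2\hat c)\ge (c_2-2\hat c)^\top w_1$, we get $\xi_S(c_1-2\hat c)-\xi_S(c_2-2\hat c)\le (c_1-c_2)^\top w_1$.
\item Since $\xi_S(-c_1)\ge -c_1^\top v_2$, we get $\xi_S(-c_1)-\xi_S(-c_2)\le -c_1^\top v_2+c_2^\top v_2=-(c_1-c_2)^\top v_2$.
\end{itemize}

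\textbf{Combining.} Adding the two bounds and applying Cauchy–Schwarz gives
\[
\phi(c_1)-\phi(c_2)\le (c_1-c_2)^\top(w_1-v_2)\le \|c_1-c_2\|\,\|w_1-v_2\|\le D(S)\,\|c_1-c_2\|,
\]
because $w_1,v_2\in S$.

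\textbf{Symmetry.} Exchanging the roles of $c_1$ and $c_2$ (with maximizers chosen analogously) gives the same bound for $\phi(c_2)-\phi(c_1)$. Together these yield the absolute-value statement.

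Convexity of $S$ is not needed anywhere; only compactness is used, so that the maximizers exist.

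The main obstacle is the pairing step. One must pick maximizers attached to $c_1$ for the first term and to $c_2$ for the second, with the opposite choice in the symmetric direction. Only with this pairing do both inequalities point the same way, so that the difference $w_1-v_2$ of two points of $S$ appears rather than a sum $w_1+v_2$. If the pairing fails, I would fall back on \cref{lem:translation-invariance}: translate $S$ so that $0\in S$, giving $\max_{w\in S}\|w\|\le D(S)$. However, that route only yields the weaker constant $2D(S)$, so the pairing argument is the intended one.
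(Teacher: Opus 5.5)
Your strategy is the paper's: bound each of the two differences from above by a linear functional at a maximizer, add, apply Cauchy--Schwarz, then symmetrize. However, your second bullet is wrong, and the error lies in exactly the pairing you single out as the crux. To bound $\xi_S(-c_1)-\xi_S(-c_2)$ from above, you need the maximizer of the \emph{positive} term. With $v_2\in\arg\max_{w\in S}(-c_2)^\top w$ you have $\xi_S(-c_2)=-c_2^\top v_2$ exactly, but only $\xi_S(-c_1)\ge -c_1^\top v_2$. That gives $\xi_S(-c_1)-\xi_S(-c_2)\ge -(c_1-c_2)^\top v_2$, the reverse of what you wrote. The inequality you claim is false. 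Take $d=1$, $S=[0,1]$, $c_1=1$, $c_2=-1$: then $v_2=1$ and $\xi_S(-c_1)-\xi_S(-c_2)=0-1=-1$, whereas $-(c_1-c_2)v_2=-2$. The error is also not absorbed downstream. In the same instance with $\hat c=1$, you get $w_1=0$ and $\phi(c_1)-\phi(c_2)=0-1=-1$, while your combined bound asserts $\phi(c_1)-\phi(c_2)\le (c_1-c_2)(w_1-v_2)=-2$.

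The repair is to attach both maximizers to $c_1$. Take $v_1\in W_S^\star(c_1)$, i.e.\ $v_1\in\arg\max_{w\in S}(-c_1)^\top w$. Then $\xi_S(-c_1)=-c_1^\top v_1$ and $\xi_S(-c_2)\ge -c_2^\top v_1$, so $\xi_S(-c_1)-\xi_S(-c_2)\le -(c_1-c_2)^\top v_1$. Adding your first bullet, which is correct, gives $\phi(c_1)-\phi(c_2)\le (c_1-c_2)^\top(w_1-v_1)\le D(S)\|c_1-c_2\|$. Your $v_2$ belongs in the symmetric step for $\phi(c_2)-\phi(c_1)$, together with $w_2$. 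This is precisely the paper's proof, which writes the two one-sided bounds as $\max_{w\in S}(c_1-c_2)^\top w$ and $\max_{v\in S}(c_2-c_1)^\top v$ and adds them. Note also that the \emph{difference} $w_1-v_1$, rather than a sum, comes from the minus sign inside $\xi_S(-c)$, not from cross-pairing the maximizers. Your ``main obstacle'' paragraph therefore has the diagnosis backwards. Your observation that only compactness (attainment of the maxima) is used, not convexity, is correct.
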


\begin{lemma}[Bounding linear functionals across two sets]
\label{lem:oracle-free}
Let $S_1,S_2$ be compact and $\delta=d_H(S_1,S_2)$. For any $\hat c\in\mathbb{R}^d$ and any
$w_1\in S_1$, $w_2\in S_2$,
\[
|\hat c^\top w_1 - \hat c^\top w_2|
\le \|\hat c\| \big(\delta + \min\{D(S_1),D(S_2)\}\big).
\]
\end{lemma}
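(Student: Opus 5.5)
By Cauchy--Schwarz, $|\hat c^\top w_1 - \hat c^\top w_2| = |\hat c^\top(w_1-w_2)| \le \|\hat c\|\,\|w_1-w_2\|$. It therefore suffices to prove the purely geometric bound
\[
\|w_1-w_2\| \le \delta + \min\{D(S_1),D(S_2)\}
\qquad\text{for all } w_1\in S_1,\ w_2\in S_2 .
\]
Since the right-hand side is symmetric in the two sets, it is enough to show $\|w_1-w_2\|\le \delta + D(S_2)$ and $\|w_1-w_2\|\le \delta + D(S_1)$ separately, and then take the smaller of the two.

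For the bound with $D(S_2)$, I would use the definition of the Hausdorff distance together with compactness (\cref{ass:compact}). Since $\sup_{x\in S_1}\inf_{y\in S_2}\|x-y\|\le\delta$ and $S_2$ is compact, the infimum is attained. So there is a point $y\in S_2$ with $\|w_1-y\|\le\delta$. The triangle inequality then gives
\[
\|w_1-w_2\|\le \|w_1-y\|+\|y-w_2\|\le \delta + D(S_2),
\]
because $y$ and $w_2$ both lie in $S_2$. For the bound with $D(S_1)$, I would swap the roles of the sets: pick $x\in S_1$ with $\|w_2-x\|\le\delta$, using the other half of the Hausdorff definition, and conclude $\|w_1-w_2\|\le \|w_1-x\|+\|x-w_2\|\le D(S_1)+\delta$.

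No convexity is needed anywhere in this argument. The only point requiring care is the attainment of the infimum in the Hausdorff definition. If one preferred to avoid compactness, an $\varepsilon$-approximate nearest point would give the bound with $\delta+\varepsilon$, and letting $\varepsilon\to0$ recovers the same inequality. Beyond that I expect no real obstacle; the result is a direct combination of Cauchy--Schwarz with the triangle inequality.
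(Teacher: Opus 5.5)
Your proof is correct and is the same as the paper's: choose a point of $S_2$ within $\delta$ of $w_1$, apply the triangle inequality and Cauchy--Schwarz to get $\|\hat c\|(\delta + D(S_2))$, then swap the roles of the two sets and take the minimum. The only differences are cosmetic: you apply Cauchy--Schwarz once at the start instead of to each piece, and you justify via compactness that the nearest point exists, which the paper leaves implicit.
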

 As we saw in \cref{subsec:toy-example}, in the general case the decision vector can jump up to the diameter of the feasible set even under slight heterogeneity, i.e., even when $S_1 \rightarrow S_2,$ the term $D(S)$ cannot be excluded from the bound. 

We now present the main result of \cref{sec:client-hetero} in \cref{thm:main}: \textit{The difference between the point-wise SPO+ losses $\ell^i_{\mathrm{SPO+}},\ell^j_{\mathrm{SPO+}}$ of two heterogeneous clients $i,j$ in set and objective is bounded above by two terms: the decision vector shift, and the support/objective mismatch.} The proof is given in Appendix B of the online companion.

\begin{theorem}[SPO+ loss discrepancy under objective and feasible-set shift]
\label{thm:main}Define $c_d := \|c_{\rm ref} - c_{\rm cl}\|, \delta_N := \inf_v d_H(S_{\rm ref},S_{\rm cl} + v),
D_{\rm ref}:=D(S_{\rm ref}),\;\; D_{\rm cl}:=D(S_{\rm cl}),
D_{\min}:=\min\{D_{\rm ref},D_{\rm cl}\}.$

For any $\hat c\in\mathbb{R}^d$, define
\begin{dmath}
\mathcal{H}(\hat c)
= 2\|\hat c\| (\delta_N + D_{\min}) 
+\min\left\{
  D_{\rm ref}\,c_d
  + \delta_N\left(\|c_{\rm cl} - 2\hat{c}\|+\|c_{\rm cl}\|\right), 
  D_{\rm cl}\,c_d
  + \delta_N\left(\|c_{\rm ref} - 2\hat{c}\|+\|c_{\rm ref}\|\right)
\right\}.
\end{dmath}
Then
$
\bigl|\ell_{S_{\rm ref}}(\hat c,c_{\rm ref}) - \ell_{S_{\rm cl}}(\hat c,c_{\rm cl})\bigr|
\le
\mathcal{H}(\hat c).
$
\end{theorem}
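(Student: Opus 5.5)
We first use \cref{lem:translation-invariance} to reduce to the case where the shape distance is realized as an ordinary Hausdorff distance. Fix any $v\in\mathbb{R}^d$ and set $S'_{\rm cl}:=S_{\rm cl}+v$. Then $\ell_{S_{\rm cl}}(\hat c,c_{\rm cl})=\ell_{S'_{\rm cl}}(\hat c,c_{\rm cl})$, and diameters are translation invariant, so $D(S'_{\rm cl})=D_{\rm cl}$. It therefore suffices to prove the bound with $\delta_N$ replaced by $\delta:=d_H(S_{\rm ref},S'_{\rm cl})$. The resulting right-hand side is continuous and nondecreasing in $\delta$. Taking the infimum over $v$ then gives the statement, since the bound holds for every $v$ and hence for every $\delta>\delta_N$.

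With $S_1=S_{\rm ref}$ and $S_2=S'_{\rm cl}$, write each loss as $\ell_{S}(\hat c,c)=[\xi_S(c-2\hat c)-z_S^\star(c)]+2\hat c^\top w_S^\star(c)$. The discrepancy then splits into two pieces.
\begin{itemize}
\item \textbf{Decision term.} This is $2|\hat c^\top w_{S_1}^\star(c_{\rm ref})-\hat c^\top w_{S_2}^\star(c_{\rm cl})|$. By \cref{lem:oracle-free} it is at most $2\|\hat c\|(\delta+D_{\min})$, which is exactly the first summand of $\mathcal H(\hat c)$.
\item \textbf{Support/objective term.} This is $\big|[\xi_{S_1}(c_{\rm ref}-2\hat c)-z^\star_{S_1}(c_{\rm ref})]-[\xi_{S_2}(c_{\rm cl}-2\hat c)-z^\star_{S_2}(c_{\rm cl})]\big|$. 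We bound it in two ways and keep the smaller bound, which produces the $\min$.
\end{itemize}

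For the first route in the support/objective term, insert the intermediate quantity $\xi_{S_1}(c_{\rm cl}-2\hat c)-z^\star_{S_1}(c_{\rm cl})$. Changing only the cost on the fixed set $S_1$ costs at most $D_{\rm ref}\,c_d$ by \cref{lem:diameter-joint}. Changing only the set at the fixed cost $c_{\rm cl}$ costs at most $\delta\|c_{\rm cl}-2\hat c\|$ by \cref{lem:support-set} and at most $\delta\|c_{\rm cl}\|$ by \cref{lem:zstar-set}. Together this gives $D_{\rm ref}c_d+\delta(\|c_{\rm cl}-2\hat c\|+\|c_{\rm cl}\|)$.

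The second route instead inserts $\xi_{S_2}(c_{\rm ref}-2\hat c)-z^\star_{S_2}(c_{\rm ref})$. Applying \cref{lem:diameter-joint} on $S_2$ and the set lemmas at cost $c_{\rm ref}$ gives $D_{\rm cl}c_d+\delta(\|c_{\rm ref}-2\hat c\|+\|c_{\rm ref}\|)$. Since both routes bound the same quantity, it is at most their minimum. Adding the decision-term bound via the triangle inequality completes the argument.

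We expect the only delicate step to be the translation step. Two points need checking. First, \cref{lem:oracle-free} is oracle-free, so it applies regardless of which optimizer is selected in the translated set. Second, the infimum in $\delta_N$ may not be attained. Since the bound holds for every translation $v$, we pass to the limit $\delta\downarrow\delta_N$; alternatively, compactness ensures the infimum is attained. The remaining steps are direct applications of the earlier lemmas.
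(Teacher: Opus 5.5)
Your proposal is correct and follows the paper's proof. It uses the same split into the support/objective term and the decision term, the same two intermediate quantities producing the $\min$, and the same lemmas (\cref{lem:diameter-joint}, \cref{lem:support-set}, \cref{lem:zstar-set}, \cref{lem:oracle-free}). The only difference is that you translate $S_{\rm cl}$ once at the outset and then decompose, whereas the paper invokes \cref{lem:translation-invariance} inside each step. Your ordering is the more careful one, because $\xi_S(c-2\hat c)-z_S^\star(c)$ and $2\hat c^\top w_S^\star(c)$ shift by $\mp 2\hat c^\top v$ under $S\mapsto S+v$, so only their sum is invariant.
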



\cref{thm:main} bounds the point-wise loss discrepancy by the term $\mathcal{H}(\hat{c})$, which consists of two components. The first term comes from the decision vector shift and \cref{lem:oracle-free}, and depends on (i) the difference in set shapes and (ii) the size of the smallest set. It is then scaled by the magnitude of the predictive model. 
The second term stems from the support function and the objective value terms, and it characterizes the objective heterogeneity, the feasible sets' shape distance, scaled largely by the magnitude of the objective functions and model outputs, as well as the client geometry.

We also note that the SPO+ loss is \emph{not scale invariant}. In practice, model outputs can grow arbitrarily large while gradients vanish, which would significantly weaken the bound in \cref{thm:main}. 
For the analysis, \cref{ass:bounded} prevents this scenario, and we enforce it for the experiments conducted in \cref{sec:experiments}. This means that we can essentially control the terms $\|\hat{c}\|$ and $\|c-2\hat{c}\|$ in the bound through our model design, while the other terms depend on the parameters of the clients' downstream optimization problems.

The bound in \cref{thm:main} can appear to be relatively conservative due to the diameter constant. At the same time, without additional assumptions, it cannot be eliminated, i.e., this bound is tight in general. We demonstrate this with a lower bound construction based on
\cref{subsec:toy-example}.
\begin{example}[Irreducibility of the diameter-scale term]
Consider the same polyhedral setting as in \cref{subsec:toy-example}. 
As observed, the discontinuity in the optimal solution yields a nonvanishing SPO\({+}\) discrepancy. Fix
\(B>0\) and set
$
\hat c_B=\frac{B}{\sqrt{2}}(1,-1)$ and $\|\hat c_B\|=B.$
Since \(z_S^\star(c^{(1)})=z_S^\star(c^{(2)})=1\),
$
\ell^S_{\mathrm{SPO+}}(c^{(1)},\hat c_B)
-
\ell^S_{\mathrm{SPO+}}(c^{(2)},\hat c_B)
= 
\Big[
\xi_S(c^{(1)}-2\hat c_B)
-
\xi_S(c^{(2)}-2\hat c_B)
\Big]
+
2\hat c_B^\top(w_S^\star(c^{(1)})-w_S^\star(c^{(2)})).
$
The decision term is
\[
2\hat c_B^\top(w_S^\star(c^{(1)})-w_S^\star(c^{(2)}))=2\sqrt{2}B.
\]
Moreover by \cref{lem:diameter-joint},
$
|\xi_S(c^{(1)}-2\hat c_B) -
\xi_S(c^{(2)}-2\hat c_B)|
\leq
2\epsilon.
$
Therefore,
\[
\sup_{\|\hat c\|\leq B}
\left|
\ell^S_{\mathrm{SPO+}}(c^{(1)},\hat c)
-
\ell^S_{\mathrm{SPO+}}(c^{(2)},\hat c)
\right|
\geq
2\sqrt{2}B - 2\epsilon.
\]
In this example, the feasible sets are identical, and therefore \(\delta_N=0\),
\(D_{\min}=D_S\), and
\(c_d=\|c^{(1)}-c^{(2)}\|=\sqrt{2}\epsilon\). Hence \cref{thm:main} gives
$
\mathcal H(\hat c_B)
=
2\|\hat c_B\|D_S+D_Sc_d
=
2\sqrt{2}B+2\epsilon.
$
Thus, as \(\epsilon\to 0\), the upper bound in \cref{thm:main} matches the
lower-bound construction in the leading term \(2D_SB\). Consequently,
without additional assumptions, the
diameter-scale term \(2\|\hat c\|D_{\min}\) cannot, in general, be replaced by a
quantity that vanishes with objective heterogeneity. 
An example of such an assumption is the strongly convex case considered next.
\end{example}


\section{Strongly Convex Feasible Sets}
\label{sec:strongly-convex}
\begin{definition}[Strongly convex set]
\label{def:strongly-convex}
A set $S \subseteq \mathbb{R}^n$ is called strongly convex with radius $\rho>0$ if it can be expressed as an intersection of closed balls of radius $\rho$, i.e.,
$
S = \bigcap_{x \in X} B_\rho(x),
$
where $X\subseteq \mathbb{R}^n$ is a set of centers and $B_\rho(x)$ is a closed ball of radius $\rho$ centered at $x$ \citep{polovinkin1996strongly}.
\end{definition}
\begin{assumption}[Strong convexity of the feasible set]
\label{ass:strong-convex-set}
     In addition to previous assumptions, the feasible sets of interest are strongly convex of radius $\rho$ under \cref{def:strongly-convex}.
\end{assumption}

In \cref{sec:client-hetero}, the decision term was controlled by the diameter of the set $S$, which was part of the upper bound regardless of the size of the objective perturbation. As we saw in \cref{subsec:toy-example}, strong convexity in the feasible set can make the decision term change continuously with perturbations. In this section, we formalize this in an upper bound for the strongly convex case that is tighter than the general bound of \cref{thm:main}. Proofs for \cref{lem:dir-stability,lem:QG,lem:set-stability} can be found in Appendix B of the online companion..
We also introduce the following notation to simplify the presentation. For a nonempty compact set $S \subset \mathbb{R}^d$ and a direction $p \in \mathbb{R}^d$, let
$x_S(p) \in \arg\max_{x \in S} p^Tx$
denote the maximizer of $S$ in direction $p$, and
$w_S^\star(p) \in \arg\min_{x \in S} p^Tx$
denote the minimizer in direction $p$.
\begin{lemma}[Directional stability of support points]
\label{lem:dir-stability}
Let $S \subset \mathbb{R}^d$ be nonempty, compact, and $\rho$-strongly convex. Then for every $p,q \in \mathbb{R}^{d}, \|p\|=\|q\|=1,$
the support point is unique and the map $p \mapsto x_S(p)$ is $\rho$-Lipschitz:
$
\|x_S(p) - x_S(q)\| \le \rho \, \|p-q\|
$.
Consequently, for any $c_1,c_2 \neq 0$,
\[
\|w_S^\star(c_1) - w_S^\star(c_2)\| \le \rho \left \|\frac{c_1}{\|c_1\|}-\frac{c_2}{\|c_2\|}\right\|.
\]
\end{lemma}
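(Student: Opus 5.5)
The proof uses only the ball-intersection structure of \cref{def:strongly-convex}. Write $S=\bigcap_{x\in X}B_\rho(x)$. The key geometric fact is this: for each unit direction $p$ and each support point $x_S(p)$, the ball $B_\rho\bigl(x_S(p)-\rho p\bigr)$ contains $S$. This is the standard ``supporting ball'' characterization of $\rho$-strongly convex sets (see \citet{polovinkin1996strongly}). I would either cite it or sketch it as follows: the normal cone of $S$ at a boundary point is generated by normals of the balls $B_\rho(x)$ that are active there. Taking the center on the ray $-p$ at distance $\rho$ gives a ball of radius $\rho$ that contains the intersection near that point, and by convexity it contains all of $S$. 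This step is the main obstacle, since it is where strong convexity is really used. If a self-contained argument is needed, I would first treat the case where $p$ is exactly the outward normal of a single active ball. I would then pass to the general case by a limiting and convex-combination argument on the centers.

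With the supporting-ball fact in hand, Lipschitzness follows from two inequalities. Put $x=x_S(p)$ and $y=x_S(q)$. Since $y\in S\subseteq B_\rho(x-\rho p)$,
\[
\|y-x+\rho p\|^2\le\rho^2 \quad\Longrightarrow\quad \|y-x\|^2\le 2\rho\,p^\top(x-y).
\]
Symmetrically, $\|x-y\|^2\le 2\rho\,q^\top(y-x)$. Adding the two gives
\[
2\|x-y\|^2\le 2\rho\,(p-q)^\top(x-y)\le 2\rho\|p-q\|\,\|x-y\|,
\]
and hence $\|x-y\|\le\rho\|p-q\|$.

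Uniqueness follows from the same inequality with $q=p$. If $x$ and $y$ are both maximizers in direction $p$, then $p^\top(x-y)=0$, so $\|x-y\|^2\le 0$ and $x=y$. The same bound also shows that the choice of oracle is irrelevant.

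For the consequence, observe that $\arg\min_{w\in S}c^\top w=\arg\max_{w\in S}(-c)^\top w$. By positive homogeneity this set equals $x_S\bigl(-c/\|c\|\bigr)$ for $c\ne 0$. Applying the Lipschitz bound to $p=-c_1/\|c_1\|$ and $q=-c_2/\|c_2\|$ gives
\[
\|w_S^\star(c_1)-w_S^\star(c_2)\|\le\rho\,\bigl\|c_1/\|c_1\|-c_2/\|c_2\|\bigr\|,
\]
which is the stated bound.
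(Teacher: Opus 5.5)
Your proof is correct, but it takes a different route from the paper. The paper's proof is essentially a citation. By \citet{polovinkin1996strongly} (Corollary 4), the support function $\xi_S$ of a $\rho$-strongly convex set is differentiable on $\mathbb{R}^d\setminus\{0\}$ with $\nabla\xi_S(p)=x_S(p)$, and this gradient is $\rho$-Lipschitz on the unit sphere. Uniqueness is then just differentiability, and the bound is the cited smoothness.

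You instead start from the supporting-ball inclusion $S\subseteq B_\rho(x_S(p)-\rho p)$, which is exactly the inequality the paper invokes to prove \cref{lem:QG}. You get uniqueness and the Lipschitz bound by adding the two quadratic-growth inequalities at $p$ and $q$. This buys two things. First, the lemma becomes a two-line corollary of \cref{lem:QG}, so the strongly convex section rests on one geometric fact rather than two separately cited ones. Second, your summed inequality gives the stronger co-coercivity $(p-q)^\top\bigl(x_S(p)-x_S(q)\bigr)\ge\rho^{-1}\|x_S(p)-x_S(q)\|^2$. The paper's route is shorter but outsources all of the content.

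You also have the sign right: $w_S^\star(c)=x_S(-c/\|c\|)$. The paper writes $x_S(c/\|c\|)$, which is harmless only because negating both directions leaves $\|p-q\|$ unchanged.

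One caution on your sketch of the supporting-ball fact. The step ``contains the intersection near that point, and by convexity it contains all of $S$'' is not a valid inference by itself. A convex set can lie inside a ball near a common boundary point and still leave it elsewhere. Your second outline is the one that works, and it closes quickly:
\begin{enumerate}
\item Replace $X$ by the compact set of all centers whose $\rho$-ball contains $S$.
\item For $S$ not a singleton, the max-rule for subdifferentials shows that the normal cone at a boundary point $s$ is generated by the active unit normals $n_i=(s-x_i)/\rho$.
\item Each active ball gives $\|y-s\|^2+2\rho\,(y-s)^\top n_i\le 0$ for all $y\in S$.
\item Write a unit normal as $p=\bar n/\|\bar n\|$, with $\bar n=\sum_i\lambda_i n_i$ a convex combination of the $n_i$. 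Averaging the inequalities and using $\|\bar n\|\le 1$ gives $\|y-s\|^2+2\rho\,(y-s)^\top p\le 0$, i.e.\ $S\subseteq B_\rho(s-\rho p)$.
\end{enumerate}
No limiting argument is needed once the center set is closed. Citing the characterization instead, as the paper effectively does for \cref{lem:QG}, is equally acceptable.
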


For the same feasible set $S$, \cref{lem:dir-stability} shows that the optimal decisions for different clients can only differ up to the difference in directions, scaled by the curvature. This is also intuitive through a geometric perspective: in the strongly convex case, the optimal point in a minimization problem is \emph{the projection of the objective vector} $-c_i$ \emph{onto the feasible set} $S$. Since optimal points cannot be found in the interior of the set, this distance cannot grow arbitrarily as in the general case. 
This is also why the normalization by the norm appears here: since the optimal point is projected back to the feasible set from the objective vector, its magnitude is irrelevant; only its direction affects the decision. Furthermore, due to \cref{ass:bounded} $\|c\| \neq 0$.

\begin{lemma}[Quadratic growth from curvature]
\label{lem:QG}
Let $S \subset \mathbb{R}^d$ be nonempty, compact, and $\rho$-strongly convex.
Fix $p \in \mathbb{R}^{d}, \|p\|=1,$ and let $x_S(p)$ be the (unique) support point $\nabla \xi_S(p) = x_S(p)$.
Then for every $x \in S$,
\[
\xi_S(p) - p^\top x \;\ge\; \frac{1}{2\rho}\,\|x - x_S(p)\|^2.
\]
\end{lemma}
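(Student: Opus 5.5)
The plan is to use \cref{def:strongly-convex} directly: $S=\bigcap_{y\in X}B_\rho(y)$. I will exhibit a single ball of radius $\rho$ that contains $S$ and has $x_S(p)$ on its boundary with outward normal $p$. Once that ball is in hand, the quadratic growth is an elementary computation on the sphere.

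\textbf{Step 1 (supporting ball).} Set $x_0:=x_S(p)$ and $y_0:=x_0-\rho p$. The claim is $S\subseteq B_\rho(y_0)$. This is the standard "supporting ball" property of $\rho$-strongly convex sets (cf.\ \citet{polovinkin1996strongly}): at every boundary point $x_0$ with unit normal $p$, $S$ lies in the ball of radius $\rho$ centered at $x_0-\rho p$.

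To derive it from the intersection representation, I would argue by compactness. The point $x_0$ lies on the boundary of $S$, and $p$ is an outer normal there. Since $S$ is an intersection of balls, for each $\varepsilon>0$ there is some $y_\varepsilon\in X$ with $x_0+\varepsilon p\notin B_\rho(y_\varepsilon)$ while $S\subseteq B_\rho(y_\varepsilon)$. The centers $y_\varepsilon$ lie in a bounded set (within $\rho$ of $x_0$), so they have a limit point $\bar y$. Limits of balls containing $S$ still contain $S$, so $S\subseteq B_\rho(\bar y)$, and $\|x_0-\bar y\|=\rho$.

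It remains to show that the outward normal of $B_\rho(\bar y)$ at $x_0$, namely $(x_0-\bar y)/\rho$, equals $p$. Here I use uniqueness of the support point from \cref{lem:dir-stability}, which makes the normal cone at $x_0$ a single ray. If the normal direction $n$ of the ball differed from $p$, then $S\subseteq B_\rho(\bar y)$ would give $n$ as a normal of $S$ at $x_0$ as well. I would rule this out by choosing $y_\varepsilon$ more carefully: take the ball that excludes $x_0+\varepsilon p$ while nearly achieving the supremum.

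This identification is the main obstacle. A cleaner alternative is to cite the known equivalence from Polovinkin: $S$ is $\rho$-strongly convex if and only if $S\subseteq B_\rho(x-\rho p)$ for every $x\in\partial S$ and every unit normal $p$ at $x$. I would take that route, remarking that it is standard.

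\textbf{Step 2 (computation).} For any $x\in S$, Step 1 gives $\|x-y_0\|^2\le\rho^2$. Write $x-y_0=(x-x_0)+\rho p$ and expand:
\[
\|x-x_0\|^2+2\rho\,p^\top(x-x_0)+\rho^2\le\rho^2 .
\]
This gives $p^\top(x_0-x)\ge\frac{1}{2\rho}\|x-x_0\|^2$. Since $p^\top x_0=\xi_S(p)$, this is exactly the claim.

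\textbf{Step 3 (gradient identification).} The identity $\nabla\xi_S(p)=x_S(p)$ follows from Danskin's theorem, because $\partial\xi_S(p)$ is the set of maximizers and that set is a singleton here.
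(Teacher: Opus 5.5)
Your final route, which cites Polovinkin's supporting-ball characterization $S\subseteq B_\rho(x_S(p)-\rho p)$ and expands $\|x-x_S(p)+\rho p\|^2\le\rho^2$, is exactly the paper's proof; the paper writes the supporting-ball inequality as $\|x-x_S(p)\|^2+2\rho(x-x_S(p))^\top p\le 0$. Citing is the right call, because the self-contained derivation you set aside fails as sketched: uniqueness of support points gives strict convexity, not a single-ray normal cone (the lens $B_\rho(a)\cap B_\rho(b)$ with $0<\|a-b\|<2\rho$ is $\rho$-strongly convex but has corners), and at such a corner, for $p$ strictly inside the normal cone, your limiting balls drawn from $X=\{a,b\}$ have outward normal $(x_0-a)/\rho$ or $(x_0-b)/\rho$, never $p$.
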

\cref{lem:QG} connects suboptimal points with their objective values.  Specifically, the difference in the objective function between the optimal decision and a suboptimal one grows quadratically with the difference in decision vectors, scaled again by the curvature. \cref{lem:set-stability} uses this to bound support points in different sets under the same direction.
\begin{lemma}[Set stability of support points under Hausdorff perturbations]
\label{lem:set-stability}
Let $S_1,S_2 \subset \mathbb{R}^d$ be nonempty and compact, and let
$\delta := d_H(S_1,S_2)$ be the Hausdorff distance induced by $\|\cdot \|$.
Assume $S_i$ is $\rho_i$-strongly convex for $i=1,2$.
Then for any $p \in \mathbb{R}^{d}, \|p\| = 1$,
\[
\|x_{S_1}(p) - x_{S_2}(p) \|
\;\le\;
\delta + 2\sqrt{\rho_{\min}\,\delta},
\]
where $\rho_{\min}:=\min\{\rho_1,\rho_2\}.$
\end{lemma}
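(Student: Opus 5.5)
We argue by comparing the two support points through the quadratic growth of \cref{lem:QG}. Write $x_i := x_{S_i}(p)$ for $i=1,2$; uniqueness holds by \cref{lem:dir-stability}. By symmetry, assume $\rho_1 = \rho_{\min}$. We apply \cref{lem:QG} in $S_1$, so the curvature constant entering the bound is $\rho_{\min}$.

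\textbf{Step 1 (project across the sets).} Since $d_H(S_1,S_2)=\delta$ and $S_1$ is compact, there is a point $y\in S_1$ with $\|y - x_2\|\le\delta$. The triangle inequality then gives
\[
\|x_1-x_2\|\le \|x_1-y\|+\|y-x_2\|\le \|x_1-y\| + \delta .
\]
So it suffices to show $\|x_1-y\|\le 2\sqrt{\rho_1\delta}$.

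\textbf{Step 2 (suboptimality gap of $y$ in $S_1$).} Apply \cref{lem:QG} in $S_1$ with the point $y$:
\[
\frac{1}{2\rho_1}\|y-x_1\|^2\le \xi_{S_1}(p)-p^\top y .
\]
We split the right-hand side as
\[
\xi_{S_1}(p)-p^\top y=\bigl(\xi_{S_1}(p)-\xi_{S_2}(p)\bigr)+\bigl(p^\top x_2 - p^\top y\bigr),
\]
using $\xi_{S_2}(p)=p^\top x_2$. The first bracket is at most $\|p\|\delta=\delta$ by \cref{lem:support-set}. The second is at most $\|p\|\|x_2-y\|\le\delta$ by Cauchy--Schwarz. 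Hence $\|y-x_1\|^2\le 4\rho_1\delta$, i.e.\ $\|y-x_1\|\le 2\sqrt{\rho_1\delta}$. Combined with Step 1, this gives $\|x_1-x_2\|\le \delta+2\sqrt{\rho_{\min}\delta}$.

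\textbf{Main check.} The only real point of care is the choice of which set to run quadratic growth in. Using the set with the smaller radius gives $\rho_{\min}$ directly. The argument is symmetric in the roles of $S_1$ and $S_2$: one can equally take $y\in S_2$ close to $x_1$. So we are free to choose the index attaining $\rho_{\min}$. Finally, \cref{lem:QG} requires $\|p\|=1$, which is assumed in the statement. No bound on $\|p\|$ beyond this is needed, since both error terms above carry a factor $\|p\|=1$.
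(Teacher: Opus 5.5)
Your proof is correct and follows essentially the same route as the paper: pick a point of one set within $\delta$ of the other set's support point, bound its suboptimality gap by $2\delta$ via \cref{lem:support-set} and Cauchy--Schwarz, apply \cref{lem:QG} in that set, and finish with the triangle inequality. The only cosmetic difference is that the paper runs the argument in $S_2$ and then swaps roles and takes the minimum, whereas you assume without loss of generality that $\rho_1=\rho_{\min}$ and run it once.
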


\cref{lem:set-stability} bounds support points with two terms. The first is the Hausdorff distance $\delta$ for the set mismatch, while the second term links the suboptimality of the support points for the other set to closeness in vector space through \cref{lem:QG}. Similarly to the general case, we can take advantage of the translation-invariance shown in \cref{lem:translation-invariance} to replace Hausdorff with shape distances $\delta_N$.
\sloppy
\begin{lemma}[Bounding linear functionals across two sets under strong convexity]
\label{lem:lemma4-sc}
Let $S_{\rm ref}, S_{\rm cl} \subset \mathbb{R}^d$ be nonempty and compact, and let
$\delta := d_H(S_{\rm ref}, S_{\rm cl})$ be the Hausdorff distance induced by $\|\cdot \|$.
Assume $S_{\rm ref}$ is $\rho_{\rm ref}$-strongly convex and $S_{\rm cl}$ is $\rho_{\rm cl}$-strongly convex.
For any $\hat c \in \mathbb{R}^d$,
\[
\Big|\hat c^\top\big(w^\star_{S_{\rm ref}}(c_{\rm ref}) - w^\star_{S_{\rm cl}}(c_{\rm cl})\big)\Big|
\;\le\;
\|\hat c \| \cdot \Gamma,
\]
where
$$ \Gamma = \rho_{\min} \left\|\frac{c_{\rm ref}}{\|c_{\rm ref}\|} - \frac{c_{\rm cl}}{\|c_{\rm cl}\|} \right\| + \delta + 2\sqrt{\rho_{\min}\delta}. $$
\end{lemma}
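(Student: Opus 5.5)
The plan is to split the decision-vector difference with a triangle inequality through an intermediate point: the minimizer of the client's cost direction over the reference set. Cauchy--Schwarz first gives
\[
\Big|\hat c^\top\big(w^\star_{S_{\rm ref}}(c_{\rm ref}) - w^\star_{S_{\rm cl}}(c_{\rm cl})\big)\Big| \le \|\hat c\|\,\big\|w^\star_{S_{\rm ref}}(c_{\rm ref}) - w^\star_{S_{\rm cl}}(c_{\rm cl})\big\|,
\]
so it suffices to show that the vector difference is at most $\Gamma$. We then write
\[
\big\|w^\star_{S_{\rm ref}}(c_{\rm ref}) - w^\star_{S_{\rm cl}}(c_{\rm cl})\big\|
\le \big\|w^\star_{S_{\rm ref}}(c_{\rm ref}) - w^\star_{S_{\rm ref}}(c_{\rm cl})\big\| + \big\|w^\star_{S_{\rm ref}}(c_{\rm cl}) - w^\star_{S_{\rm cl}}(c_{\rm cl})\big\|.
\]
The first piece changes the direction with the set fixed. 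The second changes the set with the direction fixed.

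For the first piece, we invoke \cref{lem:dir-stability} on $S_{\rm ref}$. \cref{ass:bounded} guarantees $c_{\rm ref},c_{\rm cl}\neq 0$. This gives the bound $\rho_{\rm ref}\,\| c_{\rm ref}/\|c_{\rm ref}\| - c_{\rm cl}/\|c_{\rm cl}\|\|$.

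For the second piece, note that $w^\star_S(c) = x_S(p)$ with $p = -c/\|c\|$ a unit vector, since minimizing $c^\top w$ is the same as maximizing $p^\top w$. Uniqueness of the support point under strong convexity (\cref{lem:dir-stability}) means the oracle choice is irrelevant. \cref{lem:set-stability} then bounds $\|x_{S_{\rm ref}}(p)-x_{S_{\rm cl}}(p)\|$ by $\delta + 2\sqrt{\rho_{\min}\delta}$.

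The main obstacle is that this decomposition yields $\rho_{\rm ref}$ in front of the direction term, whereas the statement has $\rho_{\min}$. I would handle this by running the symmetric decomposition through the other intermediate point, $w^\star_{S_{\rm cl}}(c_{\rm ref})$. That route applies \cref{lem:dir-stability} on $S_{\rm cl}$, giving $\rho_{\rm cl}$, and applies \cref{lem:set-stability} at direction $-c_{\rm ref}/\|c_{\rm ref}\|$, giving the same set term. Both bounds hold, so we may take the smaller of the two, which has coefficient $\min\{\rho_{\rm ref},\rho_{\rm cl}\}=\rho_{\min}$ on the direction term. Combining this with the Cauchy--Schwarz step gives $\|\hat c\|\,\Gamma$.

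A remaining check is that a $\rho$-strongly convex set is also $\rho'$-strongly convex for every $\rho'\ge\rho$. This is needed only if one wants to state the result with a single common radius.
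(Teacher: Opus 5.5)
Your proposal is correct and follows the paper's proof. You use the Cauchy--Schwarz reduction, then a triangle inequality through $w^\star_{S_{\rm ref}}(c_{\rm cl})$ and symmetrically through $w^\star_{S_{\rm cl}}(c_{\rm ref})$, with directional stability and set stability applied in each, and take the minimum of the two bounds to obtain the $\rho_{\min}$ coefficient. Your explicit sign convention $p=-c/\|c\|$ and the appeal to uniqueness for oracle-independence are slightly more careful than the paper; the closing remark about enlarging the radius is not needed.
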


\begin{corollary}[Strongly convex feasible sets]
\label{cor:thm1-sc}
In the setting of Theorem~1, additionally assume that strong convexity of the feasible set (\cref{ass:strong-convex-set}) 
holds for $S_{\rm ref}$ and $S_{\rm cl}$ (with radii $\rho_{\rm ref},\rho_{\rm cl}$). Define $T$ as the support/objective mismatch term from \cref{thm:main}, $T :=\min\left\{
  D_{\rm ref}\,c_d
  + \delta_N\left(\|c_{\rm cl} - 2\hat{c}\|+\|c_{\rm cl}\|\right), 
  D_{\rm cl}\,c_d
  + \delta_N\left(\|c_{\rm ref} - 2\hat{c}\|+\|c_{\rm ref}\|\right)
\right\}.$
Then Theorem~1 remains valid with $H(\hat c)$ replaced by
\[
\mathcal{H}_{\rm SC}(\hat c) := B_{\rm SC}(\hat c) + T,
\]
where
\[
B_{\rm SC}(\hat c) := 2\|\hat c\| \cdot \Gamma,
\]
and
$$ \Gamma = \rho_{\min} \left\|\frac{c_{\rm ref}}{\|c_{\rm ref}\|} - \frac{c_{\rm cl}}{\|c_{\rm cl}\|}\right\| + \delta_N + 2\sqrt{\rho_{\min}\delta_N}. $$
\end{corollary}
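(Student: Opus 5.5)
The plan is to re-run the proof of \cref{thm:main} while leaving its treatment of the support/objective terms unchanged. Only the decision-term estimate that came from \cref{lem:oracle-free} gets replaced, and the replacement comes from \cref{lem:lemma4-sc} applied after an optimal translation.

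\textbf{Step 1 (reduce to shape distance).} By \cref{lem:translation-invariance}, for any $v$ we have $\ell_{S_{\rm cl}}(\hat c,c_{\rm cl})=\ell_{S_{\rm cl}+v}(\hat c,c_{\rm cl})$. Translation also preserves the relevant geometry:
\begin{itemize}
\item strong convexity radii are unchanged, since $S_{\rm cl}+v=\bigcap_x B_{\rho_{\rm cl}}(x+v)$;
\item diameters are unchanged;
\item minimizers shift by $v$, i.e.\ $w^\star_{S+v}(c)=w^\star_S(c)+v$, with the oracle chosen consistently, which is automatic here because strong convexity makes the minimizer unique by \cref{lem:dir-stability}.
\end{itemize}
So it suffices to prove the bound with $\delta_v:=d_H(S_{\rm ref},S_{\rm cl}+v)$ in place of $\delta_N$ and then take the infimum over $v$. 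If the infimum is not attained, I will pass to a minimizing sequence and use that $\delta\mapsto \delta+2\sqrt{\rho_{\min}\delta}$ is continuous and increasing. Compactness in fact gives attainment, since $v\mapsto d_H$ is continuous and coercive.

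\textbf{Step 2 (decompose).} Write $\ell_S(\hat c,c)=[\xi_S(c-2\hat c)-z_S^\star(c)]+2\hat c^\top w_S^\star(c)$. The difference of the two losses then splits into a support/objective part and a decision part $2\hat c^\top(w^\star_{S_{\rm ref}}(c_{\rm ref})-w^\star_{S_{\rm cl}+v}(c_{\rm cl}))$.

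The support/objective part is bounded exactly as in \cref{thm:main}. Insert an intermediate term with a mixed set/cost pair, say $\xi_{S_{\rm ref}}(c_{\rm cl}-2\hat c)-z^\star_{S_{\rm ref}}(c_{\rm cl})$. The cost change on the fixed set $S_{\rm ref}$ is controlled by \cref{lem:diameter-joint}, giving $D_{\rm ref}c_d$. The set change at fixed cost is controlled by \cref{lem:support-set} and \cref{lem:zstar-set}, giving $\delta_v(\|c_{\rm cl}-2\hat c\|+\|c_{\rm cl}\|)$. The symmetric choice of intermediate term gives the other branch, and taking the minimum yields $T$ with $\delta_v$. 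Because this step is literally the argument of \cref{thm:main}, I will cite it rather than redo it.

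\textbf{Step 3 (decision term, the main step).} Apply \cref{lem:lemma4-sc} to $S_{\rm ref}$ and $S_{\rm cl}+v$, which are strongly convex with radii $\rho_{\rm ref}$ and $\rho_{\rm cl}$. This gives the bound $2\|\hat c\|\,\Gamma$, with $\Gamma$ evaluated at $\delta_v$. Adding Step 2 and minimizing over $v$ gives $\mathcal H_{\rm SC}(\hat c)$.

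The only delicate point is that \cref{lem:lemma4-sc} uses $\rho_{\min}$ with both the direction-change term and the set-change term. Its proof has to route through one fixed set: apply \cref{lem:dir-stability} on the set achieving $\rho_{\min}$, then use \cref{lem:set-stability} with direction $-c/\|c\|$, where the minimizer is the support point in direction $-c$. I will check that this ordering of the triangle inequality is legitimate, using the direction belonging to the other client on the set-change leg, so that the stated $\Gamma$ holds without an extra $\max\{\rho\}$. If that check fails, the fallback is to state the bound with the radius of whichever set is used for the direction step. Assumption \ref{ass:bounded} ensures $c_{\rm ref},c_{\rm cl}\neq 0$, so the normalizations are well defined.
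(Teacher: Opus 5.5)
Your proposal is correct and follows the paper's own proof: rerun \cref{thm:main}, use \cref{lem:lemma4-sc} in place of \cref{lem:oracle-free} for the decision term, and pass from Hausdorff to shape distance via \cref{lem:translation-invariance}. Your extra bookkeeping (radii, diameters and oracle consistency under translation, attainment of the infimum) is what the paper leaves implicit, and the ordering check you flag goes through, because \cref{lem:set-stability} already gives $\delta+2\sqrt{\rho_{\min}\delta}$ whichever set carries the direction-change leg, so the fallback is unnecessary.
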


\begin{proof}{Proof}
The proof is identical to Theorem~1, except that in Step~2 apply
\cref{lem:lemma4-sc} in place of \cref{lem:oracle-free}, and substitute Hausdorff distances with shape distances due to \cref{lem:translation-invariance}.
\end{proof}
\cref{lem:lemma4-sc} and \cref{cor:thm1-sc} replace the general decision shift term of \cref{thm:main} with a tighter version due to strong convexity. Now, as heterogeneity approaches zero, the heterogeneity term $\mathcal{H}_{\mathrm{SC}} \rightarrow 0$. These findings also align conceptually with \citet{liu2021risk}: under strong convexity, SPO+ is both well-behaved and better calibrated to SPO loss. Here, we also showed that strong convexity benefits federation, since under strong convexity, loss functions between clients degrade more regularly as heterogeneity increases.

\section{Federation Gain Bounds}
\label{sec:fed-gain}

\subsection{Derivation of the main bound}
We now apply the heterogeneity bounds developed above to analyze when federated learning improves over local learning in the decision-focused setting. We decompose the local versus global model risk difference into a statistical term (where federation helps) and a heterogeneity term (where client heterogeneity hurts). 

Let $m\in\mathbb{N}$ be the number of clients. Client $j\in\{1,\ldots,m\}$ holds $n_j$ i.i.d.\
samples from distribution $P_j$ over $\mathcal{X}\times\mathbb{R}^d$. Define the total sample
size $N:=\sum_{j=1}^m n_j$ and the mixture weights $\alpha_j := n_j/N$.
Each client $j$ has a nonempty compact feasible set $S_j\subset\mathbb{R}^d$ with Euclidean
diameter $D_j := D_{S_j}$. Let $D_{\max}:=\max_{j=1,\ldots,m} D_j$.
For the pairwise risk comparisons used in this Section, let $X \sim P_X$, where $P_X$ is the shared covariate distribution, while client-specific cost vectors $(C^{(1)},...,C^{(m)})$ are jointly distributed conditional on $X$ with marginal distributions $(X,C^{(j)}) \sim P_j$. Later in \cref{rem:optimal-transport} and \cref{sec:conclusions}, we discuss possible alternatives to this joint distribution definition.


Let $\mathcal{H}$ be a class of predictors $g:\mathcal{X}\to\mathbb{R}^d$.
We assume $\mathcal{H}$ has bounded outputs: there exists $B<\infty$ such that
$\|g(x) \| \le B$ for all $g\in\mathcal{H}$ and $x\in\mathcal{X}$.
For client $j$, define the \emph{population SPO+ risk} as
\[
R_j(g) := \mathbb{E}_{(x,c)\sim P_j}\bigl[\ell_{S_j}(g(x),c)\bigr],
\]
where $\ell_{S_j}$ is the SPO+ loss \eqref{eq:spo+} with feasible set $S_j$.
Define the \emph{federated mixture risk}:
\[
R_{\rm mix}(g) := \sum_{j=1}^m \alpha_j R_j(g),
\]
where $\alpha_j \geq 0, \sum_{j=1}^m \alpha_j=1$ are the client-mixture weights of client $j$. 
Let $\widehat{R}_j(g)$ and $\widehat{R}_{\rm mix}(g)$ denote the corresponding empirical risks.

Define the optimal predictors:
\[
g_j^\star \in \argmin_{g\in\mathcal{H}} R_j(g),
\qquad
g_{\rm mix}^\star \in \argmin_{g\in\mathcal{H}} R_{\rm mix}(g).
\]
For a class $\mathcal{H}$ of vector-valued functions $g:\mathcal{X}\to\mathbb{R}^d$ and $n$ samples,
the Rademacher complexity is
\[
\mathfrak{R}_n(\mathcal{H}) := \mathbb{E}\left[\sup_{g\in\mathcal{H}}
\frac{1}{n}\sum_{i=1}^n \sum_{k=1}^d \sigma_{ik}\, g_k(x_i)\right],
\]
where $\sigma_{ik}$ are independent Rademacher random variables.

\begin{lemma}[Uniform convergence for SPO+ risk]
\label{lem:uniform-convergence}
\citep{liu2021risk} For any distribution $P$ over $\mathcal{X}\times\mathbb{R}^d$ and feasible set $S$ with diameter $D_S$,
with probability at least $1-\delta$,
$$
\sup_{g\in\mathcal{H}} \bigl|R_S(g) - \widehat{R}_{S,n}(g)\bigr|
\;\le\;
4\sqrt{2}\,D_S\,\mathfrak{R}_n(\mathcal{H}) + b\sqrt{\frac{2\log(1/\delta)}{n}},
$$
where $b = \sup_{\hat{c} \in \mathcal{H}(\mathcal{X}), c \in C} \ell(\hat{c},c) \le D_SC_{\max} + 2 D_S \sup_{g\in \mathcal{H},x\in \mathcal{X}} \|g(x)\| $ for bounded true cost vector norm $\|c\| \leq C_{\max} < \infty.$
\end{lemma}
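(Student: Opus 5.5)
The plan is the standard symmetrization-plus-contraction argument of \citet{liu2021risk}, specialized to the SPO+ loss through its support-function representation. Write $\ell_S(\hat c,c)=\xi_S(c-2\hat c)+2\hat c^\top w_S^\star(c)-z_S^\star(c)$.

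\textbf{Step 1: bounded differences.} First we check that the loss is bounded over the relevant range. Since $z_S^\star(c)=c^\top w_S^\star(c)$, we can rewrite
\[
\ell_S(\hat c,c)=\max_{w\in S}(c-2\hat c)^\top\bigl(w-w_S^\star(c)\bigr).
\]
This quantity is nonnegative and at most $\|c-2\hat c\|\,D_S\le D_S(C_{\max}+2B)$. This gives the constant $b$ in the statement.

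McDiarmid's inequality then applies to $\Phi=\sup_{g}(R_S(g)-\widehat R_{S,n}(g))$ and to its negative. Replacing one sample changes $\Phi$ by at most $b/n$. Hence, with probability at least $1-\delta$, $\Phi\le\mathbb{E}\Phi+b\sqrt{\log(1/\delta)/(2n)}$. A union bound over the two sides only worsens the constant, and it is absorbed into the stated $b\sqrt{2\log(1/\delta)/n}$.

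\textbf{Step 2: symmetrization.} A standard ghost-sample argument gives
\[
\mathbb{E}\Phi\le 2\,\mathbb{E}\sup_g\frac1n\sum_i\sigma_i\,\ell_S\bigl(g(x_i),c_i\bigr).
\]

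\textbf{Step 3: Lipschitz property of the loss in $\hat c$.} For fixed $c$, the map $\hat c\mapsto\ell_S(\hat c,c)$ is the maximum over $w\in S$ of functions affine in $\hat c$ with gradient $-2(w-w_S^\star(c))$. It is therefore Lipschitz in $\|\cdot\|_2$ with constant $2\sup_{w\in S}\|w-w_S^\star(c)\|\le 2D_S$.

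\textbf{Step 4: vector contraction.} The vector-contraction inequality of Maurer (2016) states that for $L$-Lipschitz $h_i:\mathbb{R}^d\to\mathbb{R}$,
\[
\mathbb{E}\sup_g\sum_i\sigma_i h_i\bigl(g(x_i)\bigr)\le\sqrt2\,L\;\mathbb{E}\sup_g\sum_i\sum_k\sigma_{ik}\,g_k(x_i).
\]
We apply it with $h_i=\ell_S(\cdot,c_i)$ and $L=2D_S$. Combined with the factor $2$ from Step 2, this yields $2\cdot\sqrt2\cdot 2D_S\,\mathfrak{R}_n(\mathcal H)=4\sqrt2\,D_S\,\mathfrak{R}_n(\mathcal H)$, which matches the statement.

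The main obstacle is Step 4. The scalar Talagrand contraction does not apply directly because the loss depends on the vector $g(x_i)$. The vector version is exactly what produces the $\sqrt2$ and the definition of $\mathfrak R_n$ with the doubly indexed $\sigma_{ik}$. A secondary point to check is that the per-sample Lipschitz constant must hold uniformly in $c_i$, which the diameter bound in Step 3 guarantees. Since this is quoted from \citet{liu2021risk}, one may alternatively cite their result directly after verifying that $b$ and the Lipschitz constant match.
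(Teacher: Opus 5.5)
Your proposal is correct and follows the same route as the paper's much terser proof: the $2D_S$-Lipschitzness of SPO+ in the prediction, Maurer's vector contraction, symmetrization, and bounded-difference concentration, where you fill in the details the paper delegates to Liu and Grigas. Your max-of-affine-functions Lipschitz argument is equivalent to the paper's subgradient bound $\|2(w_S^\star(c)-w_S^\star(2\hat c-c))\|\le 2D_S$. One small caveat: the two-sided union bound gives $b\sqrt{\log(2/\delta)/(2n)}$, which is dominated by the stated $b\sqrt{2\log(1/\delta)/n}$ only when $\delta\le 2^{-1/3}$, so your ``absorption'' claim needs that (harmless) restriction on $\delta$.
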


Define the local and federated statistical complexity terms:
\[
\varepsilon_j(n_j,\delta) := 4\sqrt{2}\,D_j\,\mathfrak{R}_{n_j}(\mathcal{H})
+ b\sqrt{\frac{2\log(1/\delta)}{n_j}},
\]
\[
\varepsilon_{\rm mix}(N,\delta) := 4\sqrt{2}\,D_{\max}\,\mathfrak{R}_N(\mathcal{H})
+ b\sqrt{\frac{2\log(1/\delta)}{N}}.
\]

The key quantity measuring heterogeneity is the following term, which we refer to as Client--mixture discrepancy.
For client $j$, define
\begin{equation}
\label{def:discrepancy}
\Delta_j := \sup_{g\in\mathcal{H}} \bigl|R_j(g) - R_{\rm mix}(g)\bigr|.
\end{equation}
This quantity captures how misaligned client $j$'s objective is with the federated mixture
objective. 

We now present the main result of this section, which in essence states that the federated model helps client $j$ if pooled data sample complexity gains $(\varepsilon_j - \varepsilon_{\mathrm{mix}})$ exceed the client’s heterogeneity penalty $\Delta_j$.
\begin{theorem}[Local-Federated Excess Risk Comparison]
\label{thm:federation-gain}
Let $\hat g_j^{\rm loc}$ be the ERM on client $j$:
\[
\hat g_j^{\rm loc} \in \argmin_{g\in\mathcal{H}} \widehat{R}_j(g),
\]
and let $\hat g^{\rm fed}$ be the ERM on the pooled mixture objective:
\[
\hat g^{\rm fed} \in \argmin_{g\in\mathcal{H}} \widehat{R}_{\rm mix}(g).
\]
With probability at least
$1-\delta$ over all data:

\textbf{(Local bound)}
\begin{equation}
\label{eq:local-bound}
R_j(\hat g_j^{\rm loc}) - R_j(g_j^\star) \;\le\; 2\,\varepsilon_j\!\left(n_j,\frac{\delta}{2}\right).
\end{equation}

\textbf{(Federated bound)}
\begin{equation}
\label{eq:fed-bound}
R_j(\hat g^{\rm fed}) - R_j(g_j^\star) \;\le\; 2\,\varepsilon_{\rm mix}\!\left(N,\frac{\delta}{2}\right) + 2\Delta_j.
\end{equation}
\end{theorem}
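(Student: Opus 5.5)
We prove both bounds with the standard ERM excess-risk decomposition, driven by two uniform-convergence events from \cref{lem:uniform-convergence}, each at confidence level $\delta/2$.

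The first event $E_{\rm loc}$ is uniform convergence on client $j$'s sample of size $n_j$ with feasible set $S_j$:
\[
\sup_{g\in\mathcal H}\bigl|R_j(g)-\widehat R_j(g)\bigr|\le \varepsilon_j(n_j,\delta/2).
\]
The second event $E_{\rm mix}$ is uniform convergence for the pooled empirical risk:
\[
\sup_{g\in\mathcal H}\bigl|R_{\rm mix}(g)-\widehat R_{\rm mix}(g)\bigr|\le \varepsilon_{\rm mix}(N,\delta/2).
\]
A union bound then gives probability at least $1-\delta$ that both events hold.

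\textbf{Local bound.} On $E_{\rm loc}$ we write
\[
R_j(\hat g_j^{\rm loc})-R_j(g_j^\star)=\bigl[R_j(\hat g_j^{\rm loc})-\widehat R_j(\hat g_j^{\rm loc})\bigr]+\bigl[\widehat R_j(\hat g_j^{\rm loc})-\widehat R_j(g_j^\star)\bigr]+\bigl[\widehat R_j(g_j^\star)-R_j(g_j^\star)\bigr].
\]
The middle bracket is $\le 0$ by ERM optimality, and each outer bracket is at most $\varepsilon_j$. This yields \eqref{eq:local-bound}.

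\textbf{Federated bound.} On $E_{\rm mix}$ we insert the mixture risk:
\[
R_j(\hat g^{\rm fed})-R_j(g_j^\star)=\bigl[R_j(\hat g^{\rm fed})-R_{\rm mix}(\hat g^{\rm fed})\bigr]+\bigl[R_{\rm mix}(\hat g^{\rm fed})-R_{\rm mix}(g_j^\star)\bigr]+\bigl[R_{\rm mix}(g_j^\star)-R_j(g_j^\star)\bigr].
\]
The first and last brackets are each at most $\Delta_j$ by definition \eqref{def:discrepancy}, since $\hat g^{\rm fed},g_j^\star\in\mathcal H$. The middle bracket is a mixture excess risk relative to the comparator $g_j^\star$, and we apply the same three-term ERM argument to $R_{\rm mix}$ and $\widehat R_{\rm mix}$. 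This gives at most $2\varepsilon_{\rm mix}$, because $\widehat R_{\rm mix}(\hat g^{\rm fed})\le\widehat R_{\rm mix}(g_j^\star)$. Summing the three brackets gives \eqref{eq:fed-bound}.

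\textbf{Main obstacle.} The delicate step is justifying $E_{\rm mix}$ with the constant $\varepsilon_{\rm mix}(N,\delta/2)$. The pooled sample is not i.i.d.\ from a single distribution: it consists of $n_j$ draws from each $P_j$, with client-dependent losses $\ell_{S_j}$. We would handle this by viewing $\widehat R_{\rm mix}=\frac1N\sum_j\sum_{i\le n_j}\ell_{S_j}(g(x_{ji}),c_{ji})$ as an average of $N$ independent, non-identically distributed terms whose expectation is $R_{\rm mix}$. We then rerun the proof of \cref{lem:uniform-convergence} on this average, which has two parts.
\begin{itemize}
\item McDiarmid's inequality for independent, non-identical terms gives the $b\sqrt{2\log(2/\delta)/N}$ deviation, where $b$ bounds all the losses, using $D_j\le D_{\max}$.
\item Symmetrization works for independent non-identical samples. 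Each $\ell_{S_j}(\cdot,c)$ is $2D_j\le 2D_{\max}$-Lipschitz in $\hat c$, so the vector contraction inequality yields $4\sqrt2 D_{\max}\mathfrak R_N(\mathcal H)$.
\end{itemize}
Here $\mathfrak R_N$ is read as the Rademacher complexity over the pooled covariates. Under the shared covariate distribution $P_X$ assumed in this section, this coincides with the definition given. If a cleaner route is preferred, we would state that the lemma is applied "analogously" to the pooled sample with $D_{\max}$ in place of $D_S$.
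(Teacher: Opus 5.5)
Your proposal is correct and follows essentially the paper's argument. Both use two uniform-convergence events at level $\delta/2$ joined by a union bound, the three-term ERM argument for the local bound, and for the federated bound a sandwich of the mixture excess risk between two applications of $\Delta_j$. The differences are minor and slightly in your favor: you take $g_j^\star$ directly as the comparator via $\widehat R_{\rm mix}(\hat g^{\rm fed})\le \widehat R_{\rm mix}(g_j^\star)$ instead of passing through $g^\star_{\rm mix}$ and its optimality, and you justify the pooled uniform-convergence step for the stratified, non-identically distributed sample with client-dependent losses, which the paper only asserts by applying the lemma ``to the mixture distribution with sample size $N$''.
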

The proof of \cref{thm:federation-gain} follows from the ERM argument $R(\hat{g}) \leq \widehat{R}(\hat{g}) + \varepsilon_j$ (see \citet{shalev2014understanding}) and \cref{lem:uniform-convergence}. A formal proof can be found in Appendix B of the online companion. Additional connections between the SPO+ excess-risk comparison, estimation-based federation gain, and SPO regret are provided in Appendix B.

\begin{remark}[The local-federated tradeoff]
\label{cor:when-federation-helps}
\cref{thm:federation-gain} separates the local versus federated comparison into two terms: a statistical term, which favors federation through the larger pooled sample size, and a heterogeneity term, which penalizes mismatches between client $j$'s risk and the mixture risk. The federated guarantee is tighter for client $j$ whenever
\begin{equation}
\label{eq:federation-condition}
\Delta_j \;<\; \varepsilon_j(n_j,\delta/2) - \varepsilon_{\rm mix}(N,\delta/2).
\end{equation}
That is, federation is most attractive when the pooled-data statistical advantage for client $j$ exceeds the optimization heterogeneity penalty.
\end{remark}

Intuitively, $\Delta_j$ measures how 'atypical' client $j$ is compared to the pooled population. If it looks like the mixture, $\Delta_j$ is small and the pooled statistical advantage will dominate, proportional to the gain in statistical complexity.

\begin{proposition}[Bounding $\Delta_j$ via heterogeneity]
\label{prop:delta-bound}
Suppose all clients share a common feature distribution, and let
$\mathcal{H}_{ij}(\hat c)$ denote the fixed-prediction discrepancy bound from \cref{thm:main}
between clients $i$ and $j$. Then
\[
\Delta_j \;\le\; \sum_{i=1}^m \alpha_i \sup_{g\in\mathcal{H}}
\mathbb{E}_{(x,C^{(i)},C^{(j)})\sim P_{i j}}\bigl[\mathcal{H}_{ij}(g(x))\bigr].
\]

\end{proposition}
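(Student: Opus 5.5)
The plan is to write $R_j(g)-R_{\rm mix}(g)$ as a convex combination of pairwise risk gaps, couple each pair of clients through the shared covariate, and then apply \cref{thm:main} pointwise. Since $\sum_i\alpha_i=1$, we have
\[
R_j(g)-R_{\rm mix}(g)=\sum_{i=1}^m \alpha_i\bigl(R_j(g)-R_i(g)\bigr),
\]
and the term $i=j$ vanishes. The triangle inequality then gives $|R_j(g)-R_{\rm mix}(g)|\le\sum_i\alpha_i|R_j(g)-R_i(g)|$ for each fixed $g\in\mathcal H$.

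The coupling step uses the joint-distribution convention set up at the start of \cref{sec:fed-gain}. We draw $X\sim P_X$, which is common to all clients by the shared-feature assumption, and then draw $(C^{(i)},C^{(j)})$ jointly given $X$, with marginals $(X,C^{(i)})\sim P_i$ and $(X,C^{(j)})\sim P_j$; call the resulting law $P_{ij}$. Each risk depends only on its own marginal, so we may write
\[
R_j(g)-R_i(g)=\mathbb{E}_{P_{ij}}\bigl[\ell_{S_j}(g(X),C^{(j)})-\ell_{S_i}(g(X),C^{(i)})\bigr].
\]
Jensen's inequality ($|\mathbb E Z|\le\mathbb E|Z|$) moves the absolute value inside the expectation. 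We then apply \cref{thm:main} pointwise with $\hat c=g(X)$, $S_{\rm ref}=S_j$, $c_{\rm ref}=C^{(j)}$, $S_{\rm cl}=S_i$ and $c_{\rm cl}=C^{(i)}$. This bounds the integrand by $\mathcal H_{ij}(g(X))$, where $c_d=\|C^{(i)}-C^{(j)}\|$ is now random.

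Next we take the supremum over $g$. For fixed $g$,
\[
|R_j(g)-R_{\rm mix}(g)|\le\sum_i\alpha_i\,\mathbb E_{P_{ij}}[\mathcal H_{ij}(g(X))]\le\sum_i\alpha_i\sup_{g'\in\mathcal H}\mathbb E_{P_{ij}}[\mathcal H_{ij}(g'(X))].
\]
The right-hand side does not depend on $g$, so taking $\sup_g$ on the left yields the stated bound on $\Delta_j$. The step uses only that a supremum of a sum is at most the sum of the suprema.

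The main obstacle is measurability and integrability. We need $\mathcal H_{ij}(g(X))$ to be integrable, and we need the loss expectations to be finite so the risk difference can be split. Both follow from \cref{ass:bounded} and \cref{ass:compact}, because all diameters and $\delta_N$ are finite and $\|g(x)\|\le B$, $\|C\|\le C_{\max}$. The pointwise $\min$ in $\mathcal H_{ij}$ is a measurable function of $(g(X),C^{(i)},C^{(j)})$, so no extra care is needed there.

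A subtle point is the symmetry of \cref{thm:main}'s roles of ref and client. This is harmless because $\mathcal H_{ij}$ is symmetric in $(i,j)$: the $\min$ covers both orderings, and $\delta_N$ is symmetric under translation. Finally, the bound holds for any coupling $P_{ij}$ with the right marginals, which is what later allows optimizing the coupling in \cref{rem:optimal-transport}.
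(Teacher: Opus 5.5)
Your proposal is correct and follows essentially the same route as the paper. You write $R_j(g)-R_{\rm mix}(g)=\sum_i\alpha_i\bigl(R_j(g)-R_i(g)\bigr)$, apply \cref{thm:main} pointwise with the shared prediction $\hat c=g(x)$ under the common-covariate coupling, take expectations, and then take the supremum over $g$. Your version only adds what the paper's short proof leaves implicit: the explicit coupling $P_{ij}$, the step $|\mathbb{E}Z|\le\mathbb{E}|Z|$, the symmetry of $\mathcal{H}_{ij}$, and the integrability of the loss.
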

The proof can be found in Appendix B.

Because SPO+ is a surrogate for SPO regret, existing calibration results imply corresponding SPO-regret consequences, with stronger transfer under strongly convex feasible regions. We summarize these extensions in the Appendix.

\begin{remark}[Heterogeneous feature distributions via optimal transport]
\label{rem:optimal-transport}
\cref{sec:fed-gain}'s definitions specify a shared covariate distribution for clients. More generally, one could optimize over distribution couplings, and taking the infimum over all valid couplings would yield a bound in terms of an optimal-transport distance between client distributions. However, determining the appropriate conditions for $\mathcal{H}$ and $P_i$ is not simple, due to the dependency on the prediction $g(x)$ and $c^{(i)}, c^{(j)}, S_i, S_j$, but is nevertheless an interesting direction that would link DFFL to covariate-shift literature in FL, which we discuss further in \cref{sec:conclusions}.
\end{remark}

\section{Computational Experiments}
\label{sec:experiments}

\subsection{Implementation details}

We follow the synchronous \texttt{FedAvg} \citep{mcmahan2017communication} round or episode structure, and the original \texttt{FedAvg} procedure and notation (see Algorithms 1-3 in the online companion). We replace the local loss function with $\ell^{\text{SPO+}}$ and local gradients with the decision-focused update \citep{elmachtoub2022smart}. We use the Adam optimizer ($\eta = 10^{-3}$) with gradient clipping ($c = 1$) for local updates. We then aggregate client updates by sample counts, as in the original algorithm. 
For each configuration, we train both federated and local models for each client using a shallow neural network structure with ReLU activation function ($x\rightarrow8\rightarrow64\rightarrow50\rightarrow\ell_2 \text{clip}(\tau)\rightarrow \hat{c}$). We apply $\ell_2$-norm clipping on the output vector, by multiplying the model output $\hat{c}$ by $\min(1, \frac{\tau}{\|\hat{c} \| + \epsilon})$, where $\epsilon>0$ and $\tau=20$. We set the number of clients to 20.
All experiments were implemented in \texttt{Julia} \citep{bezanson2017julia}. We employ the \texttt{Lux.jl} framework to construct and train the neural networks \citep{pal2023lux} and we compute parameter gradients using the \texttt{Zygote.jl} reverse mode automatic differentiation system \citep{innes2019differentiable}. 

We perform experiments across two downstream optimization problems: a fractional knapsack model representing the polyhedral case, and an entropy-bound portfolio optimization model as a strongly convex example. 
The
\emph{fractional knapsack} follows \citet{ho2022risk}. Here, each client $j$ 
solves
$$
\min_{w \in[0,1]^m} \Big\{ (-c_j)^Tw\,  \Big|\ a^Tw\leq B_j \Big \},
$$
where $a$ are the item weights, and $B_j$ is the client's total allocated budget. 
\emph{Entropy-constrained portfolio} problem follows \citet{liu2021risk}, where it was used to evaluate the performance of the original SPO+. In this case, each client $j$ 
solves
$$
\min_{w \in \mathbb{R}^m} \Big \{c_j^Tw \, \Big | \, w \geq 0, \sum_{i=1}^m w_i = 1, \sum_{i=1}^m w_i \log w_i \leq r_j\Big \},
$$
where $r_j$ is the threshold of entropy of portfolio $w$ for client $j$. It can be seen that the formulation is strongly convex when restricted to the simplex $\sum_{i=1}^m w_i = 1$.

Both formulations allow for a direct polynomial solution, which is necessary for us to be able to perform the required scale of experiments. The fractional knapsack is solved with the greedy approach, by simply sorting the items according to profit-to-weight ratio  and adding them until reaching the allowed budget. The entropy-constrained portfolio optimization solution can be obtained by solving the KKT conditions, which result in 
$$
x_i = \frac{e^{-\frac{c_i}{b}}}{\sum_j^d e^{-\frac{c_j}{b}}},
$$
where $d = |c|$, which can be solved by bisection, resulting in an approximation of the globally optimal solution. Bisection accuracy threshold is set to $10^{-10}$.

We compare four methods. 
\emph{Local DFL} is a traditional SPO+ model trained on local data only, separately for each client (in some cases, we introduce heterogeneity in data availability). \emph{Fed-DFFL} is the pure version of the proposed approach, whereby a FedAvg algorithm based on SPO+ loss is trained collaboratively. In addition, we also consider two more implementations of DFFL, \emph{Interp-DFFL(SPO+)} and \emph{Interp-DFFL(MSE)}, that interpolate between the pure local and federated model, described in detail next. 

\subsection{Interp-DFFL: local-federated personalization for DFL}
\label{sec:interp-dffl}

The bounds in \cref{sec:fed-gain} suggest that local and federated models should not be viewed as an all-or-nothing choice. Clients close to the mixture population should benefit from the statistical advantage of federation, while clients that are outliers in either objective distribution or feasible set may be harmed by a purely federated model. At the same time, since only upper bounds are available, it may not be possible to precisely separate these cases for a particular client. We therefore consider a simple post-training personalization step that interpolates between each client's local and DFFL predictors, following, for example, \citet{deng2020adaptive}.

For client $j$, define the interpolated model as
\begin{equation}
    \hat{g}^{\mathrm{int}}_{j,\lambda}(x)
    :=
    (1-\lambda)\hat{g}^{\mathrm{loc}}_j(x)
    +
    \lambda \hat{g}^{\mathrm{fed}}(x),
    \qquad \lambda \in [0,1].
    \label{eq:interp-predictor}
\end{equation}
The client then selects
\begin{equation}
    \hat{\lambda}_j \in
    \underset{\lambda \in \Lambda}{\arg\min}
    \frac{1}{|V_j|}
    \sum_{(x,c) \in V_j}
    \ell^{S_j}_{\mathrm{SPO+}}
    \left(\hat{g}^{\mathrm{int}}_{j,\lambda}(x), c\right),
    \label{eq:interp-lambda}
\end{equation}
where $V_j$ is a client-specific validation set and $\Lambda \subseteq [0,1]$ is a finite grid containing $0$ and $1$. We refer to this procedure as Interp-DFFL(SPO+). For comparison, we also evaluate Interp-DFFL(MSE), which selects $\lambda_j$ using validation mean squared error (MSE) while keeping the same SPO+-trained local and federated base models.
For the experiments, we use an 80/20 validation split only for the interpolation methods, while the pure local and federated methods have access to the entire training set. We select $\Lambda$ as all values between 0 and 1 with a 0.05 step size.

Since $\Lambda$ contains the endpoints, the selected interpolated model is no worse than the local or federated model on the client validation objective. However, relative to the pure local and federated baselines, interpolation introduces an additional model-selection step, since each client must reserve data for validation, and the selected $\hat{\lambda}_j$ may be noisy when the validation set is small or unrepresentative. Interp-DFFL therefore offers a trade-off. Potentially worse individual performance, but better performance at the federated level, since each client can select their own mixture. Throughout our experiments, $\lambda = 1$ denotes a fully federated weighting, and $\lambda = 0$ a local weighting.

\subsection{Synthetic experiments: local, federated, and interpolated behavior}
\label{subsec:client-hetero}
We first evaluate DFFL on synthetically generated data. Heterogeneity is introduced along two axes: \emph{objective heterogeneity}, through client-specific perturbations of the feature-to-cost mapping $x \mapsto c$, and \emph{constraint heterogeneity}, through client-specific feasible-set parameters, namely budgets in the knapsack problem and entropy thresholds in the portfolio problem. We also vary statistical heterogeneity through sample-size imbalance. In the balanced setting, all clients have the same sample size. In the imbalanced setting, half of the clients have ten times fewer samples. The data-generating process is described in the online companion in detail.

\subsubsection{Baseline local-federated behavior}
\begin{figure}[ht]
    \centering
    \begin{subfigure}{0.48\textwidth}
        \centering
        \includegraphics[width=\linewidth]
        {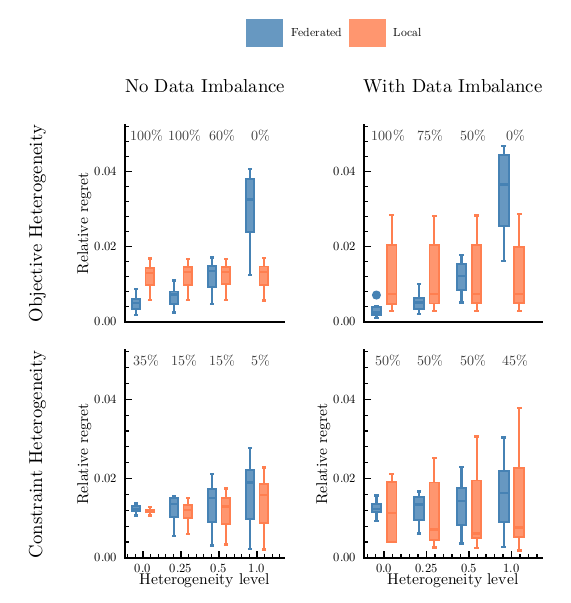}
        \caption{Federated versus local performance for the knapsack experiment.}
        \label{fig:knapsack-regret}
    \end{subfigure}
    \hfill
    \begin{subfigure}{0.48\textwidth}
        \centering
        \includegraphics[
            width=0.81\linewidth,
            trim=1.6cm 0 0 0, 
            clip
        ]
        {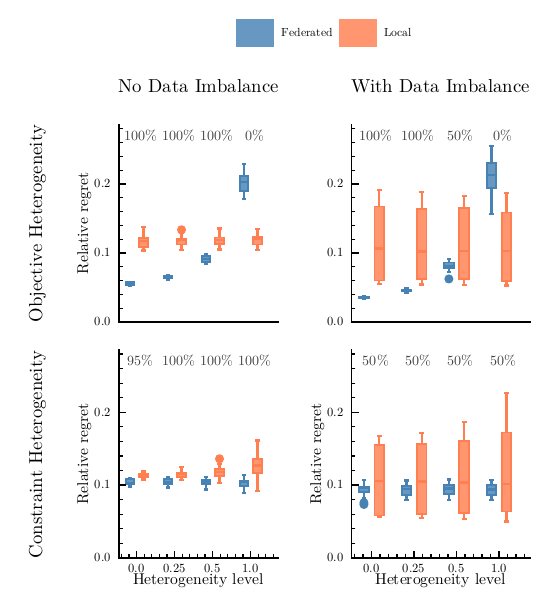}
        \caption{Federated versus local performance for the portfolio experiment.}
        \label{fig:portfolio-regret}
    \end{subfigure}
    \caption{Boxplots show the distribution of client-level relative regret for local and federated DFFL across heterogeneity levels and data imbalance for the two synthetic problems. Percentages indicate the fraction of clients for whom the federated model achieves lower regret, averaged across 5 seeds.}
    \label{fig:regret-combined}
\end{figure}

We first focus on the pure local and pure federated baselines in \cref{fig:regret-combined}. This isolates the empirical version of the tradeoff in Theorem 2 before considering interpolation.
\cref{fig:regret-combined} presents the mean regret trends across heterogeneity levels for all methods for federated (blue) and local (orange) cases.
In the fractional knapsack problem, whose feasible set is polyhedral, federation is sensitive to optimization heterogeneity. Objective heterogeneity is especially damaging: local models are largely unaffected because each client learns its own feature-to-cost map, while the federated model deteriorates as clients diverge. Constraint heterogeneity mainly increases cross-client variance, but it can further amplify the effect of objective heterogeneity. Under sample-size imbalance, data-poor clients benefit more from federation when heterogeneity is low or moderate, whereas data-rich clients switch toward local training sooner (this is particularly evident in the client-level plots presented in the Appendix.

The entropy-constrained portfolio problem in \cref{fig:regret-combined} behaves differently. Consistent with the strongly convex stability results in \cref{sec:strongly-convex}, federated models remain substantially more robust, especially under constraint heterogeneity. Federation degrades mainly under large objective heterogeneity, while constraint heterogeneity alone has a much smaller effect than in the polyhedral knapsack case. Overall, the synthetic baseline experiments support the central tradeoff presented in \cref{sec:fed-gain}: pooling helps statistically, but optimization heterogeneity can misalign the pooled model with individual clients.

\subsubsection{Interp-DFFL behavior}
\label{subsubsec:interp-dffl}

\begin{table}[t]
\centering
\small
\caption{Aggregate relative regret (\%) summary by experiment and method. Statistics are averaged over seeds and reported with inter-seed standard deviation. Mean, median, max, 75th and 90th percentiles are reported. "E" denotes the experiment, where K is the knapsack, and P is the portfolio. Lower is better $\downarrow$.}
\label{tab:aggregate-performance}
\begin{tabular}{lllllll}
\toprule
E & method & mean & median & p75 & p90 & max \\
\midrule
K & Local                & $1.21 \pm 0.15$ & $1.10 \pm 0.14$ & $1.73 \pm 0.20$ & $2.41 \pm 0.31$ & $6.34 \pm 1.45$ \\
K & Federated            & $1.37 \pm 0.18$ & $0.65 \pm 0.15$ & $1.83 \pm 0.31$ & $3.73 \pm 0.52$ & $11.70 \pm 0.83$ \\
K & Interp-DFFL(MSE)     & $1.10 \pm 0.14$ & $0.71 \pm 0.10$ & $1.37 \pm 0.16$ & $2.48 \pm 0.30$ & $10.36 \pm 2.04$ \\
K & Interp-DFFL(SPO+)    & $\mathbf{0.77 \pm 0.12}$ & $\mathbf{0.51 \pm 0.08}$ & $\mathbf{1.04 \pm 0.16}$ & $\mathbf{1.78 \pm 0.29}$ & $\mathbf{6.31 \pm 1.37}$ \\
P & Local                & $11.54 \pm 0.49$ & $11.29 \pm 0.23$ & $13.84 \pm 0.51$ & $17.19 \pm 1.54$ & $33.57 \pm 3.01$ \\
P & Federated            & $9.84 \pm 0.32$ & $7.02 \pm 0.37$ & $10.76 \pm 0.45$ & $22.00 \pm 0.96$ & $31.29 \pm 2.88$ \\
P & Interp-DFFL(MSE)     & $8.89 \pm 0.18$ & $7.65 \pm 0.32$ & $9.87 \pm 0.25$ & $15.72 \pm 0.61$ & $25.81 \pm 2.49$ \\
P & Interp-DFFL(SPO+)    & $\mathbf{7.84 \pm 0.26}$ & $\mathbf{6.86 \pm 0.31}$ & $\mathbf{9.46 \pm 0.27}$ & $\mathbf{12.69 \pm 0.55}$ & $\mathbf{21.78 \pm 3.39}$ \\
\bottomrule
\end{tabular}
\end{table}

In addition, we compare the interpolation methods with the pure baselines in \cref{fig:methods_overall_het,tab:aggregate-performance}. \cref{fig:methods_overall_het} shows that interpolation roughly matches the best of the two pure implementations, and even outperforms both federated and local models in the moderate heterogeneity regimes. \cref{tab:aggregate-performance} shows that Interp-DFFL(SPO+) improves aggregate performance in both synthetic domains. In knapsack, it achieves the lowest mean relative regret, $0.77\%$, compared with $1.21\%$ for local training and $1.37\%$ for federated training. In portfolio, it achieves $7.84\%$, compared with $11.54\%$ for local and $9.84\%$ for federated. The improvement is especially visible in the tail: in the portfolio case, the 90th percentile relative regret falls from $22.00\%$ under federation to $12.69\%$ under Interp-DFFL(SPO+). The comparison with Interp-DFFL(MSE) also supports the decision-focused validation criterion: selecting the interpolation weight by SPO+ validation loss yields lower mean and tail regret than selecting it by MSE, even though both variants use the same SPO+ trained base models.

\begin{figure}
    \centering
    \begin{minipage}{0.48\textwidth}
        \centering
        \includegraphics[width=\linewidth]{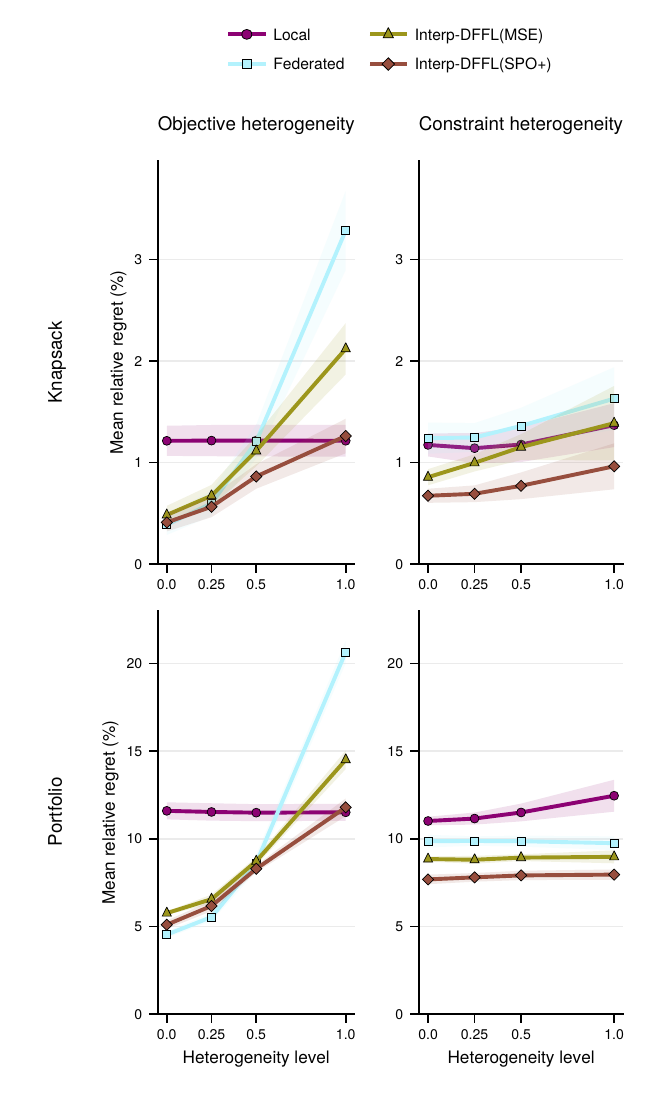}
        \caption{Mean relative regret by heterogeneity level for local, federated, and interpolated models. Bands denote $\pm$ one standard deviation across 5 seeds.}
        \label{fig:methods_overall_het}
    \end{minipage}
    \hfill
    \begin{minipage}{0.48\textwidth}
        \centering
        \includegraphics[width=\linewidth]{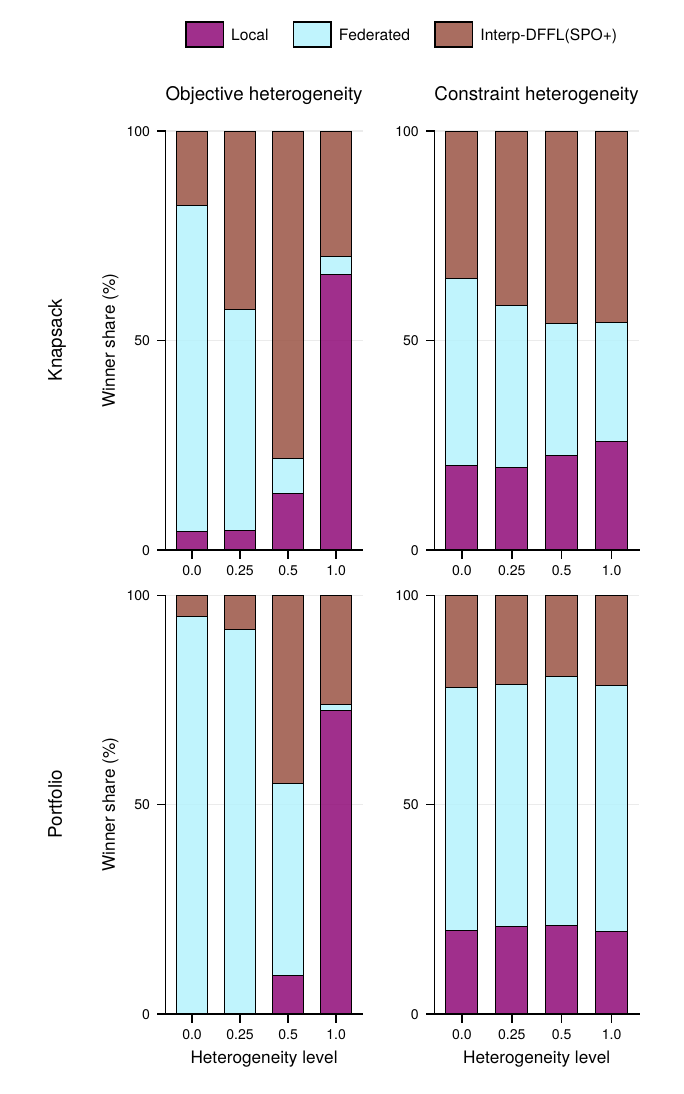}
        \caption{Client-level winner shares by heterogeneity level.}
        \label{fig:winner_share_het}
    \end{minipage}
\end{figure}

The winner-share results in \cref{fig:winner_share_het} provide a useful insight. Interp-DFFL(SPO+) is not the pointwise best method for all replications. In the portfolio experiments, the purely federated model is often the most frequent winner, which is consistent with the strong-convexity analysis: the federated model is usually stable and effective when the downstream feasible region is strongly convex. However, its failures are concentrated in higher heterogeneity configurations and can be large in magnitude. Interp-DFFL(SPO+) reduces these tail failures, which explains why it achieves better aggregate mean and tail regret despite not offering pointwise improvement.

Interp-DFFL(SPO+) also responds to statistical heterogeneity in the direction predicted by the theory. \cref{tab:interp-spo-summary} reports the selected interpolation weights under sample-size imbalance. Since larger $\lambda$ corresponds to more federated weight, data-poor clients rely more heavily on the federated model than data-rich clients: the mean $\lambda$ is $0.67$ versus $0.43$ in knapsack and $0.89$ versus $0.67$ in portfolio. \cref{fig:lambdas} shows the same pattern for optimization heterogeneity. In the polyhedral knapsack problem, clients move toward local models as either objective or constraint heterogeneity increases. In the strongly convex portfolio problem, the selected weights heavily skew towards the federated model under constraint heterogeneity and decrease mainly with objective heterogeneity.

Overall, we observe that even though the purely federated version of DFFL can be very sensitive to model heterogeneity, some form of federation is beneficial to most clients in most cases, even when heterogeneity is relatively high. 

\begin{figure}[t]
    \centering
    \begin{minipage}{0.42\textwidth}
        \centering
        \small
        \begin{tabular}{llcc}
        \toprule
         & Client Type & Mean R (\%) & Mean $\lambda$ \\
        \midrule
        K  & Data-rich & $0.38 \pm 0.04$ & $0.43 \pm 0.03$ \\
        K  & Data-poor  & $0.99 \pm 0.28$ & $0.67 \pm 0.07$ \\
        P & Data-rich & $5.51 \pm 0.15$ & $0.67 \pm 0.03$ \\
        P & Data-poor  & $8.43 \pm 0.39$ & $0.89 \pm 0.02$ \\
        \bottomrule
        \end{tabular}
        \captionof{table}{Interp-DFFL(SPO+) performance under the imbalanced training regime. Statistics are averaged over seeds and reported with inter-seed standard deviation.}
        \label{tab:interp-spo-summary}
    \end{minipage}
    \hfill
    \begin{minipage}{0.55\textwidth}
        \centering
        \includegraphics[width=\linewidth]{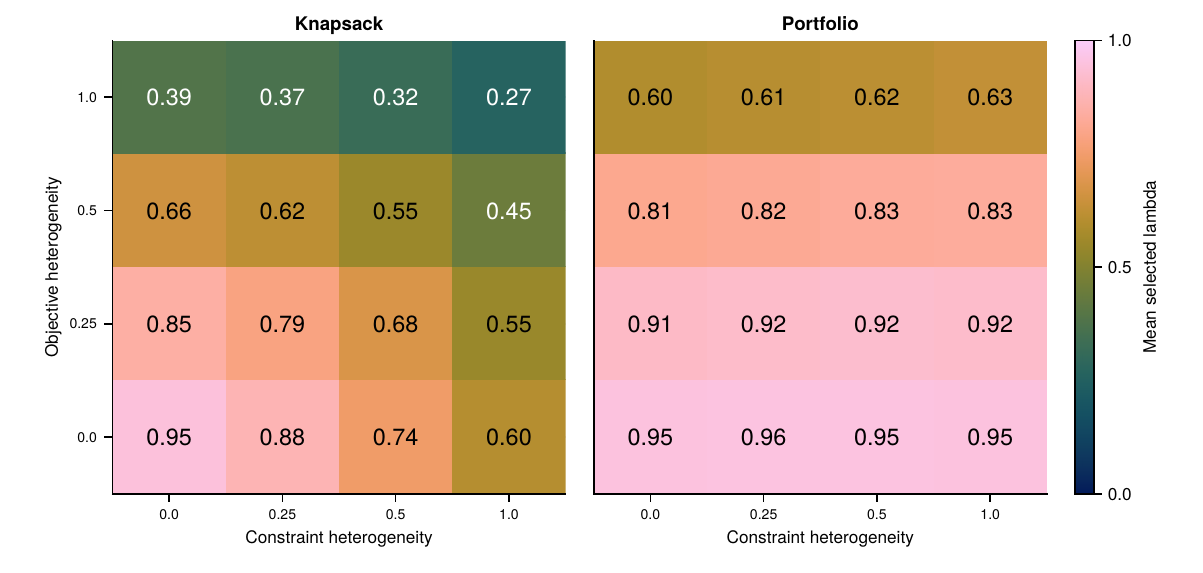}
        \captionof{figure}{Selected Interp-DFFL(SPO+) weights across objective and constraint heterogeneity levels.}
        \label{fig:lambdas}
    \end{minipage}
\end{figure}

\subsection{Case study: PJM Energy Markets}

Energy markets operations often result in difficult optimization and machine learning problems, due to the need to solve large-scale optimization problems under uncertainty, driven by demand variability and increasing renewable penetration \citep{bo2009probabilistic}. At the same time, operational data are often siloed across market participants and subject to strict regulatory requirements. For example, the North American Electric Reliability Corporation (NERC) Critical Infrastructure Protection standard (CIP 011-04) specifies protections for sensitive system information \citep{nerc0114cip}. These constraints can motivate distributed and privacy-aware optimization approaches, where techniques such as federated learning enable coordination across entities without requiring centralized data sharing.

For this set of experiments, we consider agents making decisions based on real-time electricity price, specifically employing hourly energy pricing data from the PJM Interconnection, a regional organization that manages the flow of electricity across 13 U.S. states \citep{pjm_lmp_2025}. We retrieve the hourly regional day-ahead energy prices for the year 2025. We then decompose the prices into a mean price and a per-client congestion term, which matches real-world pricing regimes \citep{litvinov2004marginal}.
Covariates are represented by the weather data, which has been employed in the literature as a strong predictor for locational marginal pricing for energy (LMP), which we retrieve through the Open-Meteo API \citep{hong2020locational, openmeteodata}. The covariates include minimum, maximum, and mean temperature, mean humidity, mean wind speed, mean and max solar irradiance, as well as the day of the week. To avoid time-series forecasting challenges, we do not use any historical pricing data as covariates. For each client $j$, we use six months of data for training and evaluate performance on the remainder of the year. We use a shallow neural net for prediction with 64 neurons. The training parameters are otherwise identical to the ones used for synthetic data.

We assume that each PJM zone has a decision-making agent that solves a daily energy purchasing problem. 
We arbitrarily assign agent demand levels to each zone, corresponding to 4 to 12 hour-long blocks. 
\cref{fig:pjm-case-study} depicts zone (agent) locations and corresponding labels and assigned demand levels. 
The downstream decision problem is to satisfy the daily demand by purchasing on the day-ahead market, i.e., each agent $j$ commits to selecting a subset of $B_j$ hour blocks in the next 24 hours based on available covariate context (weather forecast), such that the realized cost is minimal. Formally, let $S_j = \{w\in[0,1]^m:\mathbf{1}^Tw=B_j\}$ be the feasible set of client $j$, where $B_j$ is the client $j$ total daily demand and $m = 24$ hours. The client then solves,
$\min_{w\in S_j} c_j^Tw,
$
where $c_j$ is the vector of hourly energy prices. This defines a top-k selection problem, and the prediction model is used to identify the cheapest purchasing hours each day.
For example, it could represent determining a daily scheduling of appliances or EV charging. 
Importantly, note that as formulated, the agents significantly differ in their feasible regions ($B_j$) and objective coefficients (different prices at different nodes of the network).

\begin{figure}[t]
    \centering
    \includegraphics[
        width=0.9\textwidth,
        trim=0 0 0 6.5cm,
        clip
    ]{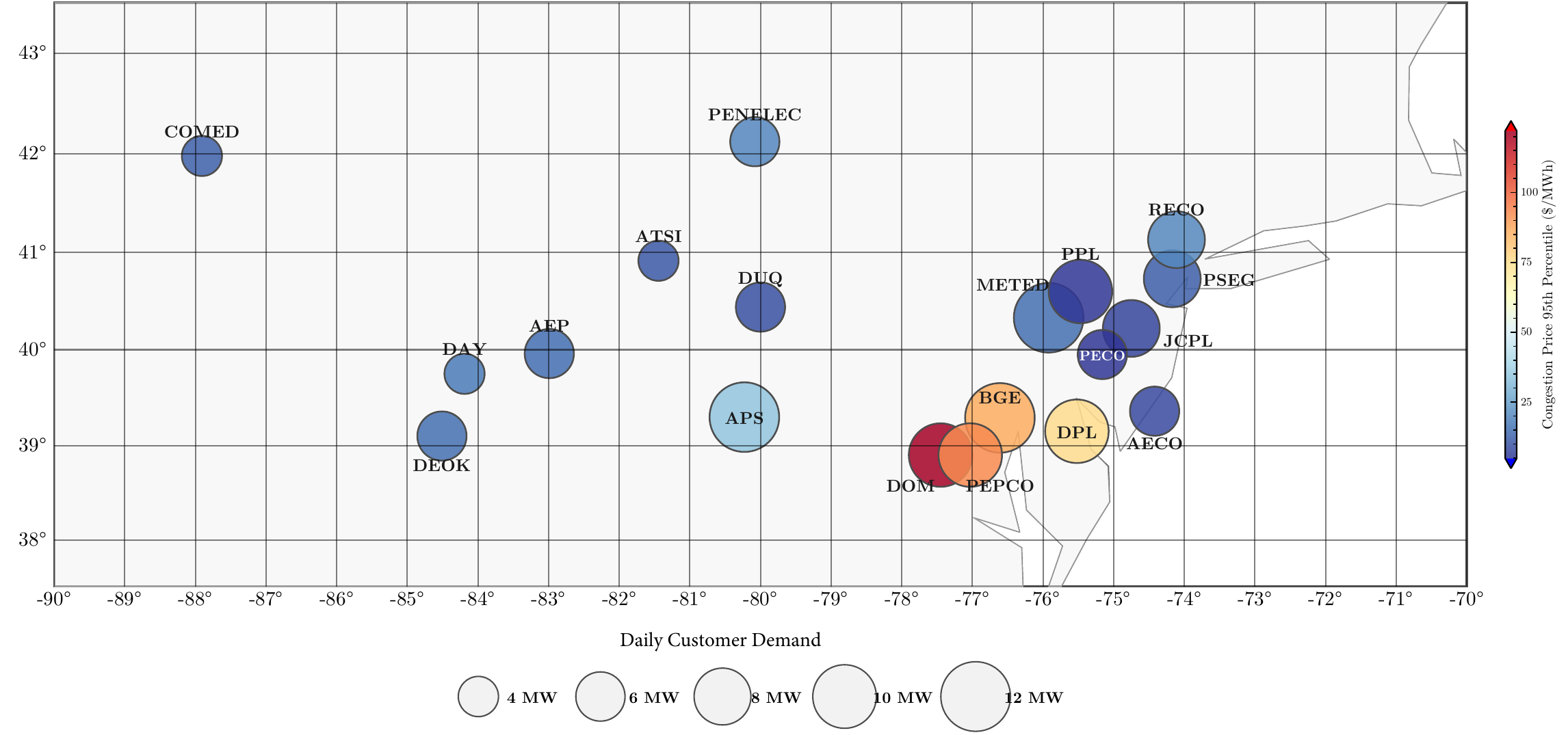}
    \caption{PJM client zones used in the case study. Node size indicates the assigned daily demand $B_j$, measured in hourly blocks, and node color indicates the 95th percentile of the client's congestion-price component.}
    \label{fig:pjm-case-study}
\end{figure}

\begin{table}[t]
\small
\centering
\begin{tabular}{lcccc}
\toprule
Method & Mean \% change & Median \% change & Worst harm & Worst-20\% harm \\
\midrule
Fed-DFFL 
& $-8.23 \pm 0.59$ 
& $-10.45 \pm 2.09$ 
& $+21.23 \pm 4.96$ 
& $+8.78 \pm 3.12$ \\
Interp-DFFL 
& $\mathbf{-9.98 \pm 0.82}$ 
& $\mathbf{-10.79 \pm 1.86}$ 
& $\mathbf{+5.83 \pm 4.20}$ 
& $\mathbf{+2.44 \pm 1.49}$ \\
\bottomrule
\end{tabular}
\caption{Change (\%) in regret relative to the local models across the two methods: Federated DFFL and Interpolated DFFL for the PJM experiments. Metrics are computed across clients and reported as mean $\pm$ standard deviation across 5 seeds. Worst-20\% harm denotes the mean one-sided harm among the worst 20\% of clients.}
\label{tab:pjm-results}
\end{table}

\cref{fig:pjm_results} compares client-level regret changes across the clients and shows that Interp-DFFL reduces the largest harms from federation while preserving most of its average benefit. \cref{fig:pjm_results} and \cref{tab:pjm-results} show that this case study follows the same pattern as the synthetic experiments. Fed-DFFL improves average performance relative to local training, with an average regret reduction of $8.23\%$. However, this average gain masks client-level downside risk. Some clients are harmed by the federated model, with worst-client harm reaching $21.23\%$ and worst-20\% harm $8.78\%$. This showcases the local--federated tradeoff in \cref{sec:fed-gain}, as the pooled model can improve statistical efficiency while still being misaligned with atypical clients. Interp-DFFL reduces this downside. By selecting a client-specific local-federated mixture using validation SPO+ loss, it improves the mean regret reduction from $8.23\%$ to $9.98\%$ and reduces worst-client harm from $21.23\%$ to $5.83\%$. The worst-20\% harm is reduced from $8.78\%$ to $2.44\%$. Thus, as in the synthetic experiments, interpolation does not win for every client, but protects against poor performance for a minority of clients.

 Client-level regrets and selected interpolation weights are reported in \cref{tab:pjm-detailed-tab}.
The selected interpolation weights provide additional evidence that the validation step is responding to client heterogeneity. Several zones select $\lambda=1$, effectively using the federated model, which indicates that for many clients the pooled model is well aligned with local decision risk, while 5 clients prefer local models almost exclusively. Without interpolation, these two groups may struggle to federate effectively, as a subset would always be harmed in the chosen regime. The rest benefit from federation at least somewhat. 

\begin{figure}
    \centering
    \includegraphics[width=0.9\linewidth]{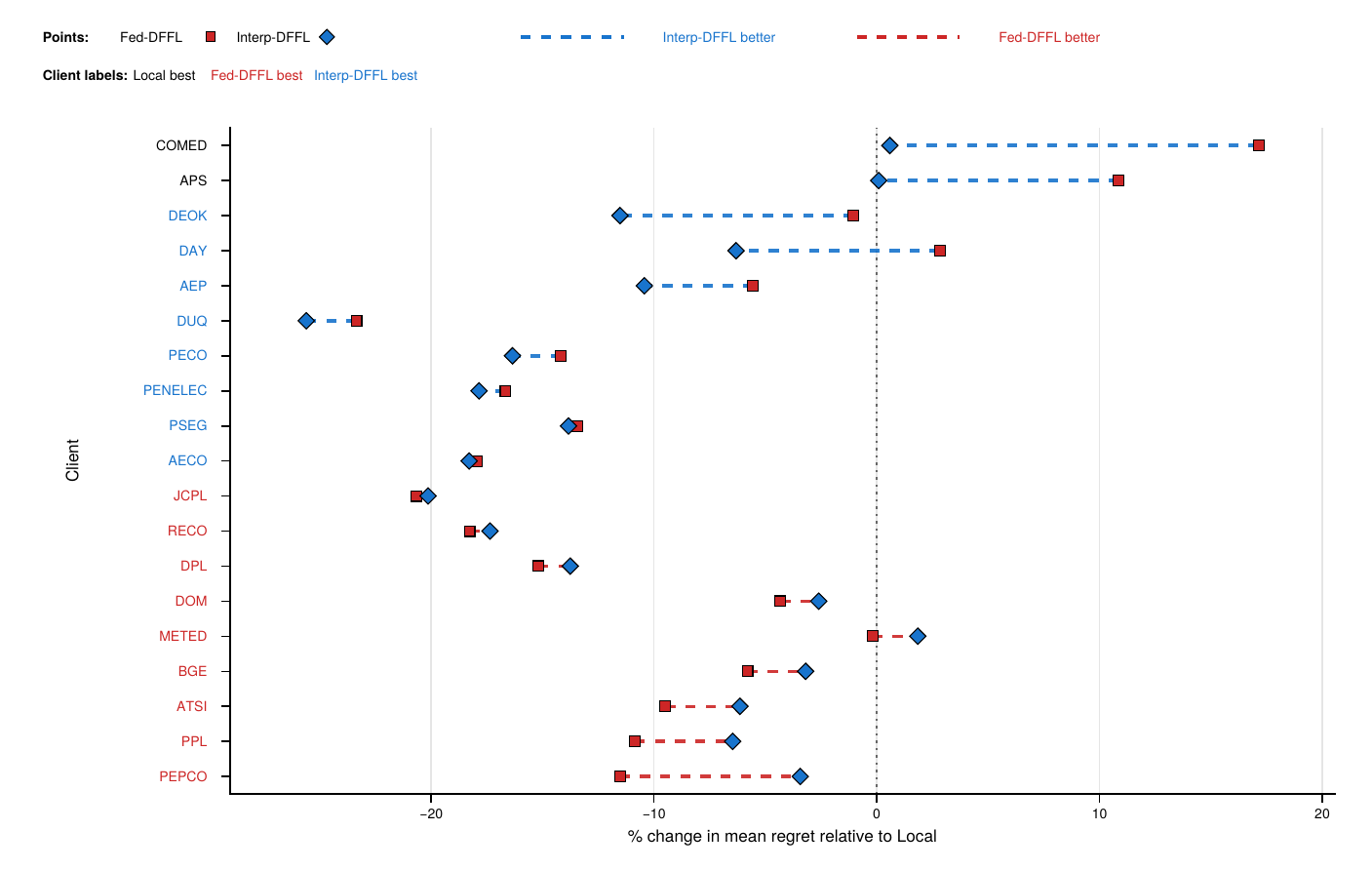}
    \caption{Client-level percentage change in regret relative to local DFFL on the PJM case study. Negative values indicate improvement over the local model. Positive values indicate harm.
    }
    \label{fig:pjm_results}
\end{figure}

\begin{table}[t]
\small
\centering
\begin{tabular}{@{}l|c|c|c|c@{}}
\hline
\textbf{Client} & $\bm{\mathrm{DFL}}_{\mathrm{LOC}}$ & $\bm{\mathrm{DFL}}_{\mathrm{FED}}$ & $\bm{\mathrm{DFL}}_{\mathrm{INTERP}}$ & $\bm{\lambda}$ \\
\hline
\textbf{DUQ}     & 21.55 $\pm$ 0.62 & 16.52 $\pm$ 0.79 & 16.03 $\pm$ 1.16 & 1.00 $\pm$ 0.00 \\
\textbf{JCPL}    & 46.77 $\pm$ 0.62 & 37.11 $\pm$ 0.46 & 37.35 $\pm$ 0.92 & 1.00 $\pm$ 0.00 \\
\textbf{RECO}    & 43.84 $\pm$ 0.69 & 35.83 $\pm$ 2.01 & 36.23 $\pm$ 1.41 & 1.00 $\pm$ 0.00 \\
\textbf{AECO}    & 35.58 $\pm$ 0.97 & 29.20 $\pm$ 1.33 & 29.07 $\pm$ 1.49 & 1.00 $\pm$ 0.00 \\
\textbf{PENELEC} & 35.14 $\pm$ 1.10 & 29.28 $\pm$ 0.69 & 28.86 $\pm$ 0.64 & 1.00 $\pm$ 0.00 \\
\textbf{DPL}     & 57.56 $\pm$ 0.99 & 48.82 $\pm$ 2.14 & 49.64 $\pm$ 1.42 & 1.00 $\pm$ 0.00 \\
\textbf{PECO}    & 36.48 $\pm$ 1.58 & 31.31 $\pm$ 1.56 & 30.52 $\pm$ 0.98 & 1.00 $\pm$ 0.00 \\
\textbf{PSEG}    & 44.90 $\pm$ 2.31 & 38.87 $\pm$ 1.04 & 38.69 $\pm$ 1.28 & 1.00 $\pm$ 0.00 \\
\textbf{PEPCO}   & 66.99 $\pm$ 2.41 & 59.27 $\pm$ 2.60 & 64.69 $\pm$ 1.93 & 0.00 $\pm$ 0.00 \\
\textbf{PPL}     & 46.99 $\pm$ 1.57 & 41.89 $\pm$ 1.19 & 43.95 $\pm$ 4.50 & 0.74 $\pm$ 0.38 \\
\textbf{ATSI}    & 13.84 $\pm$ 0.58 & 12.53 $\pm$ 0.43 & 12.99 $\pm$ 0.80 & 0.38 $\pm$ 0.28 \\
\textbf{BGE}     & 66.45 $\pm$ 1.82 & 62.60 $\pm$ 3.77 & 64.33 $\pm$ 1.66 & 0.00 $\pm$ 0.00 \\
\textbf{AEP}     & 22.63 $\pm$ 0.47 & 21.38 $\pm$ 0.80 & 20.27 $\pm$ 0.73 & 1.00 $\pm$ 0.00 \\
\textbf{DOM}     & 63.63 $\pm$ 1.72 & 60.87 $\pm$ 2.84 & 61.97 $\pm$ 1.27 & 0.00 $\pm$ 0.00 \\
\textbf{DEOK}    & 23.64 $\pm$ 0.34 & 23.39 $\pm$ 0.66 & 20.91 $\pm$ 1.46 & 0.88 $\pm$ 0.17 \\
\textbf{METED}   & 43.93 $\pm$ 1.20 & 43.85 $\pm$ 0.65 & 44.74 $\pm$ 0.75 & 0.01 $\pm$ 0.02 \\
\textbf{DAY}     & 13.54 $\pm$ 0.36 & 13.92 $\pm$ 0.78 & 12.68 $\pm$ 0.40 & 0.57 $\pm$ 0.40 \\
\textbf{APS}     & 48.40 $\pm$ 2.14 & 53.66 $\pm$ 4.94 & 48.45 $\pm$ 1.37 & 0.00 $\pm$ 0.00 \\
\textbf{COMED}   & 14.51 $\pm$ 0.33 & 17.00 $\pm$ 0.79 & 14.60 $\pm$ 0.49 & 0.54 $\pm$ 0.10 \\
\hline
\end{tabular}
\caption{Detailed results of the PJM case study.
$\bm{\mathrm{DFL}}_{\mathrm{LOC}}$: The downstream regret for each client when using SPO+ with a local-only model.
$\bm{\mathrm{DFL}}_{\mathrm{FED}}$: The downstream regret for each client when using SPO+ with a federated learning model.
$\bm{\mathrm{DFL}}_{\mathrm{INTERP}}$: The downstream regret for each client when using SPO+ with an interpolated local/federated model.
$\bm{\lambda}$: The interpolation weight selected per client (1 = federated, 0 = local).
All values are reported as mean $\pm$ standard deviation across 5 seeds.}
\label{tab:pjm-detailed-tab}
\end{table}

\section{Conclusions and Future Work}
\label{sec:conclusions}
In this paper, we introduced Decision-Focused Federated Learning under heterogeneous objectives and constraints. Using a support-function representation of SPO+, we derived heterogeneity bounds that separate objective shift from feasible-set shift. For general compact feasible sets, the bounds contain an unavoidable diameter-scale term, due to the discontinuity of optimizer mappings in polyhedral problems. Under strongly convex feasible sets, this term can be replaced by a stability-based control that vanishes with heterogeneity. We then lifted these pointwise discrepancy bounds to a local-versus-federated excess-risk comparison, and identified the central DFFL tradeoff: pooling data reduces statistical error, while optimization heterogeneity can misalign the pooled model with individual clients’ decision risks.

Our experiments support this tradeoff. In the polyhedral knapsack problem, federation is sensitive to objective and constraint heterogeneity, especially when clients have enough data to train effective local models. In the strongly convex portfolio problem, federation is more robust, especially under constraint heterogeneity. We also showed that a simple validation-based interpolation between local and federated DFFL models can further confirm the theory: clients choose more federated weight when data are scarce, or heterogeneity is low, and more local weight when optimization heterogeneity is high. In both synthetic experiments and the PJM energy-pricing case study, this interpolation reduces aggregate regret and client-level harm.

Several extensions remain open. First, the current theory compares local ERM and pooled ERM at the level of risks. Developing optimization and communication aware guarantees for specific DFFL algorithms remains an important direction. Second, the analysis assumes a shared covariate distribution in the client–mixture discrepancy bound. Extending the theory to covariate shift, for example through optimal transport or distributional discrepancy measures, would make the framework more realistic for cross-silo deployments. Third, the interpolation results suggest that personalization is valuable in DFFL, but the present approach is post-training and validation-based. Future work could incorporate objective and feasible-set heterogeneity directly into aggregation,  loss functions, personalized layers, or client-specific adaptation during training.

\section*{Acknowledgments}
This work was completed in part with resources provided by the Auburn University Easley Cluster.

\clearpage
\bibliographystyle{plainnat}
\bibliography{bib}

@book{boyd2004convex,
  title={Convex Optimization},
  author={Boyd, Stephen and Vandenberghe, Lieven},
  year={2004},
  publisher={Cambridge university press}
}

@article{elmachtoub2022smart,
  title={Smart “predict, then optimize”},
  author={Elmachtoub, Adam N and Grigas, Paul},
  journal={Management Science},
  volume={68},
  number={1},
  pages={9--26},
  year={2022},
  publisher={INFORMS}
}

@article{polovinkin1996strongly,
  title={Strongly convex analysis},
  author={Polovinkin, Evgenii Sergeevich},
  journal={Sbornik: Mathematics},
  volume={187},
  number={2},
  pages={259},
  year={1996},
  publisher={IOP Publishing}
}

@article{liu2021risk,
  title={Risk bounds and calibration for a smart predict-then-optimize method},
  author={Liu, Heyuan and Grigas, Paul},
  journal={Advances in Neural Information Processing Systems},
  volume={34},
  pages={22083--22094},
  year={2021}
}

@article{su2023non,
  title={A non-parametric view of fedavg and fedprox: Beyond stationary points},
  author={Su, Lili and Xu, Jiaming and Yang, Pengkun},
  journal={Journal of Machine Learning Research},
  volume={24},
  number={203},
  pages={1--48},
  year={2023}
}

@article{ho2022risk,
  title={Risk guarantees for end-to-end prediction and optimization processes},
  author={Ho-Nguyen, Nam and K{\i}l{\i}n{\c{c}}-Karzan, Fatma},
  journal={Management Science},
  volume={68},
  number={12},
  pages={8680--8698},
  year={2022},
  publisher={INFORMS}
}

@article{shalev2012online,
  title={Online Learning and Online Convex Optimization},
  author={Shalev-Shwartz, Shai},
  journal={Foundations and Trends{\textregistered} in Machine Learning},
  volume={4},
  number={2},
  pages={107--194},
  year={2012},
  publisher={Now Publishers Inc. Hanover, MA, USA}
}

@inproceedings{mcmahan2017communication,
  title={Communication-efficient learning of deep networks from decentralized data},
  author={McMahan, Brendan and Moore, Eider and Ramage, Daniel and Hampson, Seth and y Arcas, Blaise Aguera},
  booktitle={Artificial intelligence and statistics},
  pages={1273--1282},
  year={2017},
  organization={PMLR}
}

@article{el2019generalization,
  title={Generalization bounds in the predict-then-optimize framework},
  author={El Balghiti, Othman and Elmachtoub, Adam N and Grigas, Paul and Tewari, Ambuj},
  journal={Advances in Neural Information Processing Systems},
  volume={32},
  year={2019}
}

@book{schneider2013convex,
  title={Convex bodies: the Brunn--Minkowski theory},
  author={Schneider, Rolf},
  volume={151},
  year={2013},
  publisher={Cambridge University Press}
}

@article{bertsimas2020predictive,
  title={From predictive to prescriptive analytics},
  author={Bertsimas, Dimitris and Kallus, Nathan},
  journal={Management Science},
  volume={66},
  number={3},
  pages={1025--1044},
  year={2020},
  publisher={INFORMS}
}

@incollection{qi2022integrating,
  title={Integrating prediction/estimation and optimization with applications in operations management},
  author={Qi, Meng and Shen, Zuo-Jun},
  booktitle={Tutorials in Operations Research: Emerging and Impactful Topics in Operations},
  pages={36--58},
  year={2022},
  publisher={INFORMS}
}

@article{sadana2025survey,
  title={A survey of contextual optimization methods for decision-making under uncertainty},
  author={Sadana, Utsav and Chenreddy, Abhilash and Delage, Erick and Forel, Alexandre and Frejinger, Emma and Vidal, Thibaut},
  journal={European Journal of Operational Research},
  volume={320},
  number={2},
  pages={271--289},
  year={2025},
  publisher={Elsevier}
}

@article{kairouz2021advances,
  title={Advances and open problems in federated learning},
  author={Kairouz, Peter and McMahan, H Brendan and Avent, Brendan and Bellet, Aur{\'e}lien and Bennis, Mehdi and Bhagoji, Arjun Nitin and Bonawitz, Kallista and Charles, Zachary and Cormode, Graham and Cummings, Rachel and others},
  journal={Foundations and trends{\textregistered} in Machine Learning},
  volume={14},
  number={1--2},
  pages={1--210},
  year={2021},
  publisher={Now Publishers, Inc.}
}

@article{mandi2024decision,
  title={Decision-focused learning: Foundations, state of the art, benchmark and future opportunities},
  author={Mandi, Jayanta and Kotary, James and Berden, Senne and Mulamba, Maxime and Bucarey, Victor and Guns, Tias and Fioretto, Ferdinando},
  journal={Journal of Artificial Intelligence Research},
  volume={80},
  pages={1623--1701},
  year={2024}
}

@inproceedings{amos2017optnet,
  title={Optnet: Differentiable optimization as a layer in neural networks},
  author={Amos, Brandon and Kolter, J Zico},
  booktitle={International Conference on Machine Learning},
  pages={136--145},
  year={2017},
  organization={PMLR}
}

@inproceedings{khaled2020tighter,
  title={Tighter theory for local SGD on identical and heterogeneous data},
  author={Khaled, Ahmed and Mishchenko, Konstantin and Richt{\'a}rik, Peter},
  booktitle={International Conference on Artificial Intelligence and Statistics},
  pages={4519--4529},
  year={2020},
  organization={PMLR}
}

@inproceedings{
stich2018local,
title={Local {SGD} Converges Fast and Communicates Little},
author={Sebastian U. Stich},
booktitle={International Conference on Learning Representations},
year={2019},
url={https://openreview.net/forum?id=S1g2JnRcFX},
}

@inproceedings{
Li2020On,
title={On the Convergence of FedAvg on Non-IID Data},
author={Xiang Li and Kaixuan Huang and Wenhao Yang and Shusen Wang and Zhihua Zhang},
booktitle={International Conference on Learning Representations},
year={2020},
url={https://openreview.net/forum?id=HJxNAnVtDS}
}

@inproceedings{khodadadian2022federated,
  title={Federated reinforcement learning: Linear speedup under markovian sampling},
  author={Khodadadian, Sajad and Sharma, Pranay and Joshi, Gauri and Maguluri, Siva Theja},
  booktitle={International Conference on Machine Learning},
  pages={10997--11057},
  year={2022},
  organization={PMLR}
}

@inproceedings{flanagan2020federated,
  title={Federated multi-view matrix factorization for personalized recommendations},
  author={Flanagan, Adrian and Oyomno, Were and Grigorievskiy, Alexander and Tan, Kuan E and Khan, Suleiman A and Ammad-Ud-Din, Muhammad},
  booktitle={Joint European conference on machine learning and knowledge discovery in databases},
  pages={324--347},
  year={2020},
  organization={Springer}
}

@article{zhang2021federated,
  title={Federated learning with domain generalization},
  author={Zhang, Liling and Lei, Xinyu and Shi, Yichun and Huang, Hongyu and Chen, Chao},
  journal={arXiv preprint arXiv:2111.10487},
  year={2021}
}

@article{fallah2020personalized,
  title={Personalized federated learning with theoretical guarantees: A model-agnostic meta-learning approach},
  author={Fallah, Alireza and Mokhtari, Aryan and Ozdaglar, Asuman},
  journal={Advances in neural information processing systems},
  volume={33},
  pages={3557--3568},
  year={2020}
}

@inproceedings{li2021ditto,
  title={Ditto: Fair and robust federated learning through personalization},
  author={Li, Tian and Hu, Shengyuan and Beirami, Ahmad and Smith, Virginia},
  booktitle={International conference on machine learning},
  pages={6357--6368},
  year={2021},
  organization={PMLR}
}

@article{sattler2020clustered,
  title={Clustered federated learning: Model-agnostic distributed multitask optimization under privacy constraints},
  author={Sattler, Felix and M{\"u}ller, Klaus-Robert and Samek, Wojciech},
  journal={IEEE transactions on neural networks and learning systems},
  volume={32},
  number={8},
  pages={3710--3722},
  year={2020},
  publisher={IEEE}
}

@inproceedings{elmachtoub2020decision,
  title={Decision trees for decision-making under the predict-then-optimize framework},
  author={Elmachtoub, Adam N and Liang, Jason Cheuk Nam and McNellis, Ryan},
  booktitle={International conference on machine learning},
  pages={2858--2867},
  year={2020},
  organization={PMLR}
}

@inproceedings{mandi2020smart,
  title={Smart predict-and-optimize for hard combinatorial optimization problems},
  author={Mandi, Jayanta and Stuckey, Peter J and Guns, Tias and others},
  booktitle={Proceedings of the AAAI conference on artificial intelligence},
  volume={34},
  pages={1603--1610},
  year={2020}
}

@inproceedings{jeong2022exact,
  title={An exact symbolic reduction of linear smart predict+ optimize to mixed integer linear programming},
  author={Jeong, Jihwan and Jaggi, Parth and Butler, Andrew and Sanner, Scott},
  booktitle={International Conference on Machine Learning},
  pages={10053--10067},
  year={2022},
  organization={PMLR}
}

@inproceedings{augenstein2020generative,
  title={Generative Models for Effective ML on Private, Decentralized Datasets},
  author={Augenstein, Sean and McMahan, H Brendan and Ramage, Daniel and Ramaswamy, Swaroop and Kairouz, Peter and Chen, Mingqing and Mathews, Rajiv and y Arcas, Blaise Aguera},
  year={2020},
  booktitle={International Conference on Learning Representations}
}

@article{zhuo2019federated,
  title={Federated deep reinforcement learning},
  author={Zhuo, Hankz Hankui and Feng, Wenfeng and Lin, Yufeng and Xu, Qian and Yang, Qiang},
  journal={arXiv preprint arXiv:1901.08277},
  year={2019}
}

@article{li2020federated,
  title={Federated optimization in heterogeneous networks},
  author={Li, Tian and Sahu, Anit Kumar and Zaheer, Manzil and Sanjabi, Maziar and Talwalkar, Ameet and Smith, Virginia},
  journal={Proceedings of Machine learning and systems},
  volume={2},
  pages={429--450},
  year={2020}
}

@article{mivsic2020data,
  title={Data analytics in operations management: A review},
  author={Mi{\v{s}}i{\'c}, Velibor V and Perakis, Georgia},
  journal={Manufacturing \& Service Operations Management},
  volume={22},
  number={1},
  pages={158--169},
  year={2020},
  publisher={INFORMS}
}

@article{halkin1974implicit,
  title={Implicit functions and optimization problems without continuous differentiability of the data},
  author={Halkin, Hubert},
  journal={SIAM Journal on Control},
  volume={12},
  number={2},
  pages={229--236},
  year={1974},
  publisher={SIAM}
}

@inproceedings{domke2010implicit,
  title={Implicit differentiation by perturbation},
  author={Domke, Justin},
  booktitle={Proceedings of the 24th International Conference on Neural Information Processing Systems},
  pages={523--531},
  year={2010}
}

@inproceedings{poganvcic2019differentiation,
  title={Differentiation of blackbox combinatorial solvers},
  author={Pogan{\v{c}}i{\'c}, Marin Vlastelica and Paulus, Anselm and Musil, Vit and Martius, Georg and Rolinek, Michal},
  booktitle={International Conference on Learning Representations},
  year={2019}
}

@inproceedings{kotary2021end,
  title={End-to-End Constrained Optimization Learning: A Survey},
  author={Kotary, James and Fioretto, Ferdinando and Van Hentenryck, Pascal and Wilder, Bryan},
  booktitle={International Joint Conference on Artificial Intelligence},
  year={2021}
}

@article{smith2017federated,
  title={Federated multi-task learning},
  author={Smith, Virginia and Chiang, Chao-Kai and Sanjabi, Maziar and Talwalkar, Ameet S},
  journal={Advances in neural information processing systems},
  volume={30},
  year={2017}
}

@inproceedings{karimireddy2020scaffold,
  title={Scaffold: Stochastic controlled averaging for federated learning},
  author={Karimireddy, Sai Praneeth and Kale, Satyen and Mohri, Mehryar and Reddi, Sashank and Stich, Sebastian and Suresh, Ananda Theertha},
  booktitle={International conference on machine learning},
  pages={5132--5143},
  year={2020},
  organization={PMLR}
}

@article{wang2020tackling,
  title={Tackling the objective inconsistency problem in heterogeneous federated optimization},
  author={Wang, Jianyu and Liu, Qinghua and Liang, Hao and Joshi, Gauri and Poor, H Vincent},
  journal={Advances in neural information processing systems},
  volume={33},
  pages={7611--7623},
  year={2020}
}

@article{li2025centralized,
  title={From centralized to decentralized federated learning: Theoretical insights, privacy preservation, and robustness challenges},
  author={Li, Qiongxiu and Yu, Wenrui and Xia, Yufei and Pang, Jun},
  journal={arXiv preprint arXiv:2503.07505},
  year={2025}
}

@InProceedings{tang2024cave,
author="Tang, Bo
and Khalil, Elias B.",
editor="Dilkina, Bistra",
title="CaVE: A Cone-Aligned Approach for Fast Predict-then-optimize with Binary Linear Programs",
booktitle="Integration of Constraint Programming, Artificial Intelligence, and Operations Research",
year="2024",
publisher="Springer Nature Switzerland",
address="Cham",
pages="193--210",
isbn="978-3-031-60599-4"
}

@article{ye2023heterogeneous,
  title={Heterogeneous federated learning: State-of-the-art and research challenges},
  author={Ye, Mang and Fang, Xiuwen and Du, Bo and Yuen, Pong C and Tao, Dacheng},
  journal={ACM Computing Surveys},
  volume={56},
  number={3},
  pages={1--44},
  year={2023},
  publisher={ACM New York, NY, USA}
}

@article{gao2022survey,
  title={A survey on heterogeneous federated learning},
  author={Gao, Dashan and Yao, Xin and Yang, Qiang},
  journal={arXiv preprint arXiv:2210.04505},
  year={2022}
}

@article{bezanson2017julia,
  title={Julia: A fresh approach to numerical computing},
  author={Bezanson, Jeff and Edelman, Alan and Karpinski, Stefan and Shah, Viral B},
  journal={SIAM Review},
  volume={59},
  number={1},
  pages={65--98},
  year={2017},
  publisher={SIAM}
}

@article{pal2023lux,
  title={Lux: Explicit parameterization of deep neural networks in julia},
  author={Pal, Avik},
  journal={Zenodo},
  year={2023}
}

@article{innes2019differentiable,
  title={A differentiable programming system to bridge machine learning and scientific computing},
  author={Innes, Mike and Edelman, Alan and Fischer, Keno and Rackauckas, Chris and Saba, Elliot and Shah, Viral B and Tebbutt, Will},
  journal={arXiv preprint arXiv:1907.07587},
  year={2019}
}

@misc{pjm_lmp_2025,
  author       = {{PJM Interconnection, L.L.C.}},
  title        = {Real-Time Hourly Locational Marginal Pricing (LMP) Data},
  year         = {2025},
  howpublished = {\url{https://www.energyonline.com/}},
  note         = {Accessed via EnergyOnline. Data from Jan 1--Dec 31, 2025. Accessed: 2026-02-14}
}

@misc{openmeteodata,
  author = {Zippenfenig, Patrick},
  doi = {10.5281/zenodo.7970649},
  licence = {CC-BY-4.0},
  title = {Open-Meteo.com Weather API},
  year = {2023},
  copyright = {Creative Commons Attribution 4.0 International},
  url = {https://open-meteo.com/}
}

@article{hong2020locational,
  title={Locational marginal price forecasting in a day-ahead power market using spatiotemporal deep learning network},
  author={Hong, Ying-Yi and Taylar, Jonathan V and Fajardo, Arnel C},
  journal={Sustainable Energy, Grids and Networks},
  volume={24},
  pages={100406},
  year={2020},
  publisher={Elsevier}
}

@article{litvinov2004marginal,
  title={Marginal loss modeling in LMP calculation},
  author={Litvinov, Eugene and Zheng, Tongxin and Rosenwald, Gary and Shamsollahi, Payman},
  journal={IEEE transactions on Power Systems},
  volume={19},
  number={2},
  pages={880--888},
  year={2004},
  publisher={IEEE}
}

@article{bo2009probabilistic,
  title={Probabilistic LMP forecasting considering load uncertainty},
  author={Bo, Rui and Li, Fangxing},
  journal={IEEE Transactions on Power Systems},
  volume={24},
  number={3},
  pages={1279--1289},
  year={2009},
  publisher={IEEE}
}

@misc{nerc0114cip,
  author       = {{North American Electric Reliability Corporation}},
  title        = {{CIP-011-4 -- Cyber Security -- Information Protection}},
  year         = {2019},
  url     = {https://www.nerc.com/pa/Stand/Reliability%20Standards/CIP-011-4.pdf},
  urldate = {2026-02-14},
}

@article{ban2019big,
  title={The big data newsvendor: Practical insights from machine learning},
  author={Ban, Gah-Yi and Rudin, Cynthia},
  journal={Operations Research},
  volume={67},
  number={1},
  pages={90--108},
  year={2019},
  publisher={INFORMS}
}

@article{bertsimas2022data,
  title={Data-driven optimization: A reproducing kernel hilbert space approach},
  author={Bertsimas, Dimitris and Koduri, Nihal},
  journal={Operations Research},
  volume={70},
  number={1},
  pages={454--471},
  year={2022},
  publisher={INFORMS}
}

@article{kannan2025data,
  title={Data-driven sample average approximation with covariate information},
  author={Kannan, Rohit and Bayraksan, G{\"u}zin and Luedtke, James R},
  journal={Operations Research},
  volume={73},
  number={6},
  pages={3245--3259},
  year={2025},
  publisher={INFORMS}
}

@article{qi2025integrated,
  title={Integrated conditional estimation-optimization},
  author={Qi, Meng and Grigas, Paul and Shen, Zuo-Jun},
  journal={Operations Research},
  year={2025},
  publisher={INFORMS}
}

@article{chen2026robust,
  title={Robust actionable prescriptive analytics},
  author={Chen, Li and Sim, Melvyn and Zhang, Xun and Zhao, Long and Zhou, Minglong},
  journal={Operations Research},
  volume={74},
  number={1},
  pages={550--571},
  year={2026},
  publisher={INFORMS}
}

@article{chen2025algorithmic,
  title={An algorithmic approach to managing supply chain data security: The differentially private newsvendor},
  author={Chen, Du and Chua, Geoffrey A},
  journal={Operations Research},
  year={2025},
  publisher={INFORMS}
}

@article{ayer2019prioritizing,
  title={Prioritizing hepatitis C treatment in US prisons},
  author={Ayer, Turgay and Zhang, Can and Bonifonte, Anthony and Spaulding, Anne C and Chhatwal, Jagpreet},
  journal={Operations Research},
  volume={67},
  number={3},
  pages={853--873},
  year={2019},
  publisher={INFORMS}
}

@book{shalev2014understanding,
  title={Understanding machine learning: From theory to algorithms},
  author={Shalev-Shwartz, Shai and Ben-David, Shai},
  year={2014},
  publisher={Cambridge university press}
}

@article{m2024personalized,
  title={Personalized federated learning with mixture of models for adaptive prediction and model fine-tuning},
  author={M Ghari, Pouya and Shen, Yanning},
  journal={Advances in Neural Information Processing Systems},
  volume={37},
  pages={92155--92183},
  year={2024}
}

@article{deng2020adaptive,
  title={Adaptive personalized federated learning},
  author={Deng, Yuyang and Kamani, Mohammad Mahdi and Mahdavi, Mehrdad},
  journal={arXiv preprint arXiv:2003.13461},
  year={2020}
}

@article{mansour2020three,
  title={Three approaches for personalization with applications to federated learning},
  author={Mansour, Yishay and Mohri, Mehryar and Ro, Jae and Suresh, Ananda Theertha},
  journal={arXiv preprint arXiv:2002.10619},
  year={2020}
}
\appendix
\section{Implementation Details}
\label{appsec:experiments}

\subsection{Synthetic Data-Generation Process}
\label{appsubsec:synthetic_dgp}

Each synthetic instance is a pair $(x_i, c_i)$ with features $x_i \in \mathbb{R}^p$ and product-level costs $c_i \in \mathbb{R}^d$, with defaults $p=8$ and $d=50$. For each run, the random seed is fixed.

\paragraph{Feature generation.}
Features are sampled i.i.d. as
$
x_i \sim \mathcal{N}(0, I_p), \quad i=1,\dots,n.
$
To couple features to product costs, we sample a binary loading matrix $B \in \{0,1\}^{d \times p}$ once per dataset, with i.i.d.\ entries $B_{k\ell} \sim \mathrm{Bernoulli}(0.5)$, similar to the generation method used by \cite{liu2021risk}.

\paragraph{Cost generation.}
For product $k$ and sample $i$ we use a generating polynomial, 
\[
c_{ki} = \Bigl[1 + \Bigl(1 + \frac{B_k^\top x_i}{\sqrt{p}}\Bigr)^{\kappa}\Bigr] \xi_{ki},
\]
where $\kappa$ controls nonlinearity and $\xi_{ki}$ is multiplicative noise:
\[
\xi_{ki} =
\begin{cases}
1, & \epsilon = 0,\\
\mathrm{Uniform}(1-\epsilon, 1+\epsilon), & \epsilon > 0.
\end{cases}
\]

\paragraph{Client assignment and data imbalance.}
Samples are randomly permuted and assigned to 20 clients. Two configurations are used to test heterogeneity through client sample sizes, as in \citep{su2023non}:
\begin{enumerate}
    \item \textbf{Balanced Data}: $n=2000$, with equal allocation (100 samples/client).
    \item \textbf{Data-scarcity}: $n=5500$, with strong imbalance: clients 1--10 receive 500 samples each and clients 11--20 receive 50 samples each.
\end{enumerate}

\paragraph{Client-specific parameters (heterogeneity controls).}
Beyond predictive data $(x_i,c_i)$, each client receives optimization parameters controlling constraint heterogeneity, while objective heterogeneity is introduced through the predictive map. Specifically, if sample $i$ is assigned to client $j$, the generating polynomial uses a client-specific loading matrix
\[
B^{(j)} = B R_j,
\]
where $R_j$ is an orthogonal rotation generated from a skew-symmetric Gaussian matrix and normalized so that $|\eta_{\mathrm{obj}}|$ controls the rotation magnitude. When $\eta_{\mathrm{obj}} = 0$, $R_j = I_p$.

\textbf{Portfolio Optimization.}
Constraint offsets are generated as
\[
h_j = \mathrm{clip}\!\left(\frac{\log d}{2} + \eta_{\mathrm{constr}} \nu_j,\ 10^{-6},\ \log d - 10^{-6}\right),
\]
where $\nu_j \sim \mathrm{Unif}(-1,1)$,
and the stored scalar is $r_j = -h_j$.

\textbf{Fractional Knapsack.}
Shared base parameters (knapsack weights $\alpha$ and their total bound $B$) are sampled once,
\[
a^0 \in [0.5,1.5]^d,\qquad B^0 = 0.6d.
\]
Client-level parameters are then,
\[
B_j = B^0 \exp(\eta_{\mathrm{constr}}\zeta_j), \quad \zeta_j \sim \mathcal{N}(0,1).
\]
Thus, $\eta_{\mathrm{obj}}$ scales heterogeneity in the mapping of covariates to cost vectors as before, and $\eta_{\mathrm{constr}}$ scales feasible set heterogeneity.

We conduct the experiment for:
\[
\eta_{\mathrm{obj}},\eta_{\mathrm{constr}} \in \{0,0.25,0.5,1.0\},
\]
across 5 seeds, degrees $\kappa \in \{2,4,6,8\}$, and noise levels $\epsilon \in \{0,1\}$.

Evaluation uses an independent synthetic test set of 20,000 samples from the same data-generation law (the same seed, degree, noise, and $\eta_{\mathrm{obj}}$ controls) with balanced random assignment over 20 clients.

The experiments were run on a Slurm cluster using a single CPU-only compute node. Each run allocated 40 CPU cores and 60 GB memory and executed for 20 hours wall-clock time, corresponding to 800 CPU core-hours (20 node-hours).

\subsection{Algorithm Modifications}
\label{appsubsec:algorithms}
We adapt the \texttt{FedAvg} algorithm by \citet{mcmahan2017communication} to accomodate DFL oracle solvers as a subroutine and the SPO+ gradient update. Specific changes made are shown in blue in \cref{alg:fedavg-dffl,alg:clientupdate-dffl}.

\begin{algorithm}[t]
\caption{FedAvg-DFFL}
\label{alg:fedavg-dffl}
\begin{algorithmic}[1]
\Require Total clients $K$; communication rounds $T$; client fraction $C$; local epochs $E$; learning rate $\eta$; local optimizer $\mathrm{Opt}$.
\State Initialize global parameters $w^{0}$
\For{$t = 0,1,\dots,T-1$}
  \State $m \gets \max(\lfloor CK \rfloor, 1)$ \Same
  \State $S_t \gets$ sample $m$ clients uniformly without replacement \Same
  \ForAll{$k \in S_t$ \textbf{in parallel}}
    \State $w_k^{t+1} \gets \textsc{ClientUpdateDFFL}(k, w^t, E, \eta, \mathrm{Opt})$ \Diff
    \State $n_k \gets |\mathcal{D}_k|$ \Same
  \EndFor
  \State $w^{t+1} \gets \sum_{k\in S_t}
  \frac{n_k}{\sum_{j\in S_t} n_j}\, w_k^{t+1}$ \Same
\EndFor
\State \Return $w^{F} \gets w^{T}$
\end{algorithmic}
\end{algorithm}

\begin{algorithm}[t]
\caption{\textsc{ClientUpdateDFFL} (run on client $k$)}
\label{alg:clientupdate-dffl}
\begin{algorithmic}[1]
\Require Client index $k$; initialization $w_{\mathrm{init}}$; client dataset $\mathcal{D}_k$; epochs $E$; learning rate $\eta$; optimizer $\mathrm{Opt}$; client-specific optimization parameters.
\State $w_k \gets w_{\mathrm{init}}$ \Same
\State Initialize local optimizer state \Same
\For{$i = 1,2,\dots,E$} \Same
  \For{each minibatch $b \subset \mathcal{D}_k$}
    \State $\hat{y} \gets f_{w_k}(b)$ \Same
    \State $g_{\hat{y}} \gets \nabla_{\hat{y}}\,
    \mathcal{L}_{\mathrm{SPO+}}(\hat{y}; b,\text{client params})$ \Diff
    \State $g_{w} \gets \nabla_{w_k} f_{w_k}(b)^{\top} g_{\hat{y}}$ \Diff
    \State $w_k \gets \mathrm{OptStep}(w_k, g_w, \eta)$ \Diff
  \EndFor
\EndFor
\State \Return $w_k$
\end{algorithmic}
\end{algorithm}

\begin{algorithm}[t]
\caption{Interp-DFFL Personalization}
\label{alg:interp-dffl}
\begin{algorithmic}[1]
\Require Federated model $w^{F}$; local DFFL models $\{w_k^{\mathrm{loc}}\}_{k=1}^{K}$; validation sets $\{\mathcal{V}_k\}_{k=1}^{K}$; interpolation grid $\Lambda \subset [0,1]$; validation loss $\ell_{\mathrm{val}} \in \{\ell_{\mathrm{SPO+}}, \mathrm{MSE}\}$.
\For{$k = 1,2,\dots,K$ \textbf{in parallel}}
  \ForAll{$\lambda \in \Lambda$}
    \State Define $h_{k,\lambda}(x) \gets (1-\lambda) f_{w_k^{\mathrm{loc}}}(x) + \lambda f_{w^{F}}(x)$ \Diff
  \EndFor
  \State $\lambda_k^\star \gets
  \arg\min_{\lambda \in \Lambda}
  \frac{1}{|\mathcal{V}_k|}
  \sum_{b \in \mathcal{V}_k}
  \ell_{\mathrm{val}}\!\left(h_{k,\lambda}; b,\text{client params}\right)$ \Diff
  \State Define $h_k^{\mathrm{Interp}}(x) \gets (1-\lambda_k^\star) f_{w_k^{\mathrm{loc}}}(x) + \lambda_k^\star f_{w^{F}}(x)$ \Diff
\EndFor
\State \Return $\{h_k^{\mathrm{Interp}}, \lambda_k^\star\}_{k=1}^{K}$
\end{algorithmic}
\end{algorithm}

\section{Proofs}
\label{appsec:proofs}
\begin{proof}{Proof of \cref{lem:support-set}.}
See \citep{schneider2013convex}. 
Fix $u$. For any $w\in S_1$ there exists $\tilde w\in S_2$ with $\|w-\tilde w\|\le \delta$.
Then $u^\top w \le u^\top \tilde w + \|u\| \|w-\tilde w\|
\le \xi_{S_2}(u)+\|u\| \delta$.
Taking $\max_{w\in S_1}$ gives $\xi_{S_1}(u)\le \xi_{S_2}(u)+\|u\| \delta$.
Repeat with $(S_1,S_2)$ swapped.
\end{proof}

\begin{proof}{Proof of \cref{lem:zstar-set}.}
Note $z_S^\star(c)= -\xi_S(-c)$ and apply \cref{lem:support-set}.
\end{proof}

\begin{proof}{Proof of \cref{lem:translation-invariance}.}
Let $S' = S + v, v \in \mathbb{R}^d$, the translated set. The SPO+ loss is made up of three terms, which we can observe under set translation:
\begin{itemize}
    \item Optimizer shifts: $w^{\star}_{S'} = w^{\star}_S + v$ \citep{boyd2004convex}.
    \item  Optimal value shifts: $z^{\star}_{S'}(c) = z^{\star}_S(c) + c^Tv.$
    \item The max term shifts similarly: $\max_{w \in S'} (c - 2\hat{c})^Tw = \max_{w \in S} (c - 2\hat{c})^Tw + (c - 2\hat{c})^Tv.$
\end{itemize}
Substituting the terms into $\ell_{S+v}(\hat c,c)$ and collecting the "shift" terms gives us the claim.
\end{proof}

\begin{proof}{Proof of \cref{lem:diameter-joint}.}
Write
\[
\xi_S(c_1-2\hat c)-\xi_S(c_2-2\hat c)
\le \max_{w\in S}(c_1-c_2)^\top w,
\]
and
\[
-z_S^\star(c_1)+z_S^\star(c_2)
\le \max_{v\in S}(c_2-c_1)^\top v.
\]
Adding and taking maxima yields
$$
\Big(\xi_S(c_1-2\hat c)-\xi_S(c_2-2\hat c)\Big)
+
\Big(-z_S^\star(c_1)+z_S^\star(c_2)\Big)
\\ \le \max_{w,v\in S}(c_1-c_2)^\top(w-v)
\le \|c_1-c_2\|\,D(S).
$$
Swap $(c_1,c_2)$ to get the absolute value bound.
\end{proof}

\begin{proof}{Proof of \cref{lem:oracle-free}.}

Let $\delta=d_H(S_1,S_2)$. Choose $\tilde w_2\in S_2$ such that $\|w_1-\tilde w_2\|\le \delta$.
Then
\[
\begin{aligned}
|\hat c^\top w_1 - \hat c^\top w_2|
&\le |\hat c^\top(w_1-\tilde w_2)| + |\hat c^\top(\tilde w_2-w_2)| \\
&\le \|\hat c\| \|w_1-\tilde w_2\|
  + \|\hat c\| \|\tilde w_2-w_2\| \\
&\le \|\hat c\| (\delta + D(S_2)).
\end{aligned}
\]

By symmetry we also have the bound $\|\hat c\| (\delta + D(S_1))$, taking the minimum completes the bound.
\end{proof}

\begin{proof}{Proof of \cref{thm:main}}
Using \eqref{eq:spo+},
\[
\ell_{S}(\hat c,c)=\big(\xi_S(c - 2\hat{c})-z_S^\star(c)\big)+2\hat c^\top w_S^\star(c).
\]
Hence,
\[
\bigl|\ell_{S_{\rm ref}}(\hat c,c_{\rm ref}) - \ell_{S_{\rm cl}}(\hat c,c_{\rm cl})\bigr|
\le T + B,
\]
where
\[
\begin{aligned}
T &:= \Big|
\big(\xi_{S_{\rm ref}}(c_{\rm ref} - 2\hat{c})-z_{S_{\rm ref}}^\star(c_{\rm ref})\big) \\
&\quad -\big(\xi_{S_{\rm cl}}(c_{\rm cl} - 2\hat{c})-z_{S_{\rm cl}}^\star(c_{\rm cl})\big)
\Big|,\\
B &:= 2\bigl|\hat c^\top(w^\star_{S_{\rm ref}}(c_{\rm ref}) - w^\star_{S_{\rm cl}}(c_{\rm cl}))\bigr|.
\end{aligned}
\]

\textbf{Step 1 (bound $T$ by diameter + Hausdorff).}
Add and subtract $\xi_{S_{\rm ref}}(c_{\rm cl}-2\hat c)-z_{S_{\rm ref}}^\star(c_{\rm cl})$:
\begin{dmath}
T \le 
\Big|
\big(\xi_{S_{\rm ref}}(c_{\rm ref} - 2\hat{c})-z_{S_{\rm ref}}^\star(c_{\rm ref})\big)
-
\big(\xi_{S_{\rm ref}}(c_{\rm cl} - 2\hat{c})-z_{S_{\rm ref}}^\star(c_{\rm cl})\big)
\Big|  
+
\Big|
\big(\xi_{S_{\rm ref}}(c_{\rm cl} - 2\hat{c})-z_{S_{\rm ref}}^\star(c_{\rm cl})\big)
-
\big(\xi_{S_{\rm cl}}(c_{\rm cl} - 2\hat{c})-z_{S_{\rm cl}}^\star(c_{\rm cl})\big)
\Big|.
\end{dmath}
The first term is bounded by \cref{lem:diameter-joint}:
\[
\le D_{\rm ref}\,\|c_{\rm ref}-c_{\rm cl}\| = D_{\rm ref} c_d.
\]
For the second term, use \cref{lem:support-set} on the support term, \cref{lem:zstar-set} on the value term, and \cref{lem:translation-invariance} on the Hausdorff distance $\delta$:
\[
\le \delta_N\|c_{\rm cl} - 2\hat{c}\| + \delta_N\|c_{\rm cl}\|.
\]
So
\[
T \le D_{\rm ref}\,c_d + \delta_N\big(\|c_{\rm cl} - 2\hat{c}\|+\|c_{\rm cl}\|\big).
\]
Swapping the roles of ref/cl yields
\[
T \le D_{\rm cl}\,c_d + \delta_N\big(\|c_{\rm ref} - 2\hat{c}\|+\|c_{\rm ref}\|\big),
\]
so we may take the minimum of the two bounds.

\textbf{Step 2 (bound $B$).}
Because $w^\star_{S_{\rm ref}}(c_{\rm ref})\in S_{\rm ref}$ and $w^\star_{S_{\rm cl}}(c_{\rm cl})\in S_{\rm cl}$,
\cref{lem:oracle-free} yields
\[
B \le 2\|\hat c\| (\delta_N + D_{\min}).
\]

Combining the bounds on $T$ and $B$ gives the claim.
\end{proof}
\begin{proof}{Proof of \cref{lem:dir-stability}}
Under $\rho$-strong convexity (\citep{polovinkin1996strongly}, Corollary 4), the support function
$\xi_S(\cdot)$ is differentiable on $\mathbb{R}^{d}\setminus\{0\}$ and its gradient is $\rho$-Lipschitz
on the unit sphere. Moreover, $\nabla \xi_S(p) = x_S(p)$. This yields the stated inequality.
The second claim follows from $w_S^\star(c)=x_S(c/\|c\|)$ for $c\neq 0$.
\end{proof}

\begin{proof}{Proof of \cref{lem:QG}}
(\citep{polovinkin1996strongly}, Corollary 4) This is a standard consequence of the supporting-ball characterization of $\rho$-strong convexity:
the supporting ball at direction $p$ yields the inequality
$\|x-x_S(p)\|^2 + 2\rho (x-x_S(p))^\top p \le 0$ for all $x\in S$.
\end{proof}

\begin{proof}{Proof of \cref{lem:set-stability}}
Fix $p \in \mathbb{R}^{d}, \|p\|=1,$ and let $x_1:=x_{S_1}(p)$, $x_2:=x_{S_2}(p)$, where $x_S := \arg \max_{x \in S} p^Tx$, and $p^Tx_S = \xi_S(p)$.
By the definition of Hausdorff distance, there exists $y\in S_2$ with
$\|y-x_1 \| \le \delta$, hence $p^\top y \ge p^\top x_1 - \delta$ by Cauchy-Schwarz inequality.

By \cref{lem:support-set} we have
$|\xi_{S_2}(p) - \xi_{S_1}(p)| \le \delta$, so $\xi_{S_2}(p) \le p^\top x_1 + \delta$.
Combining yields $\xi_{S_2}(p) - p^\top y \le 2\delta$.

Applying \cref{lem:QG} to $S_2$ gives
$\frac{1}{2\rho_2}\|y-x_2 \|^2 \le 2\delta$, hence $\|y-x_2 \| \le 2\sqrt{\rho_2\delta}$.
Therefore,
\[
\|x_1-x_2 \| \le \|x_1-y \| + \|y-x_2 \| \le \delta + 2\sqrt{\rho_2\delta}.
\]
Swapping the roles of $S_1,S_2$ yields $\|x_1-x_2 \| \le \delta + 2\sqrt{\rho_1\delta}$.
Taking the minimum provides the claim.
\end{proof}

\begin{proof}{Proof of \cref{lem:lemma4-sc}}
By the triangle inequality,
$$
\|w^\star_{S_{\rm ref}}(c_{\rm ref}) - w^\star_{S_{\rm cl}}(c_{\rm cl}) \|
\le
\|w^\star_{S_{\rm ref}}(c_{\rm ref}) - w^\star_{S_{\rm ref}}(c_{\rm cl}) \|
+
\|w^\star_{S_{\rm ref}}(c_{\rm cl}) - w^\star_{S_{\rm cl}}(c_{\rm cl}) \|.
$$
The first term is bounded by \cref{lem:dir-stability} (directional stability on $S_{\rm ref}$),
giving $\le \rho_{\rm ref}\|p(c_{\rm ref})-p(c_{\rm cl}) \|$.
The second term is bounded by \cref{lem:set-stability} at direction $p(c_{\rm cl})$,
giving $\le \delta + 2\sqrt{\rho_{\rm min}\delta}$. Therefore, the RHS yields $\leq \rho_{\rm ref} \|p(c_{\rm ref}) - p(c_{\rm cl}) \| + \delta +2\sqrt{\rho_{\min}\delta}.$

Similarly, by triangle inequality,
$$
\|w^\star_{S_{\rm ref}}(c_{\rm ref}) - w^\star_{S_{\rm cl}}(c_{\rm cl}) \|
\le
\|w^\star_{S_{\rm ref}}(c_{\rm ref}) - w^\star_{S_{\rm cl}}(c_{\rm ref}) \|
+
\|w^\star_{S_{\rm cl}}(c_{\rm ref}) - w^\star_{S_{\rm cl}}(c_{\rm cl}) \|,
$$
where here the first term is bounded by \cref{lem:set-stability} and the second by \cref{lem:dir-stability}, and the RHS yields $\leq \rho_{\rm cl} \|p(c_{\rm ref}) - p(c_{\rm cl}) \| + \delta +2\sqrt{\rho_{\min}\delta}.$

Taking the minimum of the two resulting bounds, since $\delta + 2\sqrt{\rho_{\min} \delta}$ is the same across both inequalities, gives us,

$$
\|w^\star_{S_{\rm ref}}(c_{\rm ref}) - w^\star_{S_{\rm cl}}(c_{\rm cl}) \|
\le
\rho_{\min} \|p(c_{\rm ref}) - p(c_{\rm cl}) \| + \delta + 2\sqrt{\rho_{\min}\delta}.
$$

\end{proof}

\begin{proof}{Proof of \cref{lem:uniform-convergence}}
By \cref{lem:pred_lipschitz}, the SPO+ loss is $(2D_S)$-Lipschitz in the prediction
argument. The result follows from the vector contraction inequality for the Rademacher complexity
and standard concentration \citep{liu2021risk}.
\end{proof}

\begin{proof}{Proof of \cref{thm:federation-gain}}
\textbf{Step 1 (Local ERM bound).}
Apply \cref{lem:uniform-convergence} to client $j$ with confidence $\delta/2$.
For client $j$, with probability $\ge 1-\delta/2$,
\[
\sup_{g\in\mathcal{H}} \bigl|R_j(g) - \widehat{R}_j(g)\bigr|
\le \varepsilon_j(n_j,\delta/2).
\]
The standard ERM argument gives:
\begin{align*}
R_j(\hat g_j^{\rm loc})
&\le \widehat{R}_j(\hat g_j^{\rm loc}) + \varepsilon_j \\
&\le \widehat{R}_j(g_j^\star) + \varepsilon_j
&&\text{(by optimality of $\hat g_j^{\rm loc}$)} \\
&\le R_j(g_j^\star) + 2\varepsilon_j.
\end{align*}

\textbf{Step 2 (Federated ERM bound on mixture risk).}
Apply \cref{lem:uniform-convergence} to the mixture distribution with sample size $N$
and confidence $\delta/2$. With probability $\ge 1-\delta/2$,
\[
\sup_{g\in\mathcal{H}} \bigl|R_{\rm mix}(g) - \widehat{R}_{\rm mix}(g)\bigr|
\le \varepsilon_{\rm mix}(N,\delta/2).
\]
By the same ERM argument:
\[
R_{\rm mix}(\hat g^{\rm fed}) \le R_{\rm mix}(g_{\rm mix}^\star) + 2\varepsilon_{\rm mix}.
\]

\textbf{Step 3 (Relating client risk to mixture risk).}
By \cref{def:discrepancy},
\[
R_j(g) \le R_{\rm mix}(g) + \Delta_j
,\quad
R_{\rm mix}(g) \le R_j(g) + \Delta_j
\]
for all $g\in\mathcal{H}$. Therefore:
\begin{align*}
R_j(\hat g^{\rm fed})
&\le R_{\rm mix}(\hat g^{\rm fed}) + \Delta_j \\
&\le R_{\rm mix}(g_{\rm mix}^\star) + 2\varepsilon_{\rm mix} + \Delta_j \\
&\le R_{\rm mix}(g_j^\star) + 2\varepsilon_{\rm mix} + \Delta_j
 \\
&\le R_j(g_j^\star) + 2\varepsilon_{\rm mix} + 2\Delta_j,
\end{align*}
by optimality of $g_{\rm mix}^\star$, which completes the proof.
\end{proof}

\begin{proof}{Proof of \cref{prop:delta-bound}}
For any $g\in\mathcal{H}$, using the shared-prediction specialization of \cref{thm:main}
(setting $\hat c = g(x)$ eliminates the Lipschitz prediction-shift terms),
\[
\bigl|\ell_{S_i}(g(x),c^{(i)}) - \ell_{S_j}(g(x),c^{(j)})\bigr|
\;\le\; \mathcal{H}_{ij}(g(x)).
\]
Taking expectations and using the definition of $R_{\rm mix}$,
$$
\bigl|R_j(g) - R_{\rm mix}(g)\bigr|
= \Bigl|\sum_{i=1}^m \alpha_i \bigl(R_j(g) - R_i(g)\bigr)\Bigr|
\le \sum_{i=1}^m \alpha_i \mathbb{E}\bigl[\mathcal{H}_{ij}(g(x))\bigr].
$$
Taking the supremum over $g\in\mathcal{H}$ yields the result.

\end{proof}

\section{Additional Results}
\label{appsec:additional-results}
\subsection{Connection to estimation-based federation gain}
\label{app:extensions}
We now show that controlling prediction error controls SPO+ excess risk, so existing minimax estimation results immediately imply corresponding guarantees for SPO+ federation gain.

\begin{lemma}[Prediction heterogeneity via Lipschitzness of $\ell_{\mathrm{SPO}+}$]
\label{lem:pred_lipschitz}
Let $S \subset \mathbb{R}^d$ be a bounded feasible region with diameter $D_S < \infty$.
Fix any realized cost vector $c \in \mathbb{R}^d$. Then the SPO+ loss is Lipschitz
in the prediction argument: for all $\hat c_1,\hat c_2 \in \mathbb{R}^d$,
\[
\bigl|\ell_{\mathrm{SPO}+}^{S}(\hat c_1,c)-\ell_{\mathrm{SPO}+}^{S}(\hat c_2,c)\bigr|
\;\le\; 2D_S \,\|\hat c_1-\hat c_2\|.
\]
\end{lemma}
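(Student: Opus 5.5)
The plan is to use the support-function decomposition of \eqref{eq:spo+}. For a fixed $c$,
\[
\ell^S_{\mathrm{SPO+}}(\hat c,c) = \xi_S(c-2\hat c) + 2\hat c^\top w_S^\star(c) - z_S^\star(c).
\]
The last term $z_S^\star(c)$ does not depend on $\hat c$, so it cancels when we take the difference at $\hat c_1$ and $\hat c_2$. The oracle point $w_S^\star(c)$ is also fixed once $c$ is fixed. The difference therefore reduces to
\[
\ell^S_{\mathrm{SPO+}}(\hat c_1,c)-\ell^S_{\mathrm{SPO+}}(\hat c_2,c) = \big[\xi_S(c-2\hat c_1)-\xi_S(c-2\hat c_2)\big] + 2(\hat c_1-\hat c_2)^\top w_S^\star(c).
\]

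The key step is to treat these two pieces jointly rather than separately. Bounding them separately would only give something like $2\|\hat c_1-\hat c_2\|\sup_{w\in S}\|w\|$, which depends on where $S$ sits relative to the origin rather than on $D_S$.

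For the support-function piece, let $w_1$ be a maximizer of $(c-2\hat c_1)^\top w$ over $S$. It exists by \cref{ass:compact}, or simply because $S$ is bounded and we may pass to its closure. Then
\[
\xi_S(c-2\hat c_1)-\xi_S(c-2\hat c_2) \le (c-2\hat c_1)^\top w_1-(c-2\hat c_2)^\top w_1 = -2(\hat c_1-\hat c_2)^\top w_1 .
\]
Adding the decision term gives
\[
\ell(\hat c_1,c)-\ell(\hat c_2,c) \le 2(\hat c_1-\hat c_2)^\top\big(w_S^\star(c)-w_1\big) \le 2\|\hat c_1-\hat c_2\|\,\|w_S^\star(c)-w_1\| \le 2D_S\|\hat c_1-\hat c_2\|,
\]
where the middle step is Cauchy--Schwarz and the last uses that both points lie in $S$.

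For the reverse inequality, swap the roles of $\hat c_1$ and $\hat c_2$, using a maximizer $w_2$ for $c-2\hat c_2$. Combining the two one-sided bounds gives the absolute-value statement.

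The only delicate point is the one flagged above: the difference must be written as a single inner product with $w_S^\star(c)-w_1$, so that $D_S$ appears rather than the radius of $S$. An alternative cross-check is \cref{lem:translation-invariance}: translate $S$ so that it contains the origin, which reduces $\sup_{w\in S}\|w\|$ to at most $D_S$.
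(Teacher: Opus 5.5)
Your argument is correct and is essentially the paper's proof. Your maximizer $w_1$ is exactly $w_S^\star(2\hat c_1-c)$, so your one-sided bound $\ell(\hat c_1,c)-\ell(\hat c_2,c)\le 2(\hat c_1-\hat c_2)^\top\bigl(w_S^\star(c)-w_1\bigr)$ is the subgradient inequality for the subgradient $2\bigl(w_S^\star(c)-w_S^\star(2\hat c-c)\bigr)$ that the paper uses; you have unpacked the ``bounded subgradients imply Lipschitz'' step instead of citing it. (Minor aside: your closing translation cross-check, if the two pieces are then bounded separately, only yields the constant $4D_S$ rather than $2D_S$, so it does not replace the joint argument.)
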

\begin{proof}{Proof of \cref{lem:pred_lipschitz}}
For fixed $c$, the function $\hat c \mapsto \ell_{\mathrm{SPO}+}^{S}(\hat c,c)$ is convex
but generally non-smooth. A valid subgradient at $\hat c$ is 
\[
g(\hat c;c) \;=\; 2\bigl(w_S^\star(c) - w_S^\star(2\hat c - c)\bigr),
\]
where $w_S^\star(u) \in \arg\min_{w\in S} u^\top w$ denotes an optimal solution of the
downstream problem under objective $u$ (choose any selector if non-unique) \citep{elmachtoub2022smart}.
Since both $w_S^\star(c)$ and $w_S^\star(2\hat c-c)$ lie in $S$, we have
\[
\|g(\hat c;c)\|
\;=\; 2\|w_S^\star(c) - w_S^\star(2\hat c-c)\|
\;\le\; 2D_S.
\]
A convex function whose subgradients are norm bounded by $L$ is $L$-Lipschitz
in the dual norm, hence the claim follows with $L=2D_S$ (Lemma 2.6, \citep{shalev2012online}).
\end{proof}

\begin{corollary}[From estimation gain to SPO+ gain]
\label{cor:estimation-to-spo}
\citep{ho2022risk}. Let $\hat f$ be a predictor (local or federated) and $f_j^\star$ be the optimal predictor
for client $j$. By \cref{lem:pred_lipschitz},
$$
\mathbb{E}\bigl[R_j(\hat f) - R_j(f_j^\star)\bigr]
\;\le\; 2D_j \cdot \mathbb{E}\bigl[\|\hat f(x) - f_j^\star(x) \|\bigr]
\;\le\; 2D_j \sqrt{\mathbb{E}\bigl[\|\hat f(x) - f_j^\star(x) \|^2\bigr]}.
$$
Consequently, if the squared estimation error satisfies
$\mathbb{E}[\|\hat f - f_j^\star \|^2] \le R_j^{\rm (est)}$, then
\[
\mathbb{E}\bigl[R_j(\hat f) - R_j(f_j^\star)\bigr] \;\le\; 2D_j\sqrt{R_j^{\rm (est)}}.
\]
Therefore, if $\mathrm{FG}_j^{\rm est}$ denotes the ratio of local to federated minimax
estimation risks, then
\[
\mathrm{FG}_{j,\mathrm{bound}}^{\mathrm{SPO}+} \;\gtrsim\; \sqrt{\mathrm{FG}_j^{\rm est}}.
\]
\end{corollary}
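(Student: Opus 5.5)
The statement immediately above is \cref{cor:estimation-to-spo}. I would obtain it by chaining \cref{lem:pred_lipschitz} with Jensen's inequality, and then reading off the federation-gain ratio by comparing the two resulting upper bounds.

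\textbf{Step 1: pointwise Lipschitz bound.} Fix a covariate $x$ and a realized cost $c$ with $(x,c)\sim P_j$. Apply \cref{lem:pred_lipschitz} with feasible set $S_j$, $\hat c_1=\hat f(x)$ and $\hat c_2=f_j^\star(x)$. This gives
\[
\ell_{S_j}(\hat f(x),c)-\ell_{S_j}(f_j^\star(x),c)\le 2D_j\|\hat f(x)-f_j^\star(x)\|.
\]
The key point is that the bound does not depend on $c$. Taking expectation over $(x,c)\sim P_j$ therefore gives $R_j(\hat f)-R_j(f_j^\star)\le 2D_j\,\mathbb{E}_x\|\hat f(x)-f_j^\star(x)\|$.

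If $\hat f$ is data-dependent, I take a further expectation over the training sample. I would handle this by conditioning on the sample, so $\hat f$ is fixed, and then applying the tower property.

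\textbf{Step 2: Jensen.} Since $t\mapsto\sqrt t$ is concave, $\mathbb{E}\|Z\|\le\sqrt{\mathbb{E}\|Z\|^2}$ for $Z=\hat f(x)-f_j^\star(x)$, taken jointly over the sample and $x$. Substituting the hypothesis $\mathbb{E}\|\hat f-f_j^\star\|^2\le R_j^{(\rm est)}$ yields
\[
\mathbb{E}\bigl[R_j(\hat f)-R_j(f_j^\star)\bigr]\le 2D_j\sqrt{R_j^{(\rm est)}}.
\]

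\textbf{Step 3: federation-gain ratio.} Let $R^{\rm est}_{j,\rm loc}$ and $R^{\rm est}_{j,\rm fed}$ be the local and federated minimax estimation risks, and set $\mathrm{FG}_j^{\rm est}=R^{\rm est}_{j,\rm loc}/R^{\rm est}_{j,\rm fed}$. Define $\mathrm{FG}^{\rm SPO+}_{j,\rm bound}$ as the ratio of the corresponding SPO+ excess-risk upper bounds from Step 2. The common factor $2D_j$ cancels, leaving
\[
\mathrm{FG}^{\rm SPO+}_{j,\rm bound}=\frac{2D_j\sqrt{R^{\rm est}_{j,\rm loc}}}{2D_j\sqrt{R^{\rm est}_{j,\rm fed}}}=\sqrt{\mathrm{FG}_j^{\rm est}}.
\]
The $\gtrsim$ in the statement absorbs constants. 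It also covers the case where the estimation bounds hold only up to constants, or where the federated estimator targets a shifted predictor; there one adds a bias term in $R^{\rm est}_{j,\rm fed}$.

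\textbf{Main obstacle.} The delicate point is interpretive rather than technical. The final claim compares upper bounds, not actual risks, so I would make clear that $\mathrm{FG}^{\rm SPO+}_{j,\rm bound}$ is a ratio of bounds. I would also require that the federated estimation risk be measured against the client-specific target $f_j^\star$, so that any heterogeneity bias enters through $R^{(\rm est)}_{j,\rm fed}$ and the inequality is not vacuous.

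A secondary check is that $f_j^\star$ in the statement (the estimation target) coincides with, or performs at least as well as, the SPO+ risk minimizer $g_j^\star$. The inequality holds for any fixed comparator, so it suffices to interpret it with $f_j^\star$ as stated.
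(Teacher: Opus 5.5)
Your proposal is correct and follows the paper's own argument. You integrate the pointwise Lipschitz bound of \cref{lem:pred_lipschitz} over $(x,c)\sim P_j$ and over the training sample, then apply $\mathbb{E}\|Z\|\le\sqrt{\mathbb{E}\|Z\|^2}$, and finally take the ratio of the local and federated bounds so that $2D_j$ cancels; the paper cites Cauchy--Schwarz for the middle step where you cite concavity of the square root, which is equivalent here, and your caveat that $\mathrm{FG}^{\mathrm{SPO}+}_{j,\mathrm{bound}}$ is a ratio of upper bounds matches the paper's own remark on its interpretation.
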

\begin{proof}{Proof of \cref{cor:estimation-to-spo}}
The first inequality follows from \cref{lem:pred_lipschitz} and Jensen's inequality.
The second uses Cauchy--Schwarz. The gain relationship follows by taking ratios of the
local and federated bounds.
\end{proof}

\cref{cor:estimation-to-spo} tells us that by defining a ratio of risk upper bounds,
\begin{dmath*}
\mathrm{FG}_{j,\mathrm{bound}}^{\mathrm{SPO}+} = \frac{\text{local SPO+ risk upper bound}}{\text{fed. SPO+ risk upper bound}},
\end{dmath*}
we can relate the bounds to an upper bound of the squared estimation risk, here the minimax risk. We use $\gtrsim$ to account for potential constants in the minimax risks, but the key takeaway is that SPO+ federated gain scales with a rate equivalent to the square root of the MSE minimax risk ratio. In other words, controlling prediction error controls SPO+ excess risk, and a factor of $k$ improvement in minimax MSE translates to a factor of $\sqrt{k}$ improvement in the SPO+ risk bound.

In \citet{su2023non}, the authors provide a minimax estimation gain bound under a subspace heterogeneity model. Plugging their result into \cref{cor:estimation-to-spo} provides a lower bound as a function of local sample size, total sample size, ambient dimension, noise level, and a heterogeneity term $\Gamma$. We defer the full expression to \cref{rem:su-subspace} in Appendix \cref{appsec:additional-results}.

\begin{remark}[Interpretation of Bound Ratios]
$\mathrm{FG}_{j,\mathrm{cert}}$ is a PAC certified gain: it compares our high-probability excess-risk upper bounds (local vs federated + heterogeneity penalty). In contrast, $\mathrm{FG}_{j,\mathrm{bound}}^{\mathrm{SPO}+}$ comes from relating expectations of SPO+ risk to prediction estimation error and then using the minimax estimation gain comparisons, i.e., it should be interpreted as a scaling measure, and not as a certificate of the same type.
\end{remark}

\begin{remark}[Instantiation with Su et al.'s subspace model]
\label{rem:su-subspace}
In the subspace model of \citep{su2023non}, the minimax estimation gain satisfies
\[
\mathrm{FG}_j^{\rm est} \;\ge\;
\frac{c_1}{\kappa}\cdot
\frac{(\sigma^2 d/n_j \wedge B^2) + B^2(1-n_j/d,0)_+}{\sigma^2 d/N + \Gamma^2},
\]
where $\Gamma$ captures cross-client heterogeneity (and where $a \wedge b \coloneqq \min\{a,b\}.$) Plugging into \cref{cor:estimation-to-spo}:
$$
\mathrm{FG}_{j,\mathrm{bound}}^{\mathrm{SPO}+} \;\gtrsim\;
\sqrt{\frac{c_1}{\kappa}}\cdot
\sqrt{\frac{(\sigma^2 d/n_j \wedge B^2) +B^2(1-n_j/d,0)_+}{\sigma^2 d/N + \Gamma^2}}.
$$
\end{remark}
\subsection{Extension to SPO}

The SPO+ loss is a surrogate for the true SPO loss. \citet{liu2021risk} establish a calibration relationship that allows us to convert SPO+ bounds to SPO bounds for the federated case.

\begin{definition}[SPO regret]
The SPO regret of predictor $g$ on client $j$ is
\[
R_j^{\rm SPO}(g) := \mathbb{E}_{(x,c)\sim P_j}\bigl[c^\top w_{S_j}^\star(g(x)) - z_{S_j}^\star(c)\bigr].
\]
\end{definition}

\begin{theorem}[Calibration]
\label{thm:calibration}

{\citep{liu2021risk}}

\begin{enumerate}[label=(\roman*)]
\item \textbf{General (polyhedral) case:} There exists a calibration function $\psi$ such that
\[
R_j^{\rm SPO}(g) - R_j^{\rm SPO}(g_j^\star)
\;\le\; \psi\bigl(R_j(g) - R_j(g_j^\star)\bigr),
\]
with $\psi(\varepsilon) = O(\sqrt{\varepsilon})$ in the polyhedral case. This yields SPO excess
risk of order $O(n^{-1/4})$.

\item \textbf{Strongly convex feasible set:} Under \cref{ass:strong-convex-set},
the calibration becomes linear: $\psi(\varepsilon) = O(\varepsilon)$. This yields SPO excess
risk of order $O(n^{-1/2})$.
\end{enumerate}
\end{theorem}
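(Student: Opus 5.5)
The plan is not to re-derive calibration from scratch. Instead, I would reduce both parts of \cref{thm:calibration} to the conditional (pointwise-in-$x$) calibration results of \citet{liu2021risk}, applied client by client, and then integrate over $x$. Fix a client $j$ and work with $S_j$ and $P_j$. For each $x$, define the conditional excess SPO+ risk of a prediction $\hat c$ as $\mathbb{E}[\ell_{S_j}(\hat c,c)\mid x]-\inf_{c'}\mathbb{E}[\ell_{S_j}(c',c)\mid x]$, and define the conditional SPO excess risk analogously. Under the distributional conditions of \citet{liu2021risk}, the conditional minimiser of SPO+ is $\bar c(x)=\mathbb{E}[c\mid x]$ (Fisher consistency, \citet{elmachtoub2022smart}). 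Those conditions are: the conditional law of $c$ given $x$ is centrally symmetric about its mean, it has a density bounded below on a neighbourhood of the mean, and $\|\bar c(x)\|$ is bounded away from zero.

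\textbf{Step 1 (conditional calibration).} I would import the two conditional inequalities of \citet{liu2021risk}. In the polyhedral case they give a function $\psi_{\rm poly}(\varepsilon)=O(\sqrt{\varepsilon})$, with constants depending on $D_{S_j}$, the density lower bound and the polytope geometry. In the $\rho$-strongly convex case (\cref{ass:strong-convex-set}) they give $\psi_{\rm sc}(\varepsilon)=O(\varepsilon)$, with constants depending on $\rho$, $D_{S_j}$ and the lower bound on $\|\bar c(x)\|$. The strongly convex mechanism is the same one behind \cref{lem:dir-stability} and \cref{lem:QG}. Optimizer stability makes the SPO excess at $\hat c$ scale like $\|\hat c/\|\hat c\|-\bar c/\|\bar c\|\|^2$ times a curvature constant, while quadratic growth makes the SPO+ excess grow at least quadratically in that same discrepancy. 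The ratio is therefore bounded, which gives a linear $\psi$.

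\textbf{Step 2 (integration).} Take expectations over $x$. In the polyhedral case $\psi$ is concave, so Jensen's inequality passes the pointwise bound to the population level. In the strongly convex case the map is linear, so this step is trivial.

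\textbf{Step 3 (rates).} I would combine the integrated inequality with the local bound \eqref{eq:local-bound} of \cref{thm:federation-gain}. For standard classes $\mathfrak{R}_n(\mathcal H)=O(n^{-1/2})$, so the SPO+ excess risk is $O(n^{-1/2})$ with high probability. Applying $\psi$ then gives $O(n^{-1/4})$ SPO excess risk in the polyhedral case and $O(n^{-1/2})$ in the strongly convex case. The federated bound \eqref{eq:fed-bound} transfers in the same way, with $2\Delta_j$ carried inside $\psi$.

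\textbf{Main obstacle.} The obstacle I expect is matching benchmarks. The calibration of \citet{liu2021risk} is relative to the Bayes-optimal predictor $\bar c$, whereas the statement uses the in-class minimiser $g_j^\star$. I would first try a realizability assumption, namely $\bar c\in\mathcal H$, so that $g_j^\star=\bar c$ almost everywhere and the benchmarks coincide. If realizability is not assumed, an approximation-error term must be added. A second, smaller issue is that \cref{ass:bounded} bounds $\|c\|$ but not $\|\mathbb{E}[c\mid x]\|$ away from zero. The strongly convex constant needs the latter, so I would state it explicitly as an additional hypothesis.
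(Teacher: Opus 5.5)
Your route matches the paper's. The paper gives no argument beyond the citation to \citet{liu2021risk}, and your three steps (conditional calibration, integration over $x$, substitution of \eqref{eq:local-bound}) are how that citation has to be unpacked.

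Your benchmark caveat is a real correction to the statement as printed. \citet{liu2021risk} calibrate against Bayes risks, and with an in-class $g_j^\star$ the inequality can fail outright: a second in-class SPO+ minimiser with larger SPO risk would force $\psi(0)>0$.

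Two small adjustments:
\begin{itemize}
\item Realizability does not give $g_j^\star=\bar c$ almost everywhere, because SPO+ minimisers need not be unique. You do not need it. Under realizability $g_j^\star$ attains the Bayes SPO+ risk, and $\psi(0)=0$ then makes its SPO risk Bayes-optimal as well.
\item Your extra hypothesis on $\|\bar c(x)\|$ is redundant under your own conditions. A conditional density bounded below on a ball around $\bar c(x)$, together with $\|c\|\ge C_{\min}$ almost surely, forces that ball to miss the open ball of radius $C_{\min}$. Hence $\|\bar c(x)\|\ge C_{\min}$.
\end{itemize}

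The step that does not go through is the import in part (ii). \citet{liu2021risk} prove linear calibration for feasible regions that are level sets $\{w: f(w)\le r\}$ of a strongly convex and smooth $f$. Their constants depend on the parameters of $f$. \cref{ass:strong-convex-set} does not imply this: the intersection of two $\rho$-balls is $\rho$-strongly convex but has corners. A sublevel set of a differentiable $f$ above its minimum has a unique normal at every boundary point, so the lens is no such level set. You therefore cannot quote their result with constants in $\rho$.

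Your mechanism sentence does not close this gap either.
\begin{itemize}
\item On the SPO side, \cref{lem:dir-stability} does suffice. It gives conditional SPO excess at most $\rho\|\bar c\|\,\|\hat c/\|\hat c\|-\bar c/\|\bar c\|\|^2$.
\item On the SPO+ side, central symmetry turns the conditional excess into $\mathbb{E}[\xi_S(q)-q^\top x_S(p)]$, with $q=c-2\hat c$ and $p=c-2\bar c$.
\item \cref{lem:QG} bounds this below only by $\mathbb{E}\bigl[\tfrac{\|q\|}{2\rho}\|x_S(q)-x_S(p)\|^2\bigr]$. That is a discrepancy of support points, taken at the directions of $c-2\hat c$ and $c-2\bar c$ rather than those of $\hat c$ and $\bar c$.
\end{itemize}
To compare the two sides you need two further ingredients. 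One is a reverse-Lipschitz bound $\|x_S(q)-x_S(p)\|\ge r_{\rm in}\,\|q/\|q\|-p/\|p\|\|$, which comes from an inner rolling ball, i.e.\ from smoothness of $f$. The other is the density lower bound, to pass from $c-2\hat c$ back to $\hat c$.

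Strong convexity with radius $\rho$ gives no such reverse bound: at a corner of the lens, distinct directions share a support point. So either adopt the level-set hypothesis of \citet{liu2021risk} for each $S_j$, or supply a separate argument for general $\rho$-strongly convex sets.
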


\begin{corollary}[SPO federation gain]
\label{cor:spo-gain}
Comparing the local and federated SPO upper bounds gives us:
\begin{enumerate}[label=(\roman*)]
\item \textbf{Polyhedral case:}
\[
\mathrm{FG}_{j,\mathrm{bound}}^{\rm SPO}
\;\approx\; \sqrt{\mathrm{FG}_{j,\mathrm{bound}}^{\mathrm{SPO}+}}
\;\approx\; \bigl(\mathrm{FG}_j^{\rm est}\bigr)^{1/4}.
\]

\item \textbf{Strongly convex case:}
\[
\mathrm{FG}_{j,\mathrm{bound}}^{\rm SPO}
\;\approx\; \mathrm{FG}_{j,\mathrm{bound}}^{\mathrm{SPO}+}
\;\approx\; \bigl(\mathrm{FG}_j^{\rm est}\bigr)^{1/2}.
\]
\end{enumerate}

\end{corollary}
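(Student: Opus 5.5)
The plan is to obtain \cref{cor:spo-gain} by pushing the two SPO+ excess-risk upper bounds of \cref{thm:federation-gain} through the calibration function $\psi$ of \cref{thm:calibration}, and then taking the ratio of the resulting SPO upper bounds. Concretely, on the high-probability event of \cref{thm:federation-gain}, I apply \cref{thm:calibration} separately to $g=\hat g_j^{\rm loc}$ and to $g=\hat g^{\rm fed}$, both with the common comparator $g_j^\star$. This gives
\[
R_j^{\rm SPO}(\hat g_j^{\rm loc})-R_j^{\rm SPO}(g_j^\star)\le \psi\bigl(2\varepsilon_j(n_j,\delta/2)\bigr)
\]
for the local ERM, and
\[
R_j^{\rm SPO}(\hat g^{\rm fed})-R_j^{\rm SPO}(g_j^\star)\le \psi\bigl(2\varepsilon_{\rm mix}(N,\delta/2)+2\Delta_j\bigr)
\]
for the federated ERM. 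Both bounds use that $\psi$ is nondecreasing, which I would verify from, or impose on, the form of $\psi$ in \citet{liu2021risk}.

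The two cases then come from the growth of $\psi$.

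\textbf{Polyhedral case.} Here $\psi(\varepsilon)=\kappa\sqrt{\varepsilon}$ up to constants, where $\kappa$ collects those constants. The constant $\kappa$ appears in both numerator and denominator and cancels, so
\[
\mathrm{FG}^{\rm SPO}_{j,\rm bound}=\frac{\psi(\text{local SPO+ bound})}{\psi(\text{fed.\ SPO+ bound})}\approx\sqrt{\mathrm{FG}^{\mathrm{SPO}+}_{j,\rm bound}}.
\]

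\textbf{Strongly convex case.} Under \cref{ass:strong-convex-set}, $\psi$ is linear. The ratio of SPO bounds then equals the ratio of SPO+ bounds, again up to constants.

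The second approximation in each case comes from substituting \cref{cor:estimation-to-spo}, namely $\mathrm{FG}^{\mathrm{SPO}+}_{j,\rm bound}\gtrsim\sqrt{\mathrm{FG}^{\rm est}_j}$. This yields the exponents $1/4$ in the polyhedral case and $1/2$ in the strongly convex case. I would state these as scaling relations ($\approx$, $\gtrsim$) rather than certified inequalities, in the spirit of the interpretation remark on bound ratios.

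The main obstacle is that \cref{thm:calibration} is not a pure black box, for two reasons.
\begin{itemize}
\item The calibration of \citet{liu2021risk} is stated relative to the SPO+-Bayes-optimal predictor under distributional conditions (e.g.\ conditional symmetry or density lower bounds on $c\mid x$). Our comparator $g_j^\star$ is instead the minimizer over $\mathcal H$.
\item Those conditions are per-client, whereas $\hat g^{\rm fed}$ is trained on the mixture.
\end{itemize}
I would first try to handle both by assuming well-specification, i.e.\ the Bayes predictor of client $j$ lies in $\mathcal H$. This makes $g_j^\star$ the calibration comparator. Applying calibration only under $P_j$ then suffices, since the federated bound is already expressed in terms of client $j$'s SPO+ risk via $\Delta_j$. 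The residual constants (the factors of $2$, and the dependence of the calibration constants on $D_j$, $\rho$ and the noise parameters) must be checked to be the same in numerator and denominator, so that they cancel in the ratio. This cancellation is exactly what justifies writing $\approx$ instead of an exact equality.
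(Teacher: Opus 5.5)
Your mechanism matches the chain the paper records in \cref{tab:bounds_summary}: push a pair of SPO+ excess-risk bounds through the nondecreasing calibration function $\psi$ of \cref{thm:calibration}, let the constants cancel in the ratio, and then invoke \cref{cor:estimation-to-spo}. The gap is in \emph{which} pair of SPO+ bounds you feed into $\psi$.

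You use the PAC bounds of \cref{thm:federation-gain}. The ratio you actually obtain is therefore $\sqrt{\varepsilon_j(n_j,\delta/2)/(\varepsilon_{\rm mix}(N,\delta/2)+\Delta_j)}$, the square root of the certified gain $\mathrm{FG}_{j,\mathrm{cert}}$. The quantity $\mathrm{FG}^{\mathrm{SPO}+}_{j,\mathrm{bound}}$ in the statement is a different object. It is the ratio of the estimation-derived bounds $2D_j\sqrt{R_j^{\rm (est)}}$ for the local and federated estimators, and it is the only ratio that \cref{cor:estimation-to-spo} controls. The very remark on bound ratios that you cite stresses that these two gains are different objects, not certificates of the same type.

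Nothing in the paper relates $\varepsilon_j/(\varepsilon_{\rm mix}+\Delta_j)$ to $\mathrm{FG}^{\rm est}_j$, and in general they are not comparable. The former carries the optimization-heterogeneity penalty $\Delta_j$ and Rademacher terms scaled by $D_j$ versus $D_{\max}$. The latter carries estimation heterogeneity, e.g.\ $\Gamma$ in \cref{rem:su-subspace}. So your first approximation is valid as a statement about $\mathrm{FG}_{j,\mathrm{cert}}$. However, the substitution that produces the exponents $1/4$ and $1/2$ does not go through as written.

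The fix is to run the whole chain on one consistent pair of bounds. Start from $\mathbb{E}[R_j(\hat f)-R_j(f_j^\star)]\le 2D_j\sqrt{R_j^{\rm (est)}}$, once with $\hat f$ the local predictor and once with $\hat f$ the federated one. Then apply $\psi$. Because this bound is in expectation, you need concavity of $\psi$, which holds for both $\sqrt{\cdot}$ and linear $\psi$, to get $\mathbb{E}[\psi(X)]\le\psi(\mathbb{E}X)$ before using monotonicity. Taking the ratio, $2D_j$ and the calibration constants cancel. This gives $(\mathrm{FG}^{\rm est}_j)^{1/4}$ in the polyhedral case and $(\mathrm{FG}^{\rm est}_j)^{1/2}$ in the strongly convex case, read as $\gtrsim$-type scaling relations.

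Your caveat about the calibration comparator (Bayes-optimal versus class minimizer, and per-client distributional conditions) is legitimate, and the paper glosses over it. A well-specification assumption resolves it here too, and it also makes $f_j^\star$ in \cref{cor:estimation-to-spo} the correct comparator.
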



Table \ref{tab:bounds_summary} summarizes the chain of conversions. Strong convexity of the feasible set removes one
square root in the SPO$\to$SPO+ calibration step by making the optimizer stable
(\cref{cor:thm1-sc}).

\begin{table}[t]
\begin{center}
\caption{Summary of gain scaling}\label{tab:bounds_summary}
\begin{tabular}{lcc}
\hline
\textbf{Quantity} & \textbf{Polyhedral} & \textbf{Strongly Convex} \\
\hline
Estimation gain & $\mathrm{FG}_j^{\rm est}$ & $\mathrm{FG}_j^{\rm est}$ \\
SPO+ bound gain & $(\mathrm{FG}_j^{\rm est})^{1/2}$ & $(\mathrm{FG}_j^{\rm est})^{1/2}$ \\
SPO bound gain & $(\mathrm{FG}_j^{\rm est})^{1/4}$ & $(\mathrm{FG}_j^{\rm est})^{1/2}$ \\
\hline
\end{tabular}
\end{center}
\end{table}

\subsection{SPO+ as a Bregman Divergence}
\begin{proposition}[Strongly convex set calibration]
\label{prop:calibration}
Let $D_f(x, y) = f(x) - f(y) - \langle \nabla f(y),\, x - y \rangle$ denote the Bregman divergence associated with a convex function~$f$. Under \cref{ass:strong-convex-set}, for any hypothesis $f \in \mathcal{H}$ and any client~$j$,
$$
  R_{\mathrm{SPO+}}(\hat{f}) - R_{\mathrm{SPO+}}(f^\star_j)
  \leq\ \frac{2}{\rho}\ \mathrm{MSE}_j(\hat{f}),
$$
where $f^\star_j \in \arg\min_{f \in \mathcal{H}} R_{\mathrm{SPO+}}(f)$.
\end{proposition}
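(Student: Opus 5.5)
The plan is to express the SPO+ excess risk as an expected Bregman divergence of a smooth convex potential built from the support function, and then to control that divergence quadratically using the curvature of a $\rho$-strongly convex set.

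The natural potential is $\phi(u):=-z^\star_{S}(u)=\xi_S(-u)$. It is convex, and under \cref{ass:strong-convex-set} it is differentiable away from the origin with $\nabla\phi(u)=-w^\star_S(u)$ by \cref{lem:dir-stability}.

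\textbf{Step 1: pointwise Bregman identity.} Rewrite the SPO+ loss \eqref{eq:spo+} with $u=2\hat c-c$:
\[
\ell^S_{\mathrm{SPO+}}(\hat c,c)=\xi_S(c-2\hat c)+2\hat c^\top w^\star_S(c)-z^\star_S(c).
\]
Write $\xi_S(c-2\hat c)=\phi(2\hat c-c)$ and $-z_S^\star(c)=\phi(c)$. Using $\phi(c)=-c^\top w_S^\star(c)$, the loss becomes
\[
\phi(2\hat c-c)-\phi(c)-\langle\nabla\phi(c),(2\hat c-c)-c\rangle=D_\phi(2\hat c-c,\,c).
\]
So SPO+ is exactly a Bregman divergence between $2\hat c-c$ and $c$, whose arguments differ by $2(\hat c-c)$.

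\textbf{Step 2: quadratic upper bound on the divergence.} I would show $D_\phi(x,y)\le \frac{L}{2}\|x-y\|^2$ with $L$ the Lipschitz constant of $\nabla\phi$ along the segment. \cref{lem:dir-stability} only gives Lipschitzness of $u\mapsto w^\star(u)$ on the unit sphere with constant $\rho$. By $0$-homogeneity of $w^\star$, on $\{\|u\|\ge C_{\min}\}$ this becomes Lipschitz with constant roughly $\rho/C_{\min}$, or $\rho/\|c\|$ locally.

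An alternative route avoids the segment issue. By convexity and the three-point property,
\[
D_\phi(x,y)\le\langle\nabla\phi(x)-\nabla\phi(y),x-y\rangle=2\,(w^\star(c)-w^\star(2\hat c-c))^\top(\hat c-c).
\]
This is bounded by $2\|\hat c-c\|\cdot\|w^\star(c)-w^\star(2\hat c-c)\|$, which I would then control by \cref{lem:dir-stability} together with \cref{lem:QG}.

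\textbf{Step 3: from pointwise to risk.} Taking expectations gives
\[
R_j(\hat f)\le \tfrac{2}{\rho}\,\mathbb{E}\|\hat f(x)-c\|^2=\tfrac{2}{\rho}\,\mathrm{MSE}_j(\hat f).
\]
Since $R_j(f_j^\star)\ge 0$ (SPO+ is nonnegative, being a Bregman divergence of a convex $\phi$), subtracting it only decreases the left side, which yields the claim.

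\textbf{Main obstacle: the constant.} Getting the constant $2/\rho$ with $\rho$ in the denominator is the step I expect to be hardest. The paper's radius convention gives Lipschitz constant $\rho$ for $w^\star$ on the unit sphere, so a naive computation produces something like $2\rho\|\hat c-c\|^2/\|c\|$, not $2/\rho$.

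My first attempt would use \cref{lem:QG} directly on $\phi$. Quadratic growth of the set translates, through duality between strong convexity of $S$ and smoothness of $\xi_S$, into a smoothness modulus. I would track carefully whether the intended convention makes the modulus $1/\rho$, i.e. whether $\rho$ is a curvature rather than a radius. Failing that, I would state the bound with the constant $\rho/C_{\min}$ that the argument actually delivers. I would also handle the degenerate case $2\hat c-c$ near $0$, where $\phi$ is nondifferentiable, via the subgradient form of Step 2, which only needs $\nabla\phi$ at $c\neq0$.
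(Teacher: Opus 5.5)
Your Step 1 coincides with the paper's proof, which writes $\ell^S_{\mathrm{SPO+}}(\hat c,c)=D_{\xi_S}(c-2\hat c,-c)$; this is your $D_\phi(2\hat c-c,c)$ after the reflection $u\mapsto -u$. The gap is the one you flagged yourself: Step 2 never produces the constant $2/\rho$, and Step 3 simply asserts it. This gap cannot be closed, because the inequality is false under the paper's own radius convention.

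The paper bridges this step by claiming that $\xi_S$ is $(1/\rho)$-smooth and then applying the descent lemma on the segment from $-c$ to $c-2\hat c$. Both parts of that claim fail. First, \cref{lem:dir-stability} and \cref{lem:QG} give a gradient that is $\rho$-Lipschitz on the unit sphere, so the modulus is $\rho$, not $1/\rho$. Second, $\xi_S$ is positively homogeneous, so its gradient has local modulus of order $\rho/\|u\|$. It does not exist at $u=0$ (unless $S$ is a point), and the segment crosses $u=0$ when $\hat c=0$. Your first route in Step 2 breaks for exactly this reason.

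Tracking conventions will not rescue $2/\rho$. An intersection of radius-$\rho$ balls is also an intersection of radius-$\rho'$ balls for every $\rho'\ge\rho$. So if the proposition held, it would also hold with $2/\rho'$, and letting $\rho'\to\infty$ would force $R_j(\hat f)\le R_j(f^\star_j)$ for every $\hat f$, which is absurd. Concretely, take $S=B_\rho(0)$ and $\hat c=0$. Then $\ell^S_{\mathrm{SPO+}}(0,c)=\xi_S(c)-z^\star_S(c)=2\rho\|c\|$, which exceeds $\frac{2}{\rho}\|c\|^2$ whenever $\|c\|<\rho^2$. With $c$ deterministic and $\mathcal H$ containing the constant predictors $0$ and $c$, one has $R_j(f^\star_j)=0$, and the proposition fails. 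Finally, scaling $(c,\hat c)\mapsto(tc,t\hat c)$ multiplies the loss by $t$ but $\|\hat c-c\|^2$ by $t^2$. Hence any valid constant must carry a factor $\rho/\|c\|$, or $\rho/C_{\min}$ under \cref{ass:bounded}.

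Your second route is the right one, and it proves the corrected statement. Set $u=2\hat c-c$. If $u=0$ the loss is $0$; indeed $\ell^S_{\mathrm{SPO+}}(\alpha c,c)=0$ for all $\alpha\ge 1/2$. Otherwise $-w^\star_S(u)\in\partial\phi(u)$, and
\[
D_\phi(u,c)\le 2\|\hat c-c\|\,\|w^\star_S(c)-w^\star_S(u)\|\le 2\rho\|\hat c-c\|\,\Bigl\|\frac{c}{\|c\|}-\frac{u}{\|u\|}\Bigr\|\le\frac{8\rho}{\|c\|}\|\hat c-c\|^2 .
\]
The last two steps use \cref{lem:dir-stability} and the elementary bound $\bigl\|\frac{c}{\|c\|}-\frac{u}{\|u\|}\bigr\|\le\frac{2\|c-u\|}{\|c\|}$ with $\|c-u\|=2\|\hat c-c\|$; \cref{lem:QG} is not needed. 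Taking expectations and using $R_j(f^\star_j)\ge 0$ gives $R_j(\hat f)-R_j(f^\star_j)\le\frac{8\rho}{C_{\min}}\mathrm{MSE}_j(\hat f)$.

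This $\rho/\|c\|$ scaling is sharp up to a factor of $4$. For $S=B_\rho(0)$ and $e=\hat c-c$, one has $\ell^S_{\mathrm{SPO+}}(\hat c,c)=\rho\bigl(\|c+2e\|-\|c\|-2e^\top c/\|c\|\bigr)$, which behaves like $2\rho\|e\|^2/\|c\|$ for small $e\perp c$. So state the result with this constant, and remove the unsupported $\frac{2}{\rho}$ from Step 3.
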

\begin{proof}{Proof of \cref{prop:calibration}}
The SPO+ loss can be represented as
$\ell_{\mathrm{SPO+}}(\hat{c}, c)
  = D_{\xi_S}\bigl(c - 2\hat{c}, -c\bigr).$
Under \cref{ass:strong-convex-set}, the support function~$\xi_S$ is $(1/\rho)$-smooth, so by the descent lemma,
$$
  D_{\xi_S}(x,\, y)
  \leq \frac{1}{2\rho}\,\|x - y\|^2.
$$
Substituting $x = c - 2\hat{c}$, $y = -c$ yields
$\ell_{\mathrm{SPO+}}(\hat{c},\, c)
  \leq \frac{2}{\rho}\,\|\hat{c} - c\|^2.$
Taking expectations over $(X,C)$ as in \cref{sec:fed-gain},
$$
  R_{\mathrm{SPO+}}(f)
  \leq\ \frac{2}{\rho}\mathrm{MSE}_j(f).
$$
Subtracting $R_{\mathrm{SPO+}}(f^\star_j)$ from both sides and noting that the MSE term for $f^\star_j$ is non-negative gives the result.
\end{proof}

\cref{prop:calibration} shows that when the feasible set $S$ is strongly convex, SPO+ behaves like a smooth quadratic loss in the cost-prediction error. This is a precursor for our experimental results: \emph{clients with strongly convex feasible sets benefit more from federation and are less sensitive to heterogeneity than those with polyhedral sets.}

Another takeaway is the geometric intuition: as a Bregman divergence of the support function, strong convexity and therefore smoothness of the support function implies SPO+ is bounded above by a quadratic. 
It is important to note that \cref{prop:calibration} is an excess risk bound by raw MSE bound, not excess MSE. While this is still useful as an upper bound, to get a true excess to excess bound we would need a stronger realizability assumption, or a no Bayes noise assumption on the mapping $f^{\star}(\mathcal{X})\mapsto\mathcal{C}$.

\FloatBarrier
\subsection{Additional Figures \& Tables}
\label{appsubsec:additional-figures}

In \cref{tab:aggregate-performance-app} we present detailed results on each method. We also include an Oracle method here for the interpolation methods, to quantify the validation error and statistical loss introduced by Interp-DFFL. To do so, we generate an additional 2000 validation samples for each client, and do not perform a data split. That way clients do not suffer a lower sample size or a significant validation to test transfer mismatch. The results are also shown in boxplots in \cref{fig:empirical-oracle}. Overall the gain is apparent but modest. Finally, in \cref{tab:data-rich-scarce} we have detailed comparisons between data rich and data scarce clients in the synthetic experiments across method and experimental domain. 

\begin{figure}
    \centering
    \includegraphics[width=0.5\linewidth]{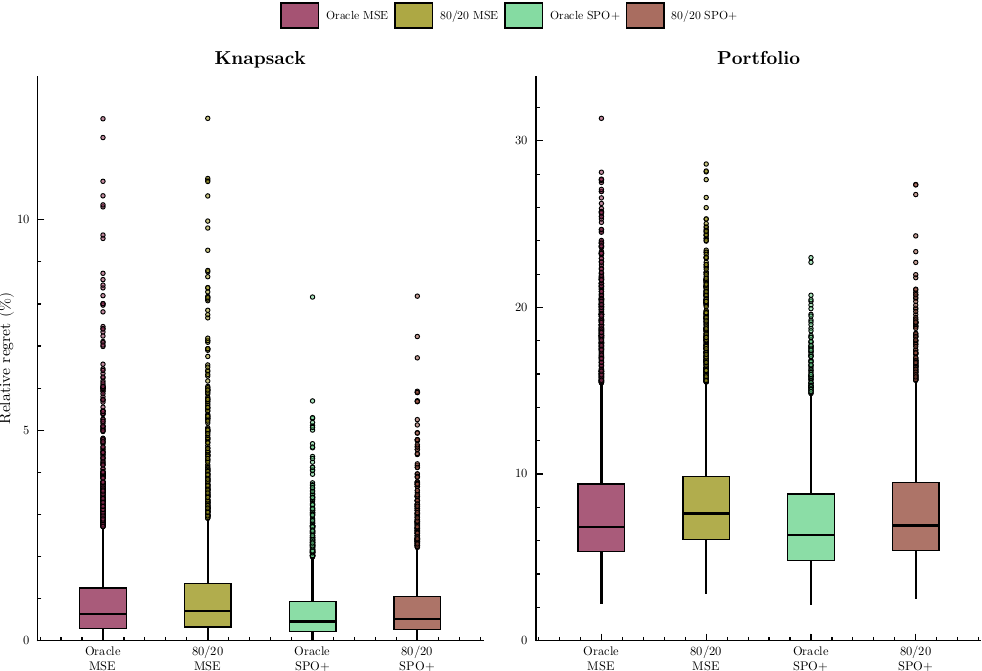}
    \caption{The impact of validation error and smaller sample sizes on the InterpDFFL and InterpMSE methods. }
    \label{fig:empirical-oracle}
\end{figure}

\begin{table}[t]
\centering
\small
\caption{Aggregate relative regret (\%) summary by experiment and method. Statistics are averaged over seeds and reported with inter-seed standard deviation. "E" denotes the experiment, where K: knapsack and P: portfolio. Lower is better $\downarrow$.}
\label{tab:aggregate-performance-app}
\begin{tabular}{lllllll}
\toprule
E & method & mean & median & p75 & p90 & max \\
\midrule
K & Local                     & $1.21 \pm 0.15$ & $1.10 \pm 0.14$ & $1.73 \pm 0.20$ & $2.41 \pm 0.31$ & $6.34 \pm 1.45$ \\
K & Federated                 & $1.37 \pm 0.18$ & $0.65 \pm 0.15$ & $1.83 \pm 0.31$ & $3.73 \pm 0.52$ & $11.70 \pm 0.83$ \\
K & Interp-DFFL(MSE)          & $1.10 \pm 0.14$ & $0.71 \pm 0.10$ & $1.37 \pm 0.16$ & $2.48 \pm 0.30$ & $10.36 \pm 2.04$ \\
K & Interp-DFFL(SPO+)         & $0.77 \pm 0.12$ & $0.51 \pm 0.08$ & $1.04 \pm 0.16$ & $1.78 \pm 0.29$ & $6.31 \pm 1.37$ \\
K & Oracle Interp-DFFL(MSE)   & $1.02 \pm 0.13$ & $0.64 \pm 0.11$ & $1.25 \pm 0.18$ & $2.35 \pm 0.29$ & $10.28 \pm 2.46$ \\
K & Oracle Interp-DFFL(SPO+)  & $0.68 \pm 0.10$ & $0.46 \pm 0.08$ & $0.93 \pm 0.15$ & $1.53 \pm 0.25$ & $5.48 \pm 1.60$ \\
P & Local                     & $11.54 \pm 0.49$ & $11.29 \pm 0.23$ & $13.84 \pm 0.51$ & $17.19 \pm 1.54$ & $33.57 \pm 3.01$ \\
P & Federated                 & $9.84 \pm 0.32$ & $7.02 \pm 0.37$ & $10.76 \pm 0.45$ & $22.00 \pm 0.96$ & $31.29 \pm 2.88$ \\
P & Interp-DFFL(MSE)          & $8.89 \pm 0.18$ & $7.65 \pm 0.32$ & $9.87 \pm 0.25$ & $15.72 \pm 0.61$ & $25.81 \pm 2.49$ \\
P & Interp-DFFL(SPO+)         & $7.84 \pm 0.26$ & $6.86 \pm 0.31$ & $9.46 \pm 0.27$ & $12.69 \pm 0.55$ & $21.78 \pm 3.39$ \\
P & Oracle Interp-DFFL(MSE)   & $8.40 \pm 0.11$ & $6.85 \pm 0.29$ & $9.29 \pm 0.34$ & $15.83 \pm 0.26$ & $27.97 \pm 1.97$ \\
P & Oracle Interp-DFFL(SPO+)  & $7.13 \pm 0.23$ & $6.33 \pm 0.26$ & $8.76 \pm 0.26$ & $11.69 \pm 0.46$ & $20.33 \pm 2.53$ \\
\bottomrule
\end{tabular}
\end{table}

\begin{table}[t]
\centering
\scriptsize
\caption{Data-rich and data-scarce clients under the imbalanced training regime. Statistics are averaged over seeds and reported with inter-seed standard deviation.}
\label{tab:data-rich-scarce}
\begin{tabular}{llllllll}
\toprule
domain & client group & method & mean regret (\%) & median regret (\%) & mean $\lambda$ & median $\lambda$ \\
\midrule
Knapsack & High-sample clients & Local & $0.45 \pm 0.06$ & $0.42 \pm 0.04$ & -- & -- \\
Knapsack & High-sample clients & Federated & $1.29 \pm 0.23$ & $0.52 \pm 0.07$ & -- & -- \\
Knapsack & High-sample clients & Interp-DFFL(MSE) & $0.88 \pm 0.13$ & $0.46 \pm 0.09$ & $0.86 \pm 0.06$ & $0.95 \pm 0.05$ \\
Knapsack & High-sample clients & Interp-DFFL(SPO+) & $0.38 \pm 0.04$ & $0.34 \pm 0.05$ & $0.43 \pm 0.03$ & $0.41 \pm 0.04$ \\
Knapsack & Low-sample clients & Local & $1.95 \pm 0.41$ & $1.89 \pm 0.31$ & -- & -- \\
Knapsack & Low-sample clients & Federated & $1.46 \pm 0.37$ & $0.64 \pm 0.30$ & -- & -- \\
Knapsack & Low-sample clients & Interp-DFFL(MSE) & $1.22 \pm 0.30$ & $0.90 \pm 0.23$ & $0.72 \pm 0.05$ & $0.78 \pm 0.06$ \\
Knapsack & Low-sample clients & Interp-DFFL(SPO+) & $0.99 \pm 0.28$ & $0.62 \pm 0.23$ & $0.67 \pm 0.07$ & $0.78 \pm 0.08$ \\
Portfolio & High-sample clients & Local & $6.09 \pm 0.13$ & $5.95 \pm 0.09$ & -- & -- \\
Portfolio & High-sample clients & Federated & $8.81 \pm 0.51$ & $5.14 \pm 0.23$ & -- & -- \\
Portfolio & High-sample clients & Interp-DFFL(MSE) & $6.32 \pm 0.33$ & $5.91 \pm 0.19$ & $0.73 \pm 0.06$ & $0.83 \pm 0.08$ \\
Portfolio & High-sample clients & Interp-DFFL(SPO+) & $5.51 \pm 0.15$ & $5.28 \pm 0.19$ & $0.67 \pm 0.03$ & $0.72 \pm 0.04$ \\
Portfolio & Low-sample clients & Local & $16.46 \pm 1.47$ & $15.49 \pm 1.41$ & -- & -- \\
Portfolio & Low-sample clients & Federated & $9.88 \pm 0.59$ & $5.98 \pm 0.54$ & -- & -- \\
Portfolio & Low-sample clients & Interp-DFFL(MSE) & $9.39 \pm 0.59$ & $8.10 \pm 0.59$ & $0.84 \pm 0.06$ & $0.88 \pm 0.08$ \\
Portfolio & Low-sample clients & Interp-DFFL(SPO+) & $8.43 \pm 0.39$ & $6.84 \pm 0.45$ & $0.89 \pm 0.02$ & $0.93 \pm 0.03$ \\
\bottomrule
\end{tabular}
\end{table}

In \cref{fig:knapsack-slope-grid-homo,fig:portfolio-slope-grid-homo} (homogeneous data) and \cref{fig:knapsack-slope-grid-hetero,fig:portfolio-slope-grid-hetero} (heterogeneous data), each slope line corresponds to one client, and their relative regret when going from a federated model (left) to a local model (right). If the line slopes upwards, it means the client prefers a federated model in that configuration. In \cref{fig:knapsack-slope-grid-hetero,fig:portfolio-slope-grid-hetero} specifically, we also differentiate clients based on their data sample size; orange slope lines represent data scarce-clients, and blue slope lines represent data-rich clients.

\begin{figure}[t]
    \centering
    \includegraphics[width=\textwidth]{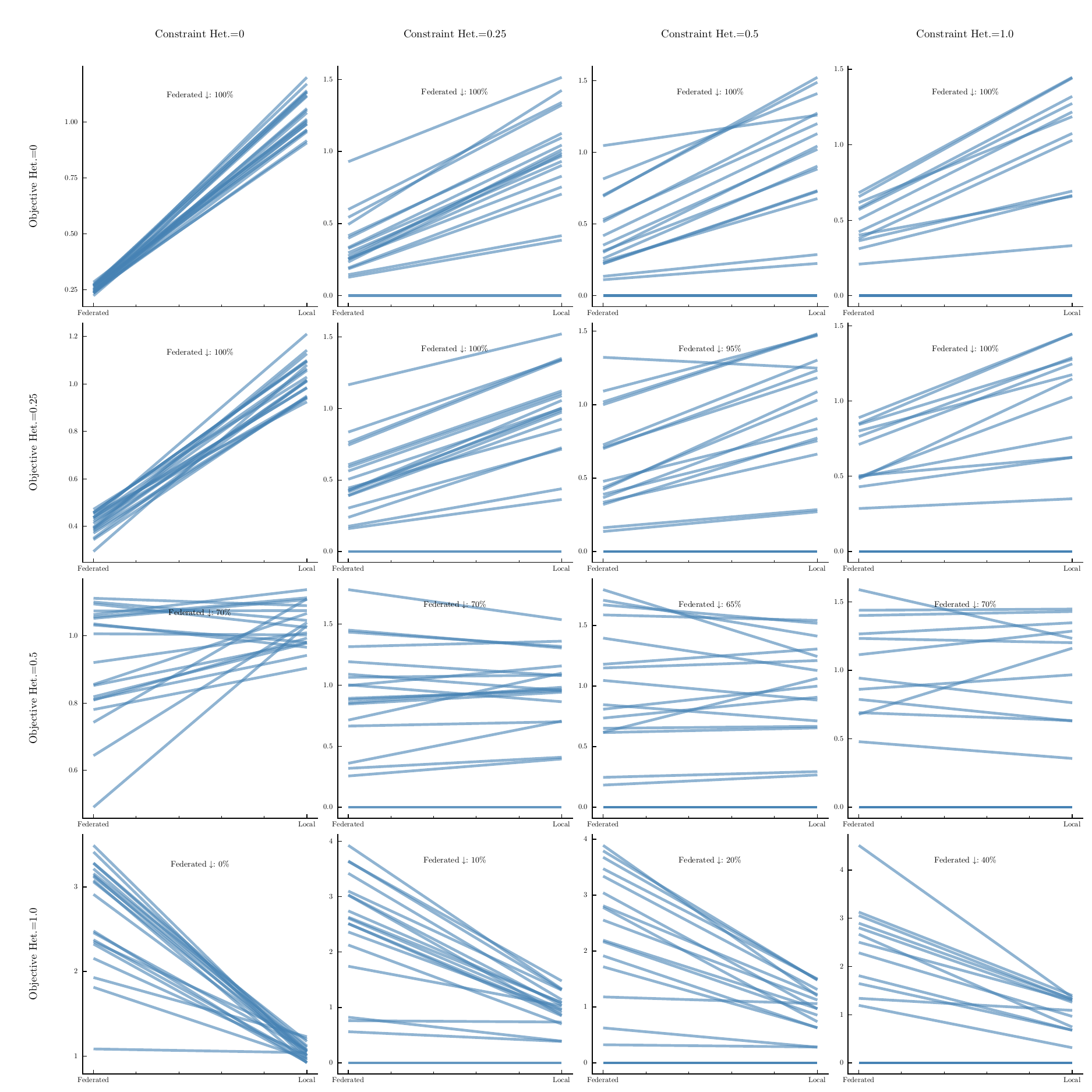}
    \caption{Local versus global regret per client in the \textbf{homogeneous data} setting for the \textbf{fractional knapsack} problem.}
    \label{fig:knapsack-slope-grid-homo}
\end{figure}
\begin{figure}[t]
    \centering
    \includegraphics[width=\textwidth]{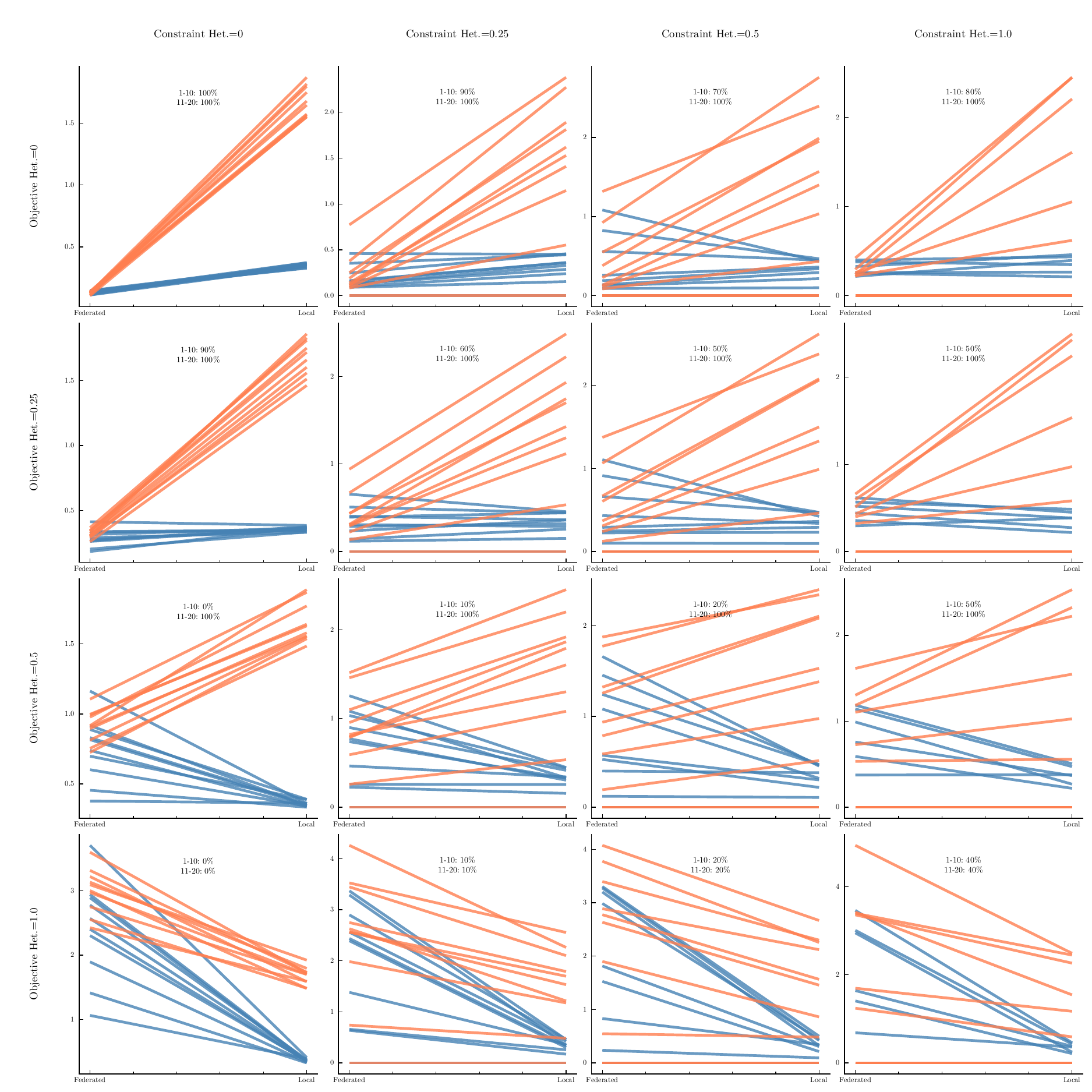}
    \caption{Local versus global regret per client in the \textbf{heterogeneous data} setting for the \textbf{fractional knapsack} problem for data-rich (blue) versus data-scarce clients (orange).}
    \label{fig:knapsack-slope-grid-hetero}
\end{figure}

\begin{figure}[t]
    \centering
    \includegraphics[width=\textwidth]{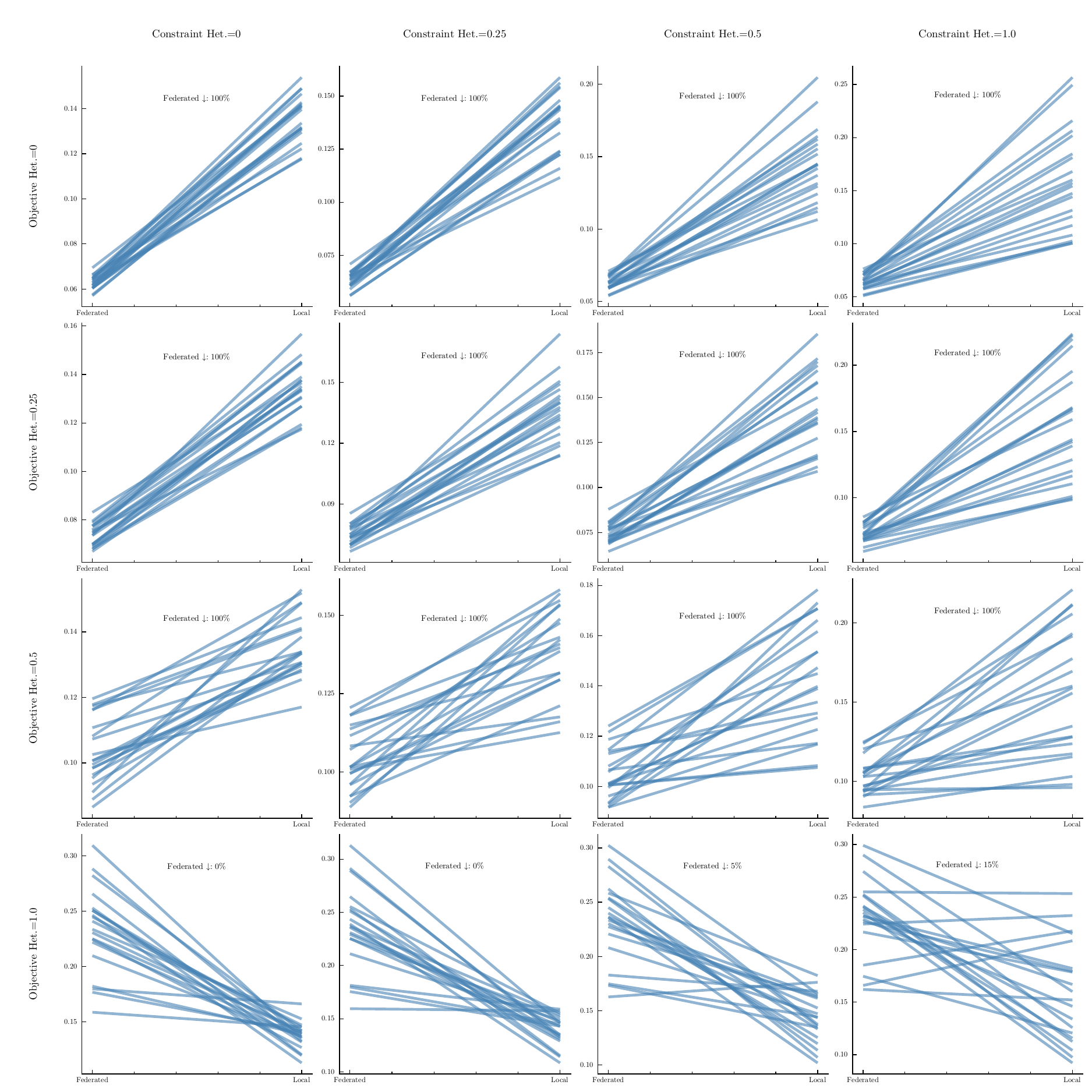}
    \caption{Local versus global regret per client in the \textbf{homogeneous data} setting for the \textbf{entropy portfolio} problem.}
    \label{fig:portfolio-slope-grid-homo}
\end{figure}
\begin{figure}[t]
    \centering
    \includegraphics[width=\textwidth]{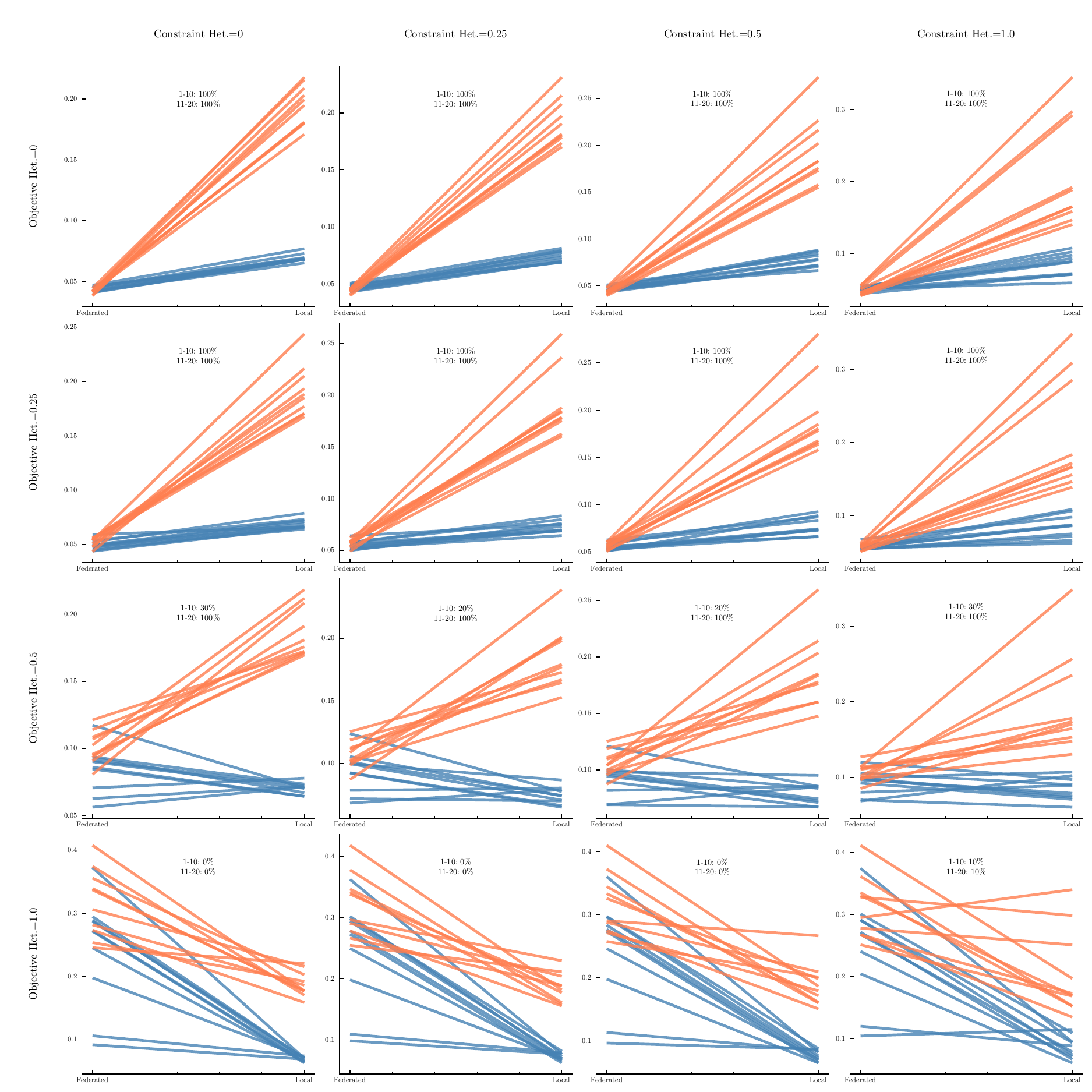}
    \caption{Local versus global regret per client in the \textbf{heterogeneous data} setting for the \textbf{entropy portfolio} problem for data-rich (blue) versus data-scarce clients (orange). }
    \label{fig:portfolio-slope-grid-hetero}
\end{figure}

\end{document}